
\documentclass[a4paper,english,final]{amsart}

\usepackage{etex}
\usepackage[utf8]{inputenc}

\usepackage{multicol}

\usepackage{graphicx}
\usepackage{amsmath}
\usepackage{amssymb}
\usepackage{mathrsfs}
\usepackage{lmodern}
\usepackage{fontenc}[T1]
\usepackage{enumerate}
\usepackage{enumitem}
\usepackage[english]{babel}
\usepackage{ifthen}
\usepackage{oubraces}
\usepackage{nicefrac}
\usepackage{mathdots}
\usepackage{tcolorbox}

\usepackage{multirow}
\usepackage{xspace}
\usepackage{verbatim}

\newenvironment{me}{\noindent\color{red}}{}
\newenvironment{vd}{\noindent\color{blue}}{}

\makeatletter
\providecommand{\leadsfrom}{%
  \mathrel{\mathpalette\reflect@squig\relax}%
}
\newcommand{\reflect@squig}[2]{%
  \reflectbox{$\m@th#1\leadsto$}%
}
\makeatother

\usepackage{pdfsync}
\DeclareMathOperator{\Aut}{Aut}
\DeclareMathOperator{\End}{End}

\usepackage{tikz}
\usetikzlibrary{shapes}
\usetikzlibrary{arrows,decorations.markings}
\usepackage{pgf}
\usetikzlibrary{automata}
\usetikzlibrary{chains,decorations.pathmorphing,positioning,fit}
\usetikzlibrary{decorations.shapes,calc,backgrounds}
\usetikzlibrary{decorations.text,matrix}
\usetikzlibrary{arrows,shapes.geometric,shapes.symbols,scopes}
\usetikzlibrary{decorations.markings}

\colorlet{FillRed}{red!50}
\definecolor{blue}{rgb}{0.211,0.211,1.0} 
\colorlet{FillBrown}{gray!40}

\definecolor{darkgreen}{rgb}{0.0,0.7,0.0}



\newtheorem{theorem}{Theorem}[section]
\newtheorem{proposition}[theorem]{Proposition}
\newtheorem{lemma}[theorem]{Lemma}
\newtheorem{corollary}[theorem]{Corollary}
\newtheorem{example}[theorem]{Example}
\newtheorem{remark}[theorem]{Remark}
\newtheorem{definition}[theorem]{Definition}

\usepackage{prettyref}
\newcommand{\prref}[1]{\prettyref{#1}}
\newif{\ifshort}\shorttrue
\shorttrue
\ifshort
\newrefformat{alg}{Alg.~\ref{#1}}
 \newrefformat{thm}{Thm.~\ref{#1}}
 \newrefformat{lem}{Lem.~\ref{#1}}
 \newrefformat{def}{Def.~\ref{#1}}
 \newrefformat{cor}{Cor.~\ref{#1}} 
 \newrefformat{prop}{Prop.~\ref{#1}}
 \newrefformat{sec}{Sect.~\ref{#1}}
  \newrefformat{subsec}{Subsect.~\ref{#1}}
 \newrefformat{kap}{Chap.~\ref{#1}}
 \newrefformat{ex}{Ex.~\ref{#1}}
 \newrefformat{app}{App.~\ref{#1}}
 \newrefformat{alg}{Alg.~\ref{#1}}
 \newrefformat{eq}{(\ref{#1})}%
 \newrefformat{equiv}{(\ref{#1})}
 \newrefformat{tab}{Tab.~\ref{#1}}
 \newrefformat{fig}{Fig.~\ref{#1}}
 \newrefformat{rem}{Rem.~\ref{#1}}
 \newrefformat{ap}{{\sc Appendix}}

\else
\newrefformat{alg}{Algorithm~\ref{#1}}
\newrefformat{thm}{Theorem~\ref{#1}}
\newrefformat{lem}{Lemma~\ref{#1}}
\newrefformat{def}{Definition~\ref{#1}}
\newrefformat{cor}{Corollary~\ref{#1}}
\newrefformat{prop}{Proposition~\ref{#1}}
\newrefformat{sec}{Section~\ref{#1}}
\newrefformat{subsec}{Subsection~\ref{#1}}
\newrefformat{kap}{Chapter~\ref{#1}}
\newrefformat{ex}{Example~\ref{#1}}
\newrefformat{rem}{Remark~\ref{#1}}
\newrefformat{fig}{Figure~\ref{#1}}
\newrefformat{app}{Appendix~\ref{#1}}
\newrefformat{eq}{Equation~(\ref{#1})}
\newrefformat{equiv}{Equivalence~(\ref{#1})}
\newrefformat{tab}{Table~\ref{#1}}
\newrefformat{ap}{{\sc Appendix}}
\fi
\newcommand\EE{E}

\newcommand{\subauto}{subautomaton\xspace}
\newcommand{\subauta}{subautomata\xspace} 

\newcommand{\esolu}{entire solution\xspace}
\newcommand{\fopro}{forward property\xspace}

\newcommand{\FOTh}{\mathrm{FOTh}}



\newcommand{\Bnew}{\ensuremath{B_{\mathrm{new}}}}

\newcommand{\Bold}{\ensuremath{B_{\mathrm{old}}}}

\newcommand{\Xnew}{\ensuremath{\cX_{\mathrm{new}}}}

\newcommand{\wrt}{with respect to\xspace}
\newcommand{\ie}{i.e.\xspace}
\newcommand{\Ip}{In particular,\xspace}
\newcommand{\ip}{in particular,\xspace}
\newcommand{\comp}{compression\xspace}
\newcommand{\subst}{substitution\xspace}

\newcommand{\exe}{extended equation\xspace}
\newcommand{\exes}{extended equations\xspace}
\newcommand{\equ}{equation\xspace}
\newcommand{\equs}{equations\xspace}

\newcommand{\twequs}{twisted equations\xspace}

\newcommand{\solu}{solution\xspace}
\newcommand{\invol}{involution\xspace}
\newcommand{\fg}{f.g.~}

\newcommand{\delper}{$\del$-periodic\xspace}
\newcommand{\cgood}{good\xspace} 

\newcommand{\Psiben}{\Psi_{\text{Ben}}}

\newcommand{\Psimon}{\Psi_{\text{mon}}}
\newcommand{\Psimonnu}{\Psi_{\text{mon},\nu}}

\newcommand{\edtol}{EDT0L\xspace} 
\newcommand{\tra}{transition\xspace} 
\newcommand{\tras}{transitions\xspace}

\newcommand{\IFF}{if and only if\xspace}
\renewcommand{\hom}{homomorphism\xspace}
\newcommand{\homs}{homomorphisms\xspace}
\newcommand{\iso}{iso\-momor\-phism\xspace}

\newcommand{\Endo}{endomorphism\xspace}
\newcommand{\Endos}{endomorphisms\xspace}

\newcommand{\morph}{morphism\xspace}
\newcommand{\Hmorph}{compatible morphism\xspace}

\newcommand{\morphs}{morphisms\xspace}
\newcommand{\auto}{automorphism\xspace}
\newcommand{\autos}{automorphisms\xspace}

\newcommand{\velo}{very long\xspace}
\newcommand{\llong}{long\xspace}
\newcommand{\lds}{, \ldots ,}

\newcommand{\ra}{\longrightarrow}

\usepackage[framemethod=tikz]{mdframed} 

\newcommand{\slz}{\SL(2,\Z)}
\newcommand{\SLZ}{$\SL(2,\Z)$\xspace}

\newcommand{\SG}{\mathop{\text{SG}}}
\newcommand{\ST}{\mathop{\text{ST}}}

\newcommand{\HN}{$H$-$N$-monoid\xspace}

\newcommand{\HNs}{$H$-$N$-monoids\xspace}
\newcommand{\HNalp}{$H$-$N$-alphabet\xspace}
\newcommand{\HNalps}{$H$-$N$-alphabets\xspace}


\newcommand{\init}{\mathrm{init}}

\newcommand{\Winit}{W_{\mathrm{init}}}
\newcommand{\Einit}{E_{\mathrm{init}}}
\newcommand{\Efin}{E_{\mathrm{fin}}}
\newcommand{\alpfin}{\alp_{\mathrm{fin}}}
\newcommand{\Wfin}{W_{\mathrm{fin}}}

\newcommand{\arc}[1]{\overset{#1}\ra}
\newcommand{\darc}{\leftrightarrow}

\newcommand{\set}[2]{\left\{#1\mathrel{\left|\vphantom{#1}\vphantom{#2}\right.}#2\right\}}

\newcommand{\oneset}[1]{\left\{\mathinner{#1}\right\}}
\newcommand{\os}{\oneset}
\newcommand{\sm}{\setminus}
\newcommand{\es}{\emptyset}
\newcommand{\sse}{\subseteq}
\newcommand{\ssneq}{\varsubsetneq}

\newcommand{\smallset}[1]{\{\mathinner{#1}\}}


\newcommand{\vdmatrix}[4]{\left(\begin{smallmatrix}#1 & #2\\ #3 & #4\end{smallmatrix}\right)}


\newcommand{\abs}[1]{\left|\mathinner{#1}\right|}
\newcommand{\Abs}[1]{\left\Vert\mathinner{#1}\right\Vert}
\newcommand{\Abseq}[1]{\left\Vert\mathinner{#1}\right\Vert_{\text{eq}}}
\newcommand{\Absrat}[1]{\left\Vert\mathinner{#1}\right\Vert_{\text{rat}}}
\newcommand{\Absin}[1]{\left\Vert\mathinner{#1}\right\Vert_{\text{in}}}

\newcommand{\Absbin}[1]{\left\Vert\mathinner{#1}\right\Vert_{\text{bin}}} 
\newcommand{\Absone}[1]{\left\Vert\mathinner{#1}\right\Vert_{1}} 

\newcommand{\floor}[1]{\left\lfloor\mathinner{#1} \right\rfloor}

\newcommand{\gen}[1]{\left< \mathinner{#1} \right>}

\newcommand{\N}{\ensuremath{\mathbb{N}}}
\newcommand{\Z}{\ensuremath{\mathbb{Z}}}
\newcommand{\Q}{\ensuremath{\mathbb{Q}}}
\newcommand{\R}{\ensuremath{\mathbb{R}}}

\newcommand{\B}{\ensuremath{\mathbb{B}}}
\newcommand{\F}{\ensuremath{\mathbb{F}}}


\newcommand{\PSPACE}{\ensuremath{\mathsf{PSPACE}}}

\newcommand{\NP}{\ensuremath{\mathsf{NP}}}
\renewcommand{\P}{\ensuremath{\mathsf{P}}}
\newcommand{\NSPACE}{\ensuremath{\mathsf{NSPACE}}}


\newcommand{\Bn}{\B^{n\times n}}




\renewcommand{\phi}{\varphi}
\newcommand{\eps}{\varepsilon}

\newcommand{\alp}{\alpha}
\newcommand{\bet}{\beta}
\newcommand{\gam}{\gamma}
\newcommand{\del}{\delta}
\newcommand{\lam}{\lambda}
\newcommand{\sig}{\sigma}
\newcommand{\opphi}{\phi^{\text{op}}}

\newcommand{\Sig}{\Sigma}
\newcommand{\Gam}{\GG}
\newcommand{\Del}{\Delta}

\newcommand\GG{\Gamma}

\newcommand\Lam{\Lambda}
\newcommand\OO{\Omega}

\newcommand\stnf{\mathop\mathrm{snf}} 
\newcommand\rednf\stnf

\newcommand\SL{\mathop\mathrm{SL}} 
\newcommand\PSL{\mathop\mathrm{PSL}}


\newcommand{\Oh}{\mathcal{O}}

\newcommand{\wt}[1]{\widetilde{ #1 }}
\newcommand{\wh}[1]{\widehat{ #1 }}

\newcommand{\id}[1]{\mathrm{id}_{#1}}
\newcommand{\pr}[1]{\mathrm{pr}_{#1}}
\newcommand{\cA}{\mathcal{A}}
\newcommand{\cAcS}{\mathcal{A}_{\cS}}
\newcommand{\cB}{\mathcal{B}}

\newcommand{\cF}{\mathcal{F}}

\newcommand{\cG}{\mathcal{G}}

\newcommand{\cP}{\mathcal{P}}
\newcommand{\cS}{\mathcal{S}}
\newcommand{\cSPhi}{\cS_\Phi}
\newcommand{\cStri}{\cS_{\text{tri}}}
\newcommand{\cSfin}{\cS_{\text{fin}}}
\newcommand{\cSfinnu}{\cS_{\text{fin},\nu}}
\newcommand{\cT}{\mathcal{T}}
\newcommand{\cX}{\mathcal{X}}
\newcommand{\cY}{\mathcal{Y}}

\newcommand{\cR}{\mathcal{R}}
\newcommand{\cV}{\mathcal{V}}

\newcommand{\cSol}{\mathrm{Sol}}

\newcommand{\ov}[1]{\overline{#1}}

\newcommand{\oi}[1]{{#1}^{-1}}





\newcommand{\Rat}{\mathrm{Rat}}


\newcommand\RAS[2]{\overset{#1}{\underset{#2}{\Rightarrow}}}
\newcommand\LAS[2]{\overset{#1}{\underset{#2}{\Leftarrow}}}


\hyphenation{ho-mo-morph-ism}


\newcommand\finF{\mathscr F} 
\newcommand\inI{\mathscr I} 

\renewcommand{\leq}{\leqslant}
\renewcommand{\geq}{\geqslant}

\begin{document}
\iftrue
\title[Solutions to twisted word equations]{Solutions to twisted word equations and equations in virtually free groups}

\author{Volker Diekert}

\address{Institut f\"ur Formale Methoden der Informatik,  Universit\"at Stuttgart,
  Universit\"atsstr. 38, D-70569 Stuttgart, Germany}
\email{ diekert@fmi.uni-stuttgart.de} 

\author{Murray Elder}

\address{School of Mathematical and Physical Sciences, University of Technology Sydney,
Broadway NSW 2007, Australia}
\email{murray.elder@uts.edu.au}

\date{\today}
\thanks{Research supported by Australian Research Council (ARC) Project DP 160100486 and German Research Foundation (DFG) Project DI 435/7-1.}

\maketitle

\begin{abstract}
It is well known that the problem solving equations in virtually free groups can be reduced to the problem of solving twisted word equations with regular constraints over free monoids with involution. 
In this paper we prove that the set of all solutions  of  a twisted word equation is an 
 EDT0L language  whose specification can be computed in $\PSPACE$.
 Within the same complexity bound we can decide whether the solution set is empty, finite, or infinite.

In the second part of the paper we apply the results for twisted equations to obtain in $\PSPACE$ an EDT0L description of the solution set of equations with rational constraints for finitely generated virtually free groups  in standard normal forms with respect to a natural  set of generators.
 If the rational constraints are given by a \hom into a fixed (or ``small enough'') finite monoid, then our  
algorithms can be implemented in {$\NSPACE(n^2\log n)$, that is, in quasi-quadratic nondeterministic space.}

Our results 
generalize the work by  Lohrey and S\'enizergues (ICALP 2006) and Dahmani and Guirardel (J. of Topology 2010) with respect to both complexity and  expressive power. 
Neither paper gave any concrete complexity bound and the results in these papers are stated for subsets of solutions only, whereas our results concern all solutions.

	\bigskip
	
	\noindent 2010 Mathematics Subject Classification: 03D05, 20F65, 20F70, 68Q25, 68Q45.

	\noindent \textit{Keywords:}
	Equation in a virtually free group, twisted equation, EDT0L language, \PSPACE.

\end{abstract}

\section*{Introduction}\label{sec:intro}
For a given semigroup $S$ the decision problem \emph{\sc WordEquation} is the following:
 on input two words $U$ and $V$ in variables together with letters from a generating set $\Sig\sse S$, decide whether or not there exists a substitution $\sig$ of  variables by elements in $S$ which yields a true identity $\sig(U)= \sig(V)$ in $S$. Here, $\sig$ is extended by $\sig(s) = s$ for all $s\in \Sig$. 

In a seminal paper \cite{mak77} Makanin showed  that {\sc WordEquation} is decidable for  free semigroups.
The first complexity estimation of the problem was a tower of several exponential functions, but this dropped down to \PSPACE\ by Plandowski \cite{pla04jacm}
using compression. The insight that long solutions of word equations can be efficiently compressed is due to \cite{pr98icalp} which also led to the still standing
 conjecture that {\sc WordEquation} is $\NP$-complete for free semigroups (and free groups). Until 2013 the known decidability proofs for solving word equations were long and technical with an accompanied 
 reputation for being difficult. This 
 changed drastically when Je\.z applied his \emph{recompression} technique: he presented an 
$\NSPACE(n \log n)$ algorithm to solve word equations \cite{jez16jacm}\footnote{In \cite{jez17icalp} Je\.z improved the complexity to $\NSPACE(n)$.}. Actually his method achieves more: it describes all solutions, copes with rational constraints (which is essential in applications), it extends to free groups, and to free monoids with \invol \cite{DiekertJP16}. 
Refining Je\.z's method, Ciobanu and the present authors showed 
that the full solution set of a given word equation over a free monoid with \invol with rational constraints is  EDT0L  \cite{CiobanuDiekertElder2016ijac}. As a consequence, the same is true in free monoids without \invol and for free groups where the constraints are used to ensure that \solu{s} are given by reduced words.
Previously this was only known for quadratic word equations \cite{FerteMarinSenizerguesTocs14}.
EDT0L languages are defined by a certain type of Lindenmayer system. There is a vast literature on Lindenmayer systems, see \cite{RozS86},  but all we need here is that an EDT0L language is specified by a nondeterministic finite automaton accepting  \Endos over a free monoid and by some initial word. Applying the set of 
accepted  \Endos to the initial word yields the language. 

The original motivation for \cite{CiobanuDiekertElder2016ijac} was to prove that the
full solution set in reduced words of equations in free groups is an indexed language, a problem which was open at that time \cite{GilPC12,JainMS2012Lics}. However, the result in \cite{CiobanuDiekertElder2016ijac}  is stronger 
since  EDT0L forms a strict subclass of indexed languages \cite{EhrRoz77}.

Transfer results as in \cite{gut2000stoc,CiobanuDiekertElder2016ijac} from words to free groups have a long history. In the 1980s Makanin showed that the existential and positive theories of free groups  are decidable \cite{mak84}. In 1987 Razborov gave a description of all solutions for an equation in a free group via ``Makanin-Razborov'' diagrams \cite{raz87,raz93} which formed a cornerstone in the  work of 
Kharlampovich/Myasnikov \cite{KMIV06} and Sela \cite{sela13}
on the positive solution of Tarski's conjectures about the elementary theory in free groups.

The motivation for the present paper is along this line. We show that 
given a finitely generated virtually free group $G$ there is an $\NSPACE(n^2\log n)$ algorithm which produces for a given equation with (small) rational constraints an effective description of an \edtol language which describes the solution set in standard normal forms over a natural set of generators. 
Moreover, the same complexity is enough to decide whether the \solu set is empty, finite or infinite.
No $\PSPACE$ algorithm, in fact no concrete complexity bound was known for deciding emptiness before.

In this paper, we define an 
 $\NSPACE(s(n))$ algorithm 
\label{nspace-explanation}
{to be a partially defined {single-valued} function $f$ computed by a nondeterministic Turing machine consisting of three tapes: 
a one-way-read-only  input tape, a two-way-read-write work tape, and a one-way-write-only  output tape. If the length of the  word written on the input tape is $n$,  the work tape is restricted to having length $s(n)$.
If the machine halts on input $w$ at some point, then the contents $w'$ on the output tape satisfies $w'=f(w)$. In general such a device might specify a partially defined \emph{multi-valued} function, where several outputs are possible from the same input. However, in our case, we require that the output is unique. 
The domain of the partially defined function $f$ computed by the machine is the halting set of the machine, and for each $w$ in the domain there is a single output $f(w)$.  
This is the standard definition of a nondeterministic transducer which computes partially defined single-valued function. For nondeterministic 
polynomial time, formal definitions go back to \cite{BookLS84}; see also \cite{Selman94,Selman1996}.
It is clear that this formalism applies to other nondeterministic complexity classes as well. Every $\NSPACE(s(n))$ transducer} can be simulated by a deterministic transducer using at most working space $s(n)^2$ (Savitch's Theorem), and also by a deterministic Turing machine which uses a time bound in $2^{\Oh(s(n))}$, 
see \cite{pap94} for more details. Thus, every  $\PSPACE$ algorithm can be implemented such that it runs in deterministic singly exponential time  $2^{\text{poly}(n)}$.

Several remarks are in order here, which point to some additional difficulties in our framework.
First, in general virtually free groups have torsion, which is a serious obstacle to applying the known techniques. The reason to study virtually free groups is motivated by  the
ubiquitous presence of word hyperbolic groups \cite{gro87}. Solving equations in torsion-free hyperbolic groups reduces to solving equations in free groups \cite{rs95}, but solving equations in word hyperbolic groups with torsion reduces to solving 
equations in virtually free groups which in turn reduces to solving \emph{twisted} word equations with rational constraints \cite{DahmaniGui10}.
The question of whether solving twisted word equations is decidable was asked  by Makanin (\cite{Kourovka1986} Problem 10.26(b)). It was solved in  \cite{DahmaniGui10},  thereby showing that 
the set of solvable equations over a f.g.~virtually free group is decidable. This result  was also independently shown by Lohrey and 
S{\'e}nizergues \cite{LohSen06}. (Actually,  \cite{LohSen06} proves a more general transfer result.) What is in common: both papers are based on  \cite{dgh05IC}
and use explicitly (\cite{LohSen06} only implicitly due to \cite{dgh05IC}) the  so-called ``exponent of periodicity''. 
Because of this neither  paper describes all solutions, nor gives
 any concrete complexity bounds.

The result for virtually free groups is obtained 
by a 
reduction to the problem to describe the solution sets of twisted word equations with regular constraints, following standard techniques.
So, the main new contribution is our approach to solving twisted word equations. 
We follow the approach in  \cite{CiobanuDiekertElder2016ijac} to define a sound and complete algorithm to produce an  NFA $\cA$ describing all solutions, however  in the setting of twisted equations the technical details are quite far from previous methods.
For example, for readers familiar with previous methods, in twisted equations it does not make sense to ``uncross'' pairs $ab$ where $a$, $b$ are different letters because once all pairs $ab$ are uncrossed the twisting may produce new crossing pairs $ba$, uncrossing them leads to new crossing pairs  $ab$ etc. Thus, our underlying method is quite  different from the original recompression due to Je\.z.  

The class of  f.g.~virtually free groups
appears in many different ways. 
For example, a fundamental theorem of Muller and Schupp (relying originally on  \cite{Dunwoody85}) says that a f.g.~group is virtually free \IFF it is \emph{context-free} \cite{ms83}.
This means that, given any set of monoid generators $A$,   
the set of words $w\in A^*$ which represent $1\in V$ forms a context-free language.  
Other characterizations include:
(1) fundamental groups of finite graphs of finite groups \cite{Karrass73},  
(2) f.g.~groups having a Cayley graph with finite treewidth \cite{KuskeL05},
(3) universal groups of finite pregroups \cite{Rimlinger87a}, 
(4) groups having a finite presentation by some geodesic string rewriting system \cite{GilHHR07}, and
(5) f.g.~groups having a Cayley graph with decidable monadic second-order theory \cite{KuskeL05}.
Proofs for the most important equivalences are in \cite{DiekertW17crm}. The transformations are effective. For example, starting from a context-free grammar for the word problem, we can construct a representation as a fundamental group of finite graphs of finite groups. However, the finite graphs of finite groups can be much larger than the size of the context-free grammar: the result in \cite{sen96dimacs} showed a primitive recursive bound on the blow-up. It was only very recently that S\'enizergues and Wei\ss{} showed in \cite{SenizerguesW18} that the blow-up can be bounded by a doubly exponential function. 

What we use here is another characterization which is proved in \cite[Sec.~2.4.5]{DiekertW17crm}. It follows rather easily from Bass-Serre theory \cite{serre80} and the 
representation of a f.g.~virtually free group as a fundamental group of finite graphs of finite groups. The characterization says that a f.g.~group $G$ is virtually free \IFF it has an effective embedding 
into a semi-direct product of a free group $F$ with basis $E_+$ by a finite group $H$ which acts by permutations on the symmetric set $E= E_+\cup \oi E_+$. 
(The precise statement is in \prref{prop:DW17}.) Taking this characterization as a black box, no knowledge in Bass-Serre theory is required to understand our results. 
 
An extended abstract of a {preliminary version of} this paper was presented at the conference ICALP 2017, Warsaw (Poland), 10-14 July 2017 \cite{DiekertE17icalp}.  
Ciobanu and the second author have now extended the results of the present paper to show solutions to equations in any hyperbolic group are EDT0L with description in  
$\PSPACE$
\cite{CiobanuEicalp2019}.

\section{Organization of the paper}\label{sec:org}
\subsection{The overall structure}\label{sec:overallorg}

The paper has two main and separate parts. In a first part we deal the following algorithmic problem. The input is  
a system $\cS$ of twisted word equations with regular constraints over a free monoid (with \invol) $A^*$. The ``twist'' comes from a finite group $H$ acting on 
$A^*$. 
We present a $\PSPACE$ algorithm which constructs an NFA $\cAcS$ which gives a description of the set of all solutions as an \edtol language.
Structural properties of the NFA $\cAcS$ tell us whether the set of all solutions is empty, finite, or infinite. Precise complexity bounds are discussed in \prref{sec:mtw}, and under certain assumptions on the size of the regular constraints, we prove the entire algorithm can be done in  $\NSPACE(|H|n^2 \log |A|\log n)$  where $n$ denotes the input size of $\cS$, and when 
 $|A|$ and $|H|$ are constants, this becomes 
quasi-quadratic nondeterministic space $\NSPACE(n^2 \log n)$.

In a second part we apply our results on twisted word equations to the existential theory of equations with rational constraints over finitely generated virtually free groups. {}From the algorithmic viewpoint we deal with a non-uniform complexity where the virtually free group $G$ is not part of the input. (This allows us to assume that  $G$ is embedded into a 
semi-direct product of a free group $F$ by a finite group $H$, where 
the rank of $F$ and the size of $H$ are constants.) The input is a Boolean  formula $\Phi$ in free variables over equations and rational constraints for the \solu specified by NFAs which accepts subsets of $G$.  
The output is a specification of the set of all \solu{s} in standard normal  forms for $\Phi$ as an \edtol language. The proof is a reduction to 
the setting in the first part. The result is in the same overall complexity as in the first part when taking into account that $H$, $F$ and $A$ are not part of the input. 

In the final section 
 we perform this reduction explicitly for the special linear group $\slz$ (without relying on any knowledge of Bass-Serre theory) starting with the well-known classical fact that
 $\slz$ can be embedded in semi-direct product of a free group of rank $2$ (its commutator subgroup) by the finite cyclic group $\Z/12\Z$. 
{\it A priori}, there could be an exponential blow-up in the complexity 
due to fact that we use matrices and not a word representation when describing equations over $\slz$. However, there is no such blow-up thanks to work of Gurevich and Schupp \cite{GurevichS07}.

\subsection{Technical details}\label{sec:tecdet}
We assume that the reader is familiar with some basic facts in combinatorics on words, formal languages and finite automata, and complexity theory. Apart from that (and the promise that a finitely generated (\fg for short) 
virtually free group admits an embedding into a certain 
semi-direct product of a free group $F$ by a finite group $H$) the paper is self-contained.  In principle, it is not necessary that the reader has ever heard of Makanin,  word equations, or any method to solve them before.  
The paper uses various technical tools where the authors would have preferred to give references in the literature rather than lengthy and 
somewhat pedestrian 
constructions, but failed to find the appropriate references. 

The heart of the paper is Je\.z's compression method in the framework of twisted equations: \prref{sec:compr} and  \prref{sec:paircomp}.
The adaption to the twisted setting is far from  trivial and quite different from the original method in \cite{jez16jacm} or its extension to free groups as in \cite{DiekertJP16} or \cite{CiobanuDiekertElder2016ijac}. Therefore to understand Sections~\ref{sec:compr} and~\ref{sec:paircomp} is the most demanding part when reading the paper. 
 
 Many of the technicalities surrounding $\NSPACE$ complexity  can be overlooked if the reader is 
 happy enough
  to replace the explicit 
 complexity bounds  by $\PSPACE$. 
 

\section{Preliminaries}\label{sec:prel}
We use standard notation. 
If $A$ and $B$ are sets, then $A\sse B$ means set inclusion, while
$A\ssneq B$ means $A\sse B\wedge A\neq B$. 
By 
$A\sm B$ we denote the set of $a\in A$ which are not in $B$. 
 By $B^A$ we mean the set of mappings from $A$ to $B$, and  $2^A$ denotes the power set of $A$, that is,
$2^A=\set{B}{B\sse A}$. We also view $2^A$ as a commutative and idempotent monoid. 

$\B=(\{0,1\},\max,{\cdot},0,1),$ denotes the Boolean semi-ring, $\N$ (resp.~$\Z$) denotes the semi-ring of natural numbers, (resp.~the ring of integers). $\N$ is also the free monoid and $\Z$ is the free group in one generator. 

Monoids (resp.~groups) will  typically be denoted by $M$  and $N$ (resp.~by $G$ and $H$). 
If the focus is on finite monoids (resp.~finite groups), then we use the notation $N$ (resp.~$H$). With a few exceptions (like $\N$ or $\Z$) we denote the identity element in monoids by $1$. A \emph{zero} in a monoid $M$ is an element $0\in M$ such that
$0x=x0=0$ for all $x\in M$. If a zero exists, it is unique. Nontrivial groups cannot have a zero.

Let $M$ be a monoid and $u,v\in M$. We say that $u$ is a \emph{factor} of $v$ if we can write
$v= xuy$ for some $x,y \in M$. 
If we can write $v= uy$ (resp.~$v= xu$), then we say that $u$ is a \emph{prefix} (resp.~\emph{suffix}) of $v$. If $u$ is a prefix of $v$, then we also write $u\leq v$. 

\subsection{Complexity}\label{sec:compl}
The $\Oh$-notation and complexity classes like $\P$, $\NP$,  $\PSPACE$, $\NSPACE(s(n))$ are defined  as in standard textbooks (\cite{HU,pap94}), see also page~\pageref{nspace-explanation}. 
We  use a convention that   $\log(m) 
=\max\{1,\log_2(m)\}$.
Throughout  we use the well-known fact due to  Immerman-Szelepcs{\'e}nyi that $\NSPACE(s(n))$ is closed under complementation.
Note that any statement about complexity depends on how the input is given. A statement like ``factorization is not known to be in~$\P$'' makes sense only if 
the encoding of the problem and a notion of \emph{input size} has been defined. 
If integers are encoded in unary, then, trivially,  ``factorization is in~$\P$'' is true. 
Typically, inputs have various parameters. If certain parameters of the input are fixed, then  \wrt the input size these parameters behave as  constants. Still, for many problems $\cP$ it is more accurate to use parametrized inputs, where the input size is tuple of non-negative numbers:  $\Abs{\cP} = (p_1\lds p_k)$ 
with $k\geq 1$. If $\cP$ is such a problem, then \wrt  polynomial resource bounds we view $\cP$ as a one-parameter 
problem of input size $n=1+p_1+\cdots +p_k$. The notation is robust: every polynomial in $(1+p_1)^\ell\cdots (1+p_k)^\ell$ is also a polynomial in $n$
for all $\ell \geq 1$. 
Throughout, we take care to define our input sizes (of systems of equations, or Boolean formulae, with regular constraints) in a natural way.

\subsection{Sets and monoids with \invol}\label{sec:mi}
An \emph{involution} of a set is a bijection $x \mapsto \ov x$ such that 
$\overline{\overline{x}} = x$ for all $x$ in the set. The identity map
 is an \invol. 
A \emph{monoid with \invol} additionally has to satisfy $\overline{xy}=\overline{y}\,\overline{x}$. This implies $\ov 1= 1$ and $\ov 0= 0$ (in case there is a zero).
If $G$ is a group, then it is a monoid with \invol by taking $\ov g = \oi g$ for all $g \in G$. By default, we choose $\ov g$ to be $\oi g$ in groups. 

A \emph{\morph} between sets with involution is a mapping  respecting the involution. 
A \emph{\morph} between monoids with \invol is a \hom $\phi\colon M \to M'$ such that 
$\phi(\ov x) = \ov{\phi(x)}$. Note that every group \hom is a \morph  of monoids with involution. 
The set of \autos 
on a set (or monoid) $M$ forms the group $\Aut(M)$. 
For $\Del \sse M \cap M'$ we say that $\phi\colon M\to M'$ is 
a \emph{$\Del$-\morph} if $\phi(x) = x$ for all $x \in \Del$.

\subsection{Group actions and $H$-monoids}\label{sec:ga}
Recall that a group $H$ acts on a set $\Sig$ (with \invol)  
via a \hom $\psi\colon  H \to \Aut(\Sig)$. That is, $\psi$ defines a permutation
$x \mapsto g \cdot x$ with $1 \cdot x= x$, $f\cdot(g \cdot x) = (fg) \cdot x$ 
(and $f\cdot \ov x = \ov{f \cdot x}$) for all $f,g \in {H}$ and $x \in \Sig$. 
Thus, every $g\in {H}$ defines a permutation of 
$\Sig$ (which respects the \invol).  The \emph{stabilizer} of $x\in \Sig$ is the 
subgroup $H_x=\set{g\in H}{g\cdot x = x}$. 
Frequently, we also write $g(x)$ as a synonym for $g\cdot x$.  
If $H$ acts on a monoid  $M$, then we additionally demand that 
every element of $H$ acts as an \auto: $g(xy) =g(x)g(y)$. If $M$ is equipped with an \invol, then we have $g(\ov{xy})=  g(\ov{y}){g(\ov x)}= \ov{g(y)}\;\ov{g(x)}$. In the following we  say that $M$ is an \emph{$H$-monoid}
if it is a monoid with \invol on which $H$ acts.
A \morph between $H$-monoids $M$ and $M'$ is given by an \emph{$H$-\Hmorph}
which is  a \hom $\phi\colon M\to M'$ respecting for all $g\in H$ and $x\in M$ the 
action $g\cdot \phi(x) =\phi(g\cdot x)$, and the \invol, $\phi(\ov x)= \ov{\phi(x)}$.

\subsection{Free monoids with \invol and an $H$-action}\label{sec:fmi}
By an \emph{alphabet} we mean a finite set $\Sig$ with \invol. (Since the identity is an \invol of $\Sig$ this covers the case of  monoids without a predefined involution.) 
The elements of $\Sig$ are called 
\emph{letters} (or \emph{symbols}). By $\Sig^*$ (by $\Sig^+$ resp.) we denote the free monoid (free semigroup resp.) over 
$\Sig$.  The elements of a free monoid are called 
\emph{words}. The empty word in a free monoid is also denoted by $1$ as in other monoids.  We have $\Sig^+= \Sig^*\sm \os 1$.
The \invol extends 
to 
$\Sig^*$: for a word $w = a_1 \cdots a_m$ with $a_i\in \Sig$
 we let  $\ov{w} = \ov{a_m} \cdots \ov{a_1}$.  
The monoid $\Sig^*$ is called the \emph{free monoid with involution over $\Sig$}. 
If $\ov a = a$ for all $a \in \Sig$ then $\ov{w}$ is simply the word $w$ read from right-to-left.
The length of word $w$ is denoted by $\abs w$, and ${\abs w}_{a}$
counts how often a letter $a$ appears in $w$.

If a group $H$ acts on $\Sig$,
then $g\in H$ acts on $w= a_1\cdots a_m$ with $a_i\in \Sig$ by
$$g(w) = g(a_1)\cdots g(a_m).$$
A letter $a\in\Sig $ is 
\emph{$H$-visible} in $w$ if $g(a)$ is a factor of $w$ for some $g\in {H}$. 

Sometimes it is useful to view a word of $w=a_1\cdots a_m$ with $a_i\in \Sig$  as a labeled linear order as follows. We let $\os{1\lds m}$ be set of \emph{positions}; and we label a position $1\leq p \leq m$ with the letter $w[p]= a_p$. 
For $1\leq i,j \leq m$ we denote by $[i,j]$ the \emph{interval}  $\os{i\lds j}$. The 
labels of the  interval define a factor  
$$w[i,j]= a_i\cdots a_j.$$ 
An \emph{occurrence} of a  factor $u$ in $w$ is an interval  $[i,j]$ such that $u =w[i,j]$.  Typically, a factor has several occurrences. 
For a position $1\leq p\leq |w|$ we  define its \emph{dual} position $\ov p$ by 
$\ov p = m+1-p$. The notion of duality extends to intervals $[i,j]$ with $1\leq i,j\leq |w|$ by  $\ov{[i,j]}= [\ov j,\ov i]$. Thus, the set of intervals is a set with \invol.

 A word $w$ such that $w=\ov w$ is called \emph{self-involuting}, and for such $w$ we have
$\ov{w[i,j]} = w[\ov j,\ov i]$.

\subsection{Automata, rational and recognizable subsets in a monoid}\label{sec:naratrec}
For notation and results in the this subsection we refer to the classical textbook~\cite{eil74}. A 
\emph{regular language} in finitely generated free monoids
 can be defined via a \emph{nondeterministic finite automaton} or via recognizability using a \hom to a finite monoid, to mention just two possible definitions. We need the corresponding notions subsets for other monoids, too. 

Let $M$ be any monoid (not necessarily equipped with an \invol). 
A \emph{nondeterministic automaton} over $M$ is a directed arc-labeled graph $\cA$ {denoted as a tuple $\cA=(Q,M,\del,\inI,\finF)$.
The vertices of $\cA$ form the set $Q$ of \emph{states}, with subsets  $\inI$ of
\emph{initial} and $\finF$  \emph{final}  states. We write $\cA=\es$ if there are no states. 
The arcs are called \emph{\tra{s}} and they  are  labeled with elements of the monoid $M$. We represent the set of transitions  $\del$ as a subset of $Q\times M \times Q$.} 
A \tra labeled by $1\in M$ is called an \emph{$\eps$-transition}. 
In pictures we draw a \tra $(p,h,q)$ as $p \arc h q$.
We say that 
$m\in M$ is \emph{accepted} by the automaton $\cA$ if
there exists a path from some initial to some final state such that multiplying the  labels together yields $m$. 
This defines the \emph{accepted 
language} $L(\cA) = \set{m \in M}{m \text{ is accepted by } \cA}$.   

Often we specify $M$ together with a set $\Sig$ of generators or, more generally, together with \hom
$\pi$ from the free monoid $\Sig^*$ to $M$. In that case, we may denote $\cA$ alternatively 
as $\cA=(Q,\Sig,\del,\inI,\finF)$ 
where $\del\sse Q\times \Sig^* \times Q$. 
This allows two natural interpretations 
of $L(\cA)$:   first  as the set of words $L(\cA)\sse \Sig^*$ obtained by reading  $\cA$ as a shorthand for $(Q,\Sig^*,\del,\inI,\finF)$;  second 
as $L(\cA)\sse M$ by identifying $L(\cA)\sse \Sig^*$
with $\pi(L(\cA))$. If the distinction is crucial, we write $L(\cA)$ and $\pi(L(\cA))$. However, sometimes
a sloppy notation $L(\cA)\sse M$ is used. There will be however no risk of confusion. 

A \emph{\subauto} $\cA'$ of $\cA= (Q,M,\inI,\finF)$ is an automaton 
$\cA'= (Q',M,\inI',\finF')$ such that $Q'\sse Q$, $\del'\sse \del$, $\inI'\sse \inI$, and $\finF'\sse \finF$.

An automaton is called \emph{trim} if every {state} is on some path from an initial to a final {state}. For a trim automaton $\cA$ we have 
$L(\cA)\neq \es$ \IFF $\cA \neq \es$. Clearly, every automaton $\cA$ contains a trim \subauto $\cA'$ such that $L(\cA') =L(\cA)$.

If the set of transitions is finite, then we call $\cA$ a \emph{nondeterministic finite automaton} (or \emph{NFA} for short). A subset $L\sse M$ is  called \emph{rational} if 
$L$ is accepted by some NFA over $M$.

A subset 
$L\sse M$ is called \emph{recognizable} if there is a \hom $h\colon M\to N$ to a finite monoid $N$ such that $\oi h (h(L)) =L$.   
In case that $M$ is a finitely generated free monoid, the notion of rational and recognizable subsets coincide; so these subsets  are also called \emph{regular}. 
It follows rather easily  that a monoid $M$ is finitely generated \IFF all recognizable 
subsets are rational \cite{mck64}. 
Finite subsets are always rational, but  finite subsets in a group  are recognizable \IFF the group is finite. 

\subsection{From NFAs to Boolean matrices}\label{sec:NFA2Bo}
Nondeterministic finite automata encode regular languages in a concise and natural way. It is convenient to work in an algebraic framework with recognizing \morph{s}, too. Let us recall a well-known and classical construction.

Let $\cA=(Q,{\Sig},\del,\inI,\finF)$ be any NFA with 
$m$ states. Then we can assume that $Q=\os{1\lds m}$, and we represent \tra{s} as a mapping to Boolean $m\times m$ matrices as follows. For each letter $a\in\Sig$ we define a matrix
$\mu_\cA(a)\in\B^{m\times m}$ by 
\begin{equation}\label{eq:nfa2mat}
(\mu_\cA(a))_{s,t} =1\iff a \in L(Q,{\Sig},\del,\os{s},\os t).
\end{equation}
We obtain a \hom 
$\mu_\cA\colon {\Sig}^*\to \B^{m\times m}$
such that for all  $w \in \Sig^*$ we have
$$w\in L(\cA)\iff \mu_\cA(w)\in \mu_\cA(L(\cA) )\iff \exists s\in \inI \;\exists t\in \finF:(\mu_\cA(w))_{s,t}=1).$$

\begin{example}\label{ex:redword}
Let ${\Sig}$ be an alphabet (with \invol)  and $H\leq \Aut({\Sig})$ be a subgroup of automorphisms. 
The set $R$ of words having a factor 
$e\ov e$ for some $e\in {\Sig}$ is regular and $R$ is invariant under the action of $H$. Let  $\F= {\Sig}^*\sm R$ be the complement: it is the set of \emph{reduced} words. The set  $\F$ is in canonical bijection with the 
group ${\Sig}^*/\set{e\ov e = 1}{e\in {\Sig}}$. 
 The language $R$ is accepted by an NFA (actually a DFA) with 
$1+\abs {\Sig}$ states. Hence, $\B^{m\times m}$ recognizes them where $m=1+\abs {\Sig}$.

The size of $\B^{m\times m}$ is $2^{m^2}$, but there is a much smaller monoid $N$ which recognizes $R=\bigcup\set{\Sig^*e\ov e\Sig^*}{e\in \Sig}$ and hence $\F$, too. 
 The elements of $N$ are $1$, $0$, and the pairs $(a,b)$ in ${\Sig}\times {\Sig}$. The elements $1$ and $0$ act as the neutral element and a zero, respectively. 
The multiplication for the other elements is given by $(a,b)\cdot (c,d)= (a,d)$ if
$b\neq \ov c$ and $(a,b)\cdot (c,d)= 0$, otherwise.
The involution  is given by $\overline{(a, b)} = (\ov b, \ov a)$. 
In effect, $N$ ``remembers" the first and last letters of elements in $\F$.  A pair 
$(a,b)$ switches to $0$ once  a factor $e\ov e$ is recorded.

 It is an $H$-monoid by the natural action of $H$ induced by the action of $H$ on ${\Sig}$.
Consider the \morph of $H$-monoids 
$\mu\colon {\Sig}^*\to N$ which is defined by $\mu(a) = (a,a)$. Then we have
$R=\oi{\mu}(0)$ and $\F=\oi{\mu}(N\sm \os 0)$. 
The size of $N$ is therefore $2+{\abs \Sig}^2 - \abs \Sig$. Therefore each element in $N$ can be specified by at most $1+\log_2 (1+\abs \Sig)$ bits. 

\end{example}

\section{Regular languages in presence of an \invol and an $H$-action}\label{sec:regiH}
The application of this section is stated precisely in \prref{prop:iGimons}.
We give a  construction which allows us to handle regular constraints for twisted word equations using \morph{s} to finite $H$-monoids.

We proceed in two steps. The first step 
forces \homs to respect the \invol. This part is from the arXiv version of \cite{CiobanuDiekertElder2016ijac}  which was inspired by \cite{dgh05IC}.  
We repeat that construction. 
Let $N$ be any monoid.  We define its dual monoid $N^{\mathrm{op}}$ to use  the same set 
$N^{\mathrm{op}}=N$, but $N^{\mathrm{op}}$ is equipped with a new multiplication $x \circ y= yx$. 
In order to indicate whether we view an element in the monoid $N$ or $N^{\mathrm{op}}$, we use a flag: 
for $x \in N$ we write $x^{\mathrm{op}}$ to indicate the same element in $N^{\mathrm{op}}$. Thus, we can suppress the symbol $\circ$ and we simply write 
$x^{\mathrm{op}}  y^{\mathrm{op}}= (yx)^{\mathrm{op}}$. The notation is intended to mimic transposition in matrix calculus, where the dual operation is just the transpose.
Similarly, we  write $1$ instead of $1^{\mathrm{op}}$ which is true for the identity matrix as well. The 
direct product $N\times N^{\mathrm{op}}$ becomes a monoid with involution 
by letting $\ov {(x,y^{\mathrm{op}})} = (y,x^{\mathrm{op}})$. Indeed, 
$$\ov {(x_1,y_1^{\mathrm{op}}) \cdot (x_2,y_2^{\mathrm{op}})} 
= (y_2 y_1, (x_1x_2)^{\mathrm{op}}) 
= \ov {(x_2,y_2^{\mathrm{op}})}\cdot \ov {(x_1,y_1^{\mathrm{op}})}.$$
The following observations are immediate.
\begin{itemize}
\item If $N$ is finite then $N\times N^{\mathrm{op}}$ is finite, too. 
\item We can embed $N$ into  $N\times N^{\mathrm{op}}$ by a \hom $\iota\colon  N \to N\times N^{\mathrm{op}}$ defined by $\iota(x) = (x,1)$. Note that
if $\eta\colon  N\times N^{\mathrm{op}}\to N$ denotes the projection onto the first component, then $\eta \iota = \id{N}$.
\item If $M$ is a monoid with involution and $\rho\colon  M \to N$ is a \hom of monoids, then 
we can lift  $\rho$ uniquely to a \morph  $\opphi\colon  M \to N\times N^{\mathrm{op}}$ of  monoids with \invol such that 
we have $\rho = \eta \opphi$. Indeed, it is sufficient and necessary to define 
$\opphi(x) = (\rho(x),\rho(\ov x)^{\mathrm{op}})$.
\end{itemize}

\begin{example}[\cite{dgh05IC}]\label{ex:dgh05IC}
Let $M= \Bn$. Then  $M\times M^{\mathrm{op}}
$ 
is a submonoid of the set of $2n \times 2n$-Boolean matrices: 
$$\Bn\times ({\Bn})^{\mathrm{op}}= \set{\vdmatrix P00{Q^T}}{ P,Q \in \Bn} \text { with }  
\overline{\vdmatrix  {P}00 {Q^T}}  =
\vdmatrix {Q}00{P^T}. 
$$
In the line above $P^T$ and $Q^T$ are the transposed matrices. \end{example}

Now, we switch to the new part of our construction.
 For readers familiar with wreath products it might be helpful to say that the following is  a wreath product construction.
 Let $N$  be a monoid with \invol. Consider the direct product $N^H$, which is the set of maps from $H$ to $N$.   
We denote the elements of $N^H$ 
by tuples $(n_g)_{g}$ with the interpretation that $g\in H$ is mapped to $n_g\in N$. It is a monoid by pointwise multiplication with \invol $\ov{(n_g)_{g}} = (\ov{n_g})_{g}$. The monoid $N$ embeds into $N^H$ by sending $n$ to the constant map $(n)_g$.   
We let act $H$ on $N^H$ by
$$f \cdot (n_g)_g= 
(n_{gf})_{g}.$$
Now, let  $M$ be an $H$-monoid with \invol and let $\psi\colon M\to N$ be a \morph of monoids with \invol, then we extend it to 
$\wt \psi\colon  M\to N^H$ by 
$$\wt \psi(x) = (\psi(gx))_g.$$
The \hom $\wt \psi$ respects the \invol since 
$$\wt \psi(\ov x) = (\psi(g\ov{x}))_g = \ov{(\psi(g x))_g};$$
and it respects the action of $H$ since 
$$\wt \psi(fx) = (\psi(gfx))_g = f \cdot \wt \psi(x).$$
Moreover, $\psi$ factorizes though $\wt \psi$ because $\wt \psi(x) = (\psi(gx))_g$ 
implies $\psi= \eta_1 \wt \psi$ where $\eta_1((n_g)_g) = n_1$.

\begin{center}
\begin{figure}\begin{center}
\begin{tikzpicture}[
	xscale=5, yscale=1.2			
]
\path (0.3,-2) node (0) {$M$};
\path (1,0) node (1) {$(N\times N^{\mathrm{op}})^H$};
\path (1,-1) node (2) {$N\times N^{\mathrm{op}}$};
\path (1,-2) node (3) {$N$};

\draw [->, >=latex] (0) -- (1) node[midway, above] {$\wt \phi$};
\draw [->, >=latex] (0) -- (2) node[midway, above] {$\opphi$};
\draw [->, >=latex] (1) -- (2) node[midway, right] {$\eta_1$};
\draw [->, >=latex] (2) -- (3) node[midway, right] {$\eta$};
\draw [->, >=latex] (0) -- (3) node[midway, above] {$\phi$};
\end{tikzpicture}
\caption{A lifting of a \hom $\phi$ to an $H$-\Hmorph $\wt \phi$. }

\label{fig:recHNi}    \end{center}
   \end{figure}
    \end{center}

If we start with a \hom $\phi$  {}from an $H$-monoid with \invol $M$ to a monoid $N$ 
without \invol, then $\psi$ means the \morph $\opphi\colon  M \to N\times N^{\mathrm{op}}$; and $\wt \phi$ is a shorthand for  $\wt \opphi$. Thus, the constructions above yield the commutative diagram as in \prref{fig:recHNi}. 
In that figure $\phi=\eta \eta_1 \wt\phi  $ is a \hom, $\opphi=\eta_1\wt \phi$ is a \morph
of monoids with \invol, and $\wt\phi$ is an $H$-\Hmorph. As a direct consequence we obtain the following proposition. Recall that an $H$-monoid is, by definition, a monoid with \invol.
\begin{proposition}\label{prop:iGimons}
Let $H$ be a finite group that acts on a finite alphabet $A$; and hence 
via a length and involution preserving action on $A^*$. Then all recognizable subsets of $A^*$ can be recognized by some $H$-\Hmorph to a finite $H$-monoid. 
More precisely, let $\phi\colon A^*\to N$ be a \hom to a finite monoid; and $L=\oi\phi(F)$
for some $F\sse N$. Then we have $L=\oi{\wt\phi}(\wt F)$ where 
$\wt F= \oi {\wt \eta}(F)$   and $\wt \eta = \eta\eta_1$.
\end{proposition}

\subsection{Stabilizers}\label{sec:ssgm} Let $H$ be a finite group acting on an alphabet  ${A}$  via a \hom 
$\psi\colon H\to \Aut({A})$. We assume that $H$ is given 
by its multiplication table. The table can be  stored with $\Oh(|H|^2\log |H|)$ bits.
We also need a way 
to represent the action of $H$ and the stabilizer subgroups of $H$. The action is recorded  by 
writing down for each $f\in H$ the element  $\psi(f)$ as a permutation 
of ${A}$. To do this, we write $\psi(f)$ as a set of 
pairs $\psi(f)=\set{(a,f(a))}{a\in {A}}$. Thus, the action of $H$ on ${A}$ can be stored with $\Oh(|H| \abs {A} \log |{A}|)$ bits.

For a word $w\in {{A}}^*$  we denote by  $H_w$ its \emph{stabilizer}: 
$$H_w=\set{f\in H}{f(w) =w}.$$ 
stabilizer are subgroups; and the set of subgroups of the form $H_w$ form a commutative monoid $\ST(H)$ where the operation is intersection, the identity element is $H$, and the involution is the identity. Indeed, we have $$H_{uv} =\{g\in H\mid g(uv)=g(u)g(v)=uv\}= 
H_u\cap H_v,$$ $$H_{\ov u}= \{g\in H \mid g(\ov u)=\ov{g(u)}=\ov u\}=H_u$$ and $$H_{g(u)}=\{f\in H\mid f(g(u))=g(u)\}=\{f\in H\mid \oi g (f(g(u)))=u\}=\oi g H_u g$$ for all $u,v\in {{A}}^*$,  $g\in H$.
\Ip $H$ acts on $\ST(H)$ by conjugation, and 
 $\ST(H)$ is therefore an $H$-monoid. Let $\SG(H)$ denote the set of all subgroups of $H$, then  $\nu(u)=H_u$ yields a canonical surjective \morph
$$\nu\colon {{A}}^* \to \ST(H) = \set{H_w\in \SG(H)}{w\in {{A}}^*}.$$ 

A basic test is to answer ``$f\in H_w$?''.
This is easy: for $w=a_1\cdots a_m$ with $a_i\in {{A}}$ we check
one after another that $f(a_i) = a_i$ for $1\leq i \leq |w|$. 
This enables an efficient test to decide whether or not 
$H_u\sse H_v$. For each $f\in H$ one after another 
we test whether $f\in H_u$ implies $f\in H_v$. 
In particular, we can answer ``$H_u=H_v$?''.
 \begin{lemma}\label{lem:sizeSTH}
 We have $\ST(H) = \set{H_w}{w\in {{A}}^* \wedge |w| \leq \log_2 |H|}.$
 \end{lemma}
 
\begin{proof}
 For each $w\in {{A}}^*$ and $b\in {{A}}$ we either have 
$H_w= H_{wb}$ or $|H_w|\geq 2 |H_{wb}|$ because $H_{wb}=H_w\cap H_b$. So, if they are not equal, then their intersection is a subgroup which has index at least $2$ in $H_w$.
\end{proof}

The idea is therefore to use  words of length at most $\log_2|H|$ to represent stabilizers and to perform the calculations for stabilizers on these words. 
The representation is not unique, but this does not matter for our application. 
 
 A main task is to compute a word $w$ of length at most $\log_2|H|$ such that 
$H_w= H_u \cap H_v$ (when $u$ and $v$ satisfy $|u|,|v| \leq \log_2|H|$). This can be done efficiently according to the following lemma. 
 \begin{lemma}\label{lem:smallSTH}
Every element in the commutative monoid $\ST(H)$ of stabilizers can be
 represented by  a word in $A^*$ of length at most $
\log |H|$, thus with at most 
$\Oh(\log \abs H  \cdot\log |{A}|)$ bits.
Using this representation, multiplication (that is, intersection) and computing the $H$-action (that is, conjugation), can be done in  space $\Oh(\log \abs H  \cdot\log |{A}|)$. 
\end{lemma}

\begin{proof}
Let $uv = a_1 \cdots a_m$. 
We have to compute a word $w$ of length at most $\log_2|H|$ 
such that $H_w= H_{a_1\cdots a_m}$.

We run a loop for $i=1$ to $m$. At each step 
we have computed a word $u_{i-1}$  such that $H_{u_{i-1}}= H_{a_1\cdots a_{i-1}}$ with the invariant $2^{|u_{i-1}|}|H_{u_{i-1}}| \leq |H|$
(initially we let $u_0=1$). There are exactly three mutually disjoint cases.
\begin{enumerate}
\item If $H_{u_{i-1}}\sse H_{a_i}$ then we let $u_i = u_{i-1}$.
\item If $H_{a_i}\ssneq H_{u_{i-1}}$, then we let $u_i=a_i$. 
\item If $H_{a_i}$ and $H_{u_{i-1}}$ are incomparable with respect to containment, then we let 
$u_i = u_{i-1}a_i$.
\end{enumerate}
Each case  keeps the invariant because, by induction, $2^{|u_{i}|}|H_{u_{i}}| \leq |H|$.
\end{proof}

\begin{remark}\label{rem:add}
The reader can easily check that our computation of a word 
$w$ with $H_w= H_{a_1\cdots a_m}$ yields a word $w$ of pairwise different letters. 
So, we could actually put a bound  $|w| \leq \min\os{\log_2|H|, |A|}$ on its length. 
\end{remark}
\subsection{ \HNs}\label{sec:ngi}
In the following  $H$ denotes a finite group and $N$ denotes  a finite $H$-monoid. 
Let $M$ be a set (resp.~be a monoid) with \invol and  $$\mu\colon M \to N$$ be a \morph. 
We say $M$ (together with $\mu$) is an \emph{\HNalp} (resp.~an \emph{\HN}) if $H$ acts on $M$
such that $\mu(g\cdot x) = g \cdot \mu(x)$. 
For example, the identity map on $N$ makes $N$ itself to an \HN.  

A \morph between \HNs is an 
$H$-\Hmorph $\phi\colon M'\to M$ such that $\mu\phi = \mu'$. Thus, if $M$ is an \HN and $M'$ is a monoid with \invol where $H$ acts, then every
$H$-\Hmorph  $\phi\colon M'\to M$ turns $M'$ into an \HN where $\mu'$ is uniquely defined by the equation $\mu\phi = \mu'$.
The use of  \HNs is natural in our setting: the $H$-action is due to a group action on letters, and the finite monoid $N$ is used for the specification of rational constraints. It is clear that the specification of constraints has to be compatible with the group action.

\subsection{Free \HNs and types}\label{sec:twngi}
Let $B$ and $\cY$ be two disjoint \HNalps. 
 We call $B$ the alphabet of \emph{constants} and 
$\cY$  the set of twisted variables.  The free monoid with \invol $(B\cup \cY)^*$ becomes an \HN where $\mu\colon (B\cup \cY)^*\to N$ is induced by $B\cup \cY$. \begin{align*}
\ov{x_1\cdots x_m}&= \ov{x_m}\cdots \ov{x_1},\\
g\cdot({x_1\cdots x_m})&= g\cdot({x_1})\cdots g\cdot(x_m).
\end{align*}
By $\theta\sse (B\cup \cY)^*\times (B\cup \cY)^*$ we denote a finite  homogeneous  relation. Here as usual, a relation is called \emph{homogeneous} if  $(x,y)\in \theta$ implies 
$|x|=|y|$. If $(x,y)\in \theta$ then we also say that $(x,y)$ is a \emph{defining relation} 
because the algebraic object we are interested in is the 
quotient monoid 
$$(B\cup \cY)^*/\set{x=y}{(x,y)\in \theta}.$$
We need more structure of this quotient monoid; \ip $\mu\colon (B\cup \cY)^*\to N$
should  induce a \morph of \HN{s}. Actually we wish more, therefore we impose the following technical restrictions on 
$\theta$; and then we call $\theta$ a \emph{type} (and for a variable $X$ we also define the   \emph{type of $X$} denoted $\theta(X)$ below).
\begin{enumerate}
\item $(x,y)\in \theta$ implies $\mu(x) = \mu(y)$,  
$(\ov y,\ov x)\in \theta$, and 
$(f(x),f(y))\in \theta$ for all $f\in H$, even if these relations are not listed in the specification of $\theta$. 
\item If a (twisted) variable $X$ appears in $\theta$ (that is $|xy|_X\geq 1$ for some $(x,y)\in \theta$), then we call $X$ \emph{typed}. 
For a typed variable $X$ we require that there is  a unique primitive  word\footnote{Recall that $p$ is primitive \IFF it cannot be written as $p = r^e$ with $e \geq 2$.}
 $p\in B^*$ such that  $(Xp,\, pX) \in \theta$.   
We define $\theta(X)=p$, and 
 say that $\theta(X)$ is the \emph{type of $X$}. 
\item 
For $(x,y)\in \theta$ we allow exactly three possibilities:
\begin{enumerate}
\item[(i)] $(x,y) = (ab,ca)$ with $a,b,c\in B$.
\item[(ii)] $(x,y) = (X\,\theta(X),\,\theta(X)\,X)$ for variables $X$.
\item[(iii)] $(x,y) = (Xa,aY)$ where $a\in B$ and $X,Y$ are typed variables such that $X\neq Y$.
\end{enumerate}
\end{enumerate}

It is convenient to choose a subset $\cX\sse \cY$ 
which is closed under the \invol such that every $Y\in \cY$ has the form 
$Y = f(X)$ for some $X\in \cX$ and $f\in H$. 
In the following, 
by a \emph{variable} we typically mean $X\in \cX$ and thus, every twisted variable $Y\in \cY$ can be written as $f\cdot X$ for some $f\in H$ and $X\in \cX$. We assume $X\neq \ov X$ for all variables. 
Having chosen $\theta$ and $\cX$ 
we denote by $M(B,\cX,\theta,\mu)$ the following quotient monoid (and an \emph{\HN with type $\theta$}): 
$$M(B,\cX,\theta,\mu) = (B\cup \cY)^*/ \set{x=y}{(x,y) \in \theta}.$$ 
Point (1) from above makes sure  that one can extend the involution, the morphism $\mu$ and the action of $H$ to the quotient $M(B,\cX,\theta,\mu)$.
The homogeneity condition for $\theta$ makes it possible to solve the uniform word problem in $M(B,\cX,\theta,\mu)$ in nondeterministic quasi-linear space: 
\begin{lemma}\label{lem:nonuni}
There is an $\NSPACE(n \log n)$ algorithm which performs the following task.
The input is an alphabet $B$, a homogeneous relation $\theta\sse B^*\times B^*$, and two words $u,v\in B^*$ such that 
$$ |uv| + \abs B +\sum_{(x,y)\in \theta}|xy| \leq n.$$
The output is ``yes'' if $u=v$ in the quotient monoid 
$B^*/\set{x=y}{(x,y)\in \theta}$ and ``no'' otherwise. 
\end{lemma}

\begin{proof}
If $u=v$ in 
$B^*/\set{x=y}{(x,y)\in \theta}$, then nondeterministically we can apply 
rewriting rules from $\theta$ (which preserve length) to $u$ until we see $u=v$ in the free monoid $B^*$. We get the ``no'' answer because $\NSPACE(n\log n)$ is closed under complementation by the theorem of  Immerman-Szelepcs{\'e}nyi, see for example \cite{pap94}. 
\end{proof}

Using \prref{lem:nonuni} we represent elements
in $M(B,\cX,\theta,\mu)$ by words over $(B\cup \cY)^*$.
For $\theta=\es$ we obtain 
$(B\cup \cY)^*= M(B,\cX,\es,\mu)$. By $M(B,\theta,\mu)$ we denote the \HN submonoid with type $\theta$ which is generated by $B$. \Ip $B^*=M(B,\es,\mu)$.
If $\theta\cap B^*\times B^*=\es$, then $M(B,\es,\mu)= B^*$ is a free submonoid of 
$M(B,\cX,\theta,\mu)$.
If $H$ acts without fixed points on $\cY$, then we identify $\cY= H\times \cX$ and the action becomes $g\cdot (f,X) = (gf,X)$. 
Later we will write typed variables using a special bracket notation $[X,p]$.
For complexity issues we  will only allow $\theta$ which satisfy
$\abs\theta \in \Oh({\abs H}\Abs{\cS}^2)$ where $\Abs{\cS}$ is specified in \prref{eq:AbscS} below.

\subsection{\edtol languages and relations}\label{sec:Languages}
The acronym {EDT0L} refers to \emph{{\textbf{E}xtended, \textbf{D}eterministic, \textbf{T}able, 
\textbf{0} interaction, and \textbf{L}indenmayer}}. See the handbook  \cite{rs97vol1} for the many  results about \textbf{L}-systems.
Let $A$ be  an alphabet. 
A subset $L$ in a $k$-fold direct product $A^* \times \cdots \times A^*$ is called a {\emph{EDT0L relation}}
if there is some (extended) alphabet $C$ with $d_1\lds d_k \in C$ such that $A\sse C$ and a rational set 
$\cR \sse \End(C^*)$ of \Endos over $C^*$ such that 
$$L = \set{(h(d_1) \lds h(d_k))}{h \in \cR}.$$
The classical situation refers to $k=1$. In that case we speak about an \edtol language; and our definition uses a characterization of \edtol languages due to 
\cite{Asveld1977}. The connection is as follows. 
Let $\$$ be a symbol which is not in $A$ and  $L\sse A^* \times \cdots \times A^*$
be a \edtol relation, then 
$\set{w_1\$w_2\cdots \$w_k}{(w_1\lds w_k)\in L}$ is an \edtol language in the usual sense over the alphabet $A\cup \os \$$. 
It should also be noted that the class of  \edtol languages coincides with the class
of HDT$0$L languages (\cite[Thm.~2.6]{rs97vol1}).   

We say 
$L$ is an \emph{effective} \edtol relation if  there is an effective description of an NFA $\cA$ with  transitions labeled by  ``deterministic tables'' of pairs $(c,u_c) \in C \times C^*$ (encoding the \Endo which maps $c$ to $u_c$ (and $\ov c$ to $\ov{u_c}$))\footnote{Without restriction we can assume each transition is labeled by an endomorphism which changes at most one pair of letters $c,\overline c$.} and  letters $d_1\lds d_k \in C$ such that 
$(w_1\lds w_k) \in L$ \IFF there is some $h\in L(\cA)\sse \End(C^*)$ such that 
$(w_1\lds w_k)= (h(d_1) \lds h(d_k))$.

\section{Twisted word equations}\label{sec:tweqs}

\subsection{The initial setting}\label{sec:inset}
We begin with a nonempty alphabet of constants $A$, and a list of $2k$ variables ${\cV_0}$ (as always, both with \invol) and a finite group $H$ where $H$ acts on $A$ via a \hom $\psi\colon H\to \Aut(A)$. \Ip $|\psi(H)|\leq |A|!\leq |A|^{|A|}$. 
As above, $H$ acts on $H\times {\cV_0}$ by $f\cdot (g,X) = (fg,X)$. 
For $w\in A^*$
and $f\in H$ we also use the notation $f(w) = (f,w)$. Hence, we may represent elements in  $(A\cup (H\times {\cV_0}))^*$ by words in  $(H\times (A^* \cup {\cV_0}))^*$. 
We abbreviate $(1,x)$ as $x$ for $x\in A^* \cup {\cV_0}$.  By $\mu_0\colon A^* \to N$ we mean a \hom which respects the \invol and the action of $H$. Thus $A^*$ is, via  $\mu_0$, an \HN. Assume that 
$\mu_0$ has been extended to a mapping $\mu_0\colon A^*\cup {\cV_0} \to N$ such that
$\mu_0(\ov X) = \ov{\mu_0(X)}$, then $\mu_0$ extends to a \morph
$\mu_0\colon (A\cup (H\times {\cV_0}))^* \to N$ of \HNs by $\mu_0(f,X)= f\cdot \mu_0(X)$. 
Initially we work over free monoids. 
\begin{definition}\label{def:initsysS}
A system $\cS$ of \emph{twisted word equations with regular constraints} over $A$ and 
${\cV_0}$ is given by the following data: 
\begin{itemize}
\item A list of $2k$ variables such that ${\cV_0}= \os{X_1, \ov{X_1} \lds X_{k}, \ov{X_{k}}}$.
\item The set of twisted variables becomes $\cY =  H\times {\cV_0}$. 
\item A set of pairs $\set{(U_i,V_i)}{1 \leq i \leq s}$ where 
$U_i, V_i \in (A\cup \cY)^*$ .
\item A \morph $\mu_0\colon (A\cup  \cY)^*\to N$ of $H$-monoids, with $N$ finite.
\end{itemize}
A \emph{solution} of $\cS$ is  a \morph of sets with \invol 
 $\sig\colon {\cV_0} \to A^*$ which is (uniquely) extended
to an $A$-\morph of \HNs $\sig\colon (A\cup \cY)^* \to A^*$ such that 
\begin{itemize}
\item $\sig(U_i)=\sig(V_i)$ for all pairs $(U_i,V_i)$.
\item $\mu_0 \sig(X) = \mu_0(X)$ for all variables. Hence,  $\mu_0 \sig=\mu_0$. 
\end{itemize}
\end{definition}
As usual, a pair  $(U_i,V_i)$ representing a twisted equation
is simply written as $U_i=V_i$.

\begin{example}\label{ex:twereg}
Let $A=\{a,\ov a,b,\ov b\}, {\cV_0}=\{X,\ov X, Y, \ov Y, Z, \ov Z\}$,  $f,g\in H$ defined by $f(a) = b, g(a) = \ov a, g(b) = b$, 
$ U_1  =(f,X)a(g,\ov Y)$,  $V_1 =Z$,
 $U_2  =(f,Y)b$,  $V_2  =\ov ab(g,X)$, 
 $U_3 =Xa$,   $V_3 =b(f,X)$
  and  (for simplicity) $\mu_0(x)=1$ for all $x\in A\cup {\cV_0}$.  A pictorial representation of the example is shown in \prref{fig:extwereg}.
 The reader is invited to verify that one possible solution is
  $\sig(X)= bab, \sig(Y)= \ov baa\ov b, \sig(Z)= abaabaab$, and a second solution is $\sig(X)= b, \sig(Y)= \ov ba, \sig(Z)= aaab$. 
\end{example}

\begin{figure}[h!]
\begin{center}
\begin{tikzpicture}[scale=.6]

\draw  (0,4.5) -- (8,4.5) -- (8,6.5) -- (0,6.5) -- cycle;
\draw  (0,5.5) -- (8,5.5);

\draw  (9.5,4.5) -- (14.5,4.5) -- (14.5,6.5) -- (9.5,6.5) -- cycle;
\draw  (9.5,5.5) -- (14.5,5.5);

\draw  (16,4.5) -- (20,4.5) -- (20,6.5) -- (16,6.5) -- cycle;
\draw  (16,5.5) -- (20,5.5);

\draw  (10.5,4.5) -- (10.5,5.5);
\draw  (11.5,4.5) -- (11.5,5.5);
\draw  (17,4.5) -- (17,5.5);
\draw  (3,6.5) -- (3,5.5);
\draw  (4,6.5) -- (4,5.5);
\draw  (13.5,6.5) -- (13.5,5.5);
\draw  (19,6.5) -- (19,5.5);

\node[align=left, above] at (1.5,6.5) {$X$};
\node[align=left, above] at (3.5,6.5) {$a$};
\node[align=left, above] at (6,6.5) {$\ov Y$};
\node[align=left, below] at (4,4.5) {$Z$};
\footnotesize
\node[align=left, below] at (6,6.5) {$\downarrow g$};
\node[align=left, below] at (1.5,6.5) {$\downarrow f$};

\normalsize
\node[align=left, above] at (11.5,6.5) {$Y$};
\node[align=left, above] at (14,6.5) {$b$};
\node[align=left, below] at (10,4.5) {$\ov a$};
\node[align=left, below] at (11,4.5) {$b$};
\node[align=left, below] at (13,4.5) {$X$};
\footnotesize
\node[align=left, below] at (11.5,6.5) {$\downarrow f$};
\node[align=left, above] at (13,4.5) {$\uparrow g$};

\normalsize
\node[align=left, above] at (17.5,6.5) {$X$};
\node[align=left, above] at (19.5,6.5) {$a$};
\node[align=left, below] at (16.5,4.5) {$b$};
\node[align=left, below] at (18.5,4.5) {$X$};
\footnotesize
\node[align=left, above] at (18.5,4.5) {$\uparrow f$};

\end{tikzpicture}

\caption{Pictorial representation of \prref{ex:twereg}.}
\label{fig:extwereg}\end{center}
\end{figure}

If $\sig$ is a solution of $\cS$ we also say that $\sig$ \emph{solves} $\cS$. For $\os{X_1, \ov{X_1} \lds X_k, \ov{X_k}}$ the \emph{full solution set} $\cSol(\cS)$ of $\cS$ is defined as 
$$\cSol(\cS)= \set{(\sig(X_1)\lds\sig(X_{k})) \in A^* \times \cdots \times A^* }{\sig \text { solves $\cS$}}.$$

\subsection{The main result on twisted word equations}\label{sec:mtw}
Our main result shows that $\cSol(\cS)$ is an \edtol language, for which we can compute an effective 
description in polynomial space. 
In order to measure complexities accurately, we need a precise notion of input size. Let $\cS$ be  a system of twisted word equations with regular constraints over $A$ and 
${\cV_0}$ according to \prref{def:initsysS}.

We define the size of  $\cS$ using two parameters $\Abs{\cS}$ and 
 $m(\cS)$. Thus, the size is the pair $(\Abs{\cS},m(\cS))$. The first parameter  $\Abs{\cS}$  ignores the 
 size of the finite monoid $N$. It is the main parameter as we don't want that the complexity due to constraints dominates the overall complexity. The second parameter measures
 separately  the number of additional bits for handling the constraints.  We begin by defining  $\Abs{\cS}$. Let 
 \begin{equation}\label{eq:AbscS}
\Abs{\cS} = \abs H+\abs A + {k} + s  + \sum_{1\leq i \leq s} \abs{U_iV_i}.
\end{equation}
Recall that $2k$ is the number of variables, $s$ is the number of equations $U_i=V_i$, and $H$ denotes a finite group acting on $A$ and hence on $A^*$, too. 
We are interested in a situation only where
$A\neq \es$ and $\Abs{\cS} >4$. 

The finite monoid $N$ is also part of the input. We measure the size relative 
to $\Abs{\cS}$. Let $N$ be any finite $H$-monoid and $m_\cS(N)\in \N$ be a number such that elements of $N$ can be encoded by a number of bits which is at most 
\begin{equation}\label{eq:bitMon}
m_\cS(N) (2+\log|A|)\cdot\log \Abs{\cS}.
\end{equation}
Moreover, using this specification, monoid computations,
like computing the involution, the multiplication of two elements, and the action by $H$,
can be performed on a Turing machine in space $m_\cS(N)\Abs \cS \log \Abs \cS$. 
We let 
 \begin{equation}\label{eq:moncS}
m(\cS) = m_\cS(N) \text{ where $N$ is the monoid which appears in $\cS$}
\end{equation}
There are examples where the monoid  $N$ (which appears in $\cS$) is polynomially bounded in  $\Abs{\cS}$ and 
still $m(\cS)\in \Oh(1)$. However,  if $m(\cS)$ becomes 
 a polynomial in $\Abs{\cS}$, then we need to consider $m(\cS)$ separately for a finer analysis below $\PSPACE$.

An $H$-monoid $N$ is called \emph{small with respect to $\cS$} 
if $m_\cS(N)\in\Oh(1)$. The finite monoid recognizing 
reduced words (those without factors $a\ov a$ for $a\in A$) is small with respect to $\cS$, see \prref{ex:redword}. 
Being small is no restriction for the computability of the representation of the full \solu set as an \edtol relation -- we can always add trivial equations until $N$ becomes small with respect to $\cS$. 
Another example of a small monoid is the finite monoid $\ST(H)$ of stabilizers.
Its size depends on $A$ and $H$, but still it is small due to \prref{lem:smallSTH}.

Now, during the process we might wish to use direct products of (small) monoids.
For that the parameter $m_\cS$ behaves nicely: 
$$m_\cS(N_1 \times N_2) \leq m_\cS(N_1) +m_\cS(N_2).$$
Indeed, given $(n_1,n_2)\in N_1 \times N_2$ we can use the first $m_\cS(N_1)$
bits to encode $n_1$ and the last $m_\cS(N_2)$ bits to encode $n_2$. 
The operations on $N_1 \times N_2$ can be done component wise. 
\Ip a direct product of small monoids remains small.  

We are ready to state our main result which gives $\PSPACE$ as an upper bound for the complexity and a quasi-quadratic space bound if $N$ is small. 
\begin{theorem}\label{thm:central}
There is an  $\NSPACE(|H|  \cdot \Abs{\cS}^2\cdot m(\cS)\cdot  \log |A|\cdot  \log \Abs{\cS})$
algorithm which performs the following task. 
It takes as input 
 a system of twisted word equations $\cS$ with regular constraints. The system  
use  a set of constants $A$, a set of variables $\cV_0=\os{X_1, \ov{X_1} \lds X_{k}, \ov{X_{k}}}$, and the regular constraint is given by 
a \morph ${\mu_0}\colon A\cup\cV_0\to {N}$. The output is:
\begin{itemize}\item 
an extended alphabet $C$ of size $\Oh(|H|^2\Abs{\cS}^2)$;
\item distinguished letters $d_i\in C$ for each 
variable $X_i$;
\item
a trim NFA $\cAcS$ accepting a rational set of 
$A$-morphisms $L(\cAcS) \sse \End(C^*)$ such that 
\begin{equation}\label{eq:goodA}
\cSol(\cS) =\set{(h(d_{1})\lds h(d_{{k}})) \in C^* \times \cdots \times C^* }{h \in L(\cAcS)}.
\end{equation}
\end{itemize}
The algorithm stores intermediate equations with a length bound in $\Oh(|H|\Abs{\cS}^2)$.
Moreover, $\cSol(\cS)=\es$ \IFF $\cAcS= \es$;  and $\abs{\cSol(\cS)}<\infty$
\IFF $\cAcS$ doesn't contain any directed cycle. 
\end{theorem}

Let us comment on the rather complicated space bound 
$$|H|  \cdot \Abs{\cS}^2\cdot m(\cS)\cdot  \log |A|\cdot  \log \Abs{\cS}$$ which appears in the statement of the theorem. First, since $\abs H \leq \Abs \cS$ and $\abs C\in \Oh(|H|^2\Abs{\cS}^2)$, we can encode all letters by $\Oh(\log \Abs \cS)$ bits. Second, the $\mu$-value for the constraints changes dynamically: it is a priori not fixed for the extended alphabet $C$. 
So, it is enough 
to store the  $\mu$-value for each symbol which appears in intermediate equations. The  length bound on 
intermediate equations is in  $\Oh(|H|\Abs{\cS}^2)$. Each 
$\mu$-value is an element in $N$, which requires, by definition,  $m(\cS)\cdot  \log |A|\cdot  \log \Abs{\cS}$ bits for the encoding. 
Together, we need $\Oh(|H|  \cdot \Abs{\cS}^2\cdot m(\cS)\cdot  \log |A|\cdot  \log \Abs{\cS})$ bits to encode intermediate equations,

\begin{corollary}\label{cor:central}
Let $\cS$ be a  system of twisted word equations with regular constraints in variables $\os{X_1, \ov{X_1} \lds X_k, \ov{X_k}}$. Then   $\cSol(\cS)\sse A^*\times \cdots \times A^*$ is an effective  \edtol relation.
\end{corollary}

\begin{proof}
This is a formal consequence of \prref{thm:central}.
\end{proof}
Sections~\ref{sec:pretria} through \ref{sec:cm} are devoted to the proof of \prref{thm:central}.
The theorem  implies that we can decide in  \PSPACE\ (hence, in deterministic singly exponential time) whether $\cS$ is solvable and whether or not there are only finitely many solutions. 
The decision problem of whether a word equation  with regular constraints has a \solu is known to be \PSPACE-hard by \cite{koz77} because the intersection problem of regular languages is a special case. In our setting, if the finite monoid $N$ is small, then  the best known lower bound to date is $\NP$-hardness: it is the lower bound for deciding whether or not a linear Diophantine system over $\N$ has a \solu \cite{HU}.

\section{Preparation}\label{sec:pretria}
We begin the proof of \prref{thm:central} with some   technical preparations.
Sections~\ref{sec:sharks}--\ref{sec:sharks-zero}
 concern some reductions, and could easily
  be skipped in a first reading of the paper. These sections yield a reduction to the situation as stated in beginning of
\prref{sec:trisys}. We invite the reader to jump  directly to 
\prref{sec:trisys} and to read the parts in between only when necessary. 
\subsection{Reducing to faithful actions}\label{sec:sharks}
Recall that the action of $H$ on $A$ is given by a \hom $\psi\colon H\to  \Aut(A)$. We don't require that $\psi$ is injective because in some natural examples this is not the case, see \prref{sec:sl2z}. 
On the other hand  it is enough to prove \prref{thm:central} in the case where $H$ is actually a subgroup of $\Aut(A)$. Let us show how the reduction works.
The principal idea is to replace $H$ by $H/K$ where 
$K= \ker(\psi)$ is the kernel of $\psi$. 

If $M$ is any $H$-monoid, then the action of $H$ induces an action of $H/K$ on $M$ only if for all $f\in K$ and all $m\in M$ we have $f(m) = m$. In this case $M$ becomes 
an $H/K$ monoid: the action $g\cdot K(m) = g(m)$ is well-defined for all $g\cdot K\in H/K$. 
By definition of $K$, the free monoid $A^*$ is therefore an $H/K$-monoid. 
Inspecting the statement in \prref{thm:central}, there are two problems: the induced action of $K$ 
on the finite monoid is not trivial, in general. Moreover, the group acts $H$ freely 
on the set of variables $H\times \cX_0$, so there is no induced action of $H/K$ on this set unless $K$ is trivial. 
We address and solve both problems. 

Let us begin with the $H$-monoid $N$, then it has a largest $H$-invariant submonoid 
$N'$ where $K$ acts trivially. It is the submonoid  of $K$-invariant elements:
$$N'=\set{m\in N}{\forall f\in K:\, f(m)=m}.$$
The image $\mu_0(A^*)$ is a submonoid of $N'$, since 
 for all $f\in K$ and all  $w\in A^*$ we have 
 $f(\mu_0(w)) = \mu_0(f(w))= \mu_0(w)$. 
 However, the statement in \prref{thm:central}  doesn't  require that $\mu_0(X)$ takes values in $N'$. Let us show that $\cS$ is not solvable if there is some variable $X$ such that $\mu_0(X)\notin N'$.  Indeed, assume the contrary that there is a \solu $\sig\colon \cX_0\to A^*$  such that $\mu_0(X)\notin N'$.
 Then  there is 
some $f\in K$ such that 
 $$\mu_0(X) \neq f(\mu_0(X)) = f(\mu_0\sig(X))=  \mu_0(f(\sig(X))) = \mu_0\sig(X) = 
 \mu_0(X),$$
 which is a contradiction. We have $f(\sig(X))= \sig(X)$ because $K$ acts trivially on $A^*$. 
 
 Thus, as a first procedure in the proof of \prref{thm:central} we check that 
 $\mu_0(X)\in N'$ for all $X\in \cX_0$ (and therefore $\mu_0(f,X)\in N'$ for all $(f,X)\in H\times \cX_0$).
 The test
 runs over all 
 $f\in H$ and for each $f$ checks the following implication:
 \begin{equation}\label{eq:ceckK}
(\forall a\in A:\, f(a)=a) \implies f(\mu_0(X)) = \mu_0(X).
\end{equation}
 If the check is positive then $\mu_0(X)\in N'$ for all $X\in \cX_0$.
 If the check fails then we output 
 that $\cS$ is not solvable by defining $\cAcS=\es$. 
 We can perform the test within our given space bound by definition of $m(\cS)$. 

 After the test, we may assume that $\mu_0$ maps $A\cup (H\times \cX_0)$ to $N'$, and we replace $N$ by $N'$. We can use the same bit encoding for elements in $N'$ as we did for $N$, but if we have to guess an element in $N'$, we perform the test (\ref{eq:ceckK}). Thus, the parameter $m(\Phi)$ is still valid.

 In the second step we replace each variable
 $(f,X) \in H\times \cX_0$ by a fresh variable  $(f\cdot K,X) \in (H/K)\times \cX_0$.
 Again, this doesn't change $\cSol(\cS)$ since for every $H$-\Hmorph 
 $\sig\colon  H\times \cX_0 \to A^*$ and all $f\in K$ we have 
 $$\sig(f,X) = f(\sig(1,X)) = f(\sig(X)) = \sig(X)  \text{ and } \mu_0(f,X) = f(\mu_0(X))= \mu_0(X).$$
 
We are done.  We have shown the following statement. 
\begin{lemma}\label{lem:}
It is enough to prove \prref{thm:central} under the additional assumption that 
$\psi$ is injective. This means we can assume that $H$ is a subgroup of $\Aut(A)$. 
\end{lemma}

\subsection{Making the finite monoid $N$ larger}\label{sec:NdivN}
The aim in this section  is to replace $N$ by a larger monoid, which additionally encodes information about stabilizers
for all $x\in A \cup \cX_0$.  Up to a constant factor we don't change $m(\cS)$. 

Let 
$\ST(H)$ be the monoid of stabilizers, see \prref{sec:ssgm}. 
We define a 
\morph   $\mu_1\colon A\cup \cX_0\to N\times \ST(H)$ which maps 
a letter $a\in A$ to $(\mu_0(a),H_a)$ and 
each variable $X$ to some $(\mu_0(X),H_u)$ where $u\in A^*$ is any word of length at most $\log_2|H|$ by guessing $u$. The $H$-action on $N \times \ST(H)$ is inherited from the action on $N$ and the action on $\ST(H)$ by conjugation. Moreover, guessing is equivalent to taking the union over finitely many cases, see (\ref{eq:choicemus}). The union will give the same solutions we had before, and it will not introduce any new solutions.

The projection to the first component turns $N\times \ST(H)$ into an \HN.  
 Using $\mu_1$  we achieve the following:
\begin{itemize}
\item  for all $x\in (A\cup \cX_0)^*$, $\mu_1(x)\in N\times \ST(H)$ is a pair where the second component is $H_x$ which is represented by a word $u\in A^*$  of length at most $\log |H|$ such that $H_x=H_u$.
\end{itemize}

The switch to $\mu_1$ has  a price. By defining $\mu_1(X)$ we restrict the set of possible \solu{s}. The value $\mu_1(X)=(\mu_0(X),H_u)$ fixes the stabilizer $H_{\sig(X)}$ for a \solu $\sig$ to be the subgroup $H_u$. The number of choices (= nondeterministic guesses) to extend $\mu_0$ to $\mu_1$
is bounded by
\begin{align}\label{eq:choicemus}
(\abs{\ST(H)})^k\leq{\abs A}^{|\cX_0|\log |H|}.
\end{align}
These choices result in a splitting the original system into that many subsystems. Splitting is fortunately no problem since \edtol languages are  closed under finite union by taking the unions of the corresponding NFAs. 

At the end of this we rename $N\times \ST(H)$ as $N$ and  $\mu_1$ as $\mu_0\colon  A\cup \cX_0\to N$.

\subsection{Introducing a zero to $N$ and a marker symbol to $A$.}\label{sec:sharks-zero}
In the following it is convenient to have a special symbol $\#$, but we want to make sure no variable uses it, so we add $0$ to our constraint monoid. 
We next embed  our current $N$ into $N\cup \os0$ where $0$ is a fresh symbol not included in $N$ and $0$ acts as a zero in $N\cup \os0$. We turn it  into an $H$-monoid by defining $f(0)=0$ for all $f\in H$.

The monoid $N$ is an $H$-submonoid of $N\cup \os0$ and, by a slight abuse of language, we denote by
$\mu_0$ the induced mapping to the larger monoid  $N\cup \os0$ as well:xt
$$\mu_0\colon A\cup (H\times \cX_0) \arc{\mu_0} N \hookrightarrow N\cup \os0.$$
Without restriction (by adjusting constants if necessary) we may assume that $N\cup \os 0$ doesn't change the parameter $m(\cS)$.  Using $N\cup \os0$ instead of $N$ doesn't change $\cSol(\cS)$ because  $\mu_0(A\cup (H\times \cX_0)) \sse N$. Phrased differently, without restriction $N$ has a zero $0$; and 
$\mu_0(A\cup (H\times \cX_0)) \sse N\sm \os 0$.

At this point  we to add the special symbol $\#$ to $A$. We let 
$\mu_0(\#) = 0$, $\ov \#=\#$ and $f(\#)= \#$ for all $f\in H$. So, from now on we assume that $\#\in A$.
Since we did not change $\mu_0(X)$ for any variable $X$ we are sure 
that for every \solu $\sig$ to $\cS$ and every variable $X$ we have $|\sig(X)|_\#=0$: the marker cannot appear in any \solu.

\subsection{Triangular systems
}\label{sec:trisys}

Due to the preceding subsections we henceforth make the following assumptions:
\begin{itemize}
\item $H$ is a subgroup of $\Aut(A)$.
 \item There is some $\#\in A$ with $\ov \#=\#$ and $f(\#)=\#$ for all $f\in H$.
 \item The $H$-monoid $N$ contains a zero $0$ and for all $x\in A\cup \cX_0$ we have 
$$ \mu_0(x)=0\iff x=\#.$$
\item $\mu_0(x)$ is a pair where the second component is the stabilizer $H_x$ which is represented by a word $u\in A^*$  of length at most $\log |H|$ such that $H_x=H_u$ for all $x\in (A\cup \cX_0)^*$. 
Since $H_x=H_u$ and $u$ is in $A^*$  we have for all $f\in H$:
\begin{align}\label{eq:teststab}
f\in H_x\iff f(u)=u.
\end{align}
\end{itemize}

\begin{definition}\label{def:triaeqs}
A twisted word equation $U=V$ is called \emph{triangular} if $U$ contains at most two variables and $V$ at most one variable.
\end{definition}
 Following well-known methods (see for example \cite{die98lothaire})
we enlarge the set of variables $\cX_0$ to a larger set $\cX\supset \cX_0$
 (using at most $2\Abs \cS$ more variables) such that every equation becomes triangular in a more specific form:
 every equation has the form either $Z=1$ or $U=Z$ where  $\abs{U}=2$ and in both cases $Z$ is a variable.

  It therefore is enough to show \prref{thm:central} in the case where each each equation 
$U_i=V_i$ equals $(f,x)(g,y)=(h,Z)$ where $x,y \in A \cup \cX$ and $Z\in \cX$.
Moreover, since $(f,x)(g,y)=(h,Z)$ 
is equivalent to $(\oi h f,x)(\oi h g,y)=Z$ we can restrict ourselves to the case 
that each  
$U_i=V_i$ is of the form $(f,x)(g,y)=Z$.
Due to additional variables, we work over a set of variables
$$\cX = \os{X_1, \ov{X_1} \lds X_k, \ov{X_k}, X_{k+1}, \ov{X_{k+1}}\lds X_{k'}, \ov{X_{k'} }}$$
where $k\leq k'\leq 2 \Abs{\cS}$ and the first $2k$ variables  belong to the original system.

 Hence, the starting point is 
a system of equations $(f,x)(g,y)=Z$. The number of these triangular equations
is at most $2\Abs{\cS}$, so we can ignore this blow-up. 
During the process we need a more general 
form, nontrivial triangular equations appear as $u(f,x)w(g,y)v=u'Zv'$
where $u,w,v,u',v'$ are words over constants. Whenever such an equation with $\abs u = \abs {u'} = \abs v = \abs {v'}$ appears, then necessarily $u={u'}$ and $v =v'$; otherwise the equation is ``unsolvable''. That is, in a nondeterministic implementation of our process, this branch never leads to an accepting state. In an implementation of the algorithm we would reject the branch immediately. 

Finally, it is somewhat convenient to assume 
$\abs A + \abs \cX \leq |UV|$. We may achieve this for example 
by adding some dummy equations, and then 
  $\Abs \cS\in \Oh(\abs H + \abs{UV})$.

\subsection{Fixing more notation}\label{sec:fm}

During the process we  enlarge the sets of constants and variables. 
We begin with  two disjoint infinite alphabets with 
\invol $C$ and $\OO$ and 
$\Sig = C \cup \OO$. All constants are drawn from $C$ and all 
variables are drawn from $\OO$.  We never write down all elements from $C$ or $\OO$, just certain subsets which are needed in a specific situation.   Later we will  choose $\Sig$ such that $ \abs \Sig \in \Oh(\abs H^2 {\Abs \cS}^2)$, but initially for our infinite automata $\cT$ and $\cF$ we do not impose any size restrictions.

Throughout  we use following conventions and notation.
\begin{itemize}
\item There are $2k$ \emph{distinguished letters} $\os{d_1,\ov{d_1}\lds d_k,\ov{d_k} }$ which appear in \prref{thm:central}. 
\item $\#\in A \sse B= \ov B = \set{f(b)}{b\in  B}\sse C$.
\item $B\cap \os{d_1,\ov{d_1}\lds d_k,\ov{d_k}}=\es$ unless we are at a {\em final state} (to be defined below, see \prref{sec:finalstatedefn}).
\item  $\cY=  \ov \cY= \set{f(X)}{X\in  \cX, f\in H} \sse \OO$, and $X\neq \ov X$ for all $X\in \OO$. If $H$ acts freely on $\cY$, then we write $\cY= H \times \cX$, too. We view $\cX\sse \cY$.
\item The action of $H$ and the \invol on $\Sig$  extend those on $A\cup  \cY$.
\item $\mu\colon (B\cup \cY)^*\to N$ satisfies $\mu(a) = \mu_0(a)$ for $a \in A$.
\item $a,b,c, [p], [r,s,\lam], \ldots$ refer to letters in $C$.
\item $u,v,w, \ldots$ refer to words in $C^*$.
\item $X,Y,Z, [X,p],\ldots$ refer to variables in $\OO$.
\item $x,y,z, \ldots$ refer to words in $\Sig^*$.
\end{itemize}
These conventions hold everywhere unless explicitly stated otherwise.
They also apply to primed symbols such as $B'$,  $\cX'$ etc. 
Throughout we also use the following.
\begin{remark}\label{rem:mus}
If we know $\mu(x)\in N$ for any  $x\in (B\cup \cY)^*$, then we also know, 
by the  representation of the second component in $\mu(x)$, a word $u\in A^*$ of length at most $\log |H|$ such that $H_x=H_u$. This enables  for all $f\in H$ an efficient test 
to check whether $f(x) = x$, see (\ref{eq:teststab}) above. 
Moreover, if we have $z=xy$ with $x,y,z\in (B\cup \cY)^*$ and  we have to calculate 
$\mu(z)$ as the product $\mu(x)\mu(y)$, then we need to find a word $w\in A^*$ of length
at most $\log |H|$ such that $H_z = H_x\cup H_y= H_w$. We may assume that
 $H_x$ and $H_y$ are already given as  $H_x=H_u$ and $H_y=H_v$ where 
$uv\in A^*$ of length
at most $2\log |H|$. In order to compute $w$ we run the algorithm from the proof of \prref{lem:smallSTH}.
\end{remark}

\subsection{The initial word equation $\Winit$}\label{sec:tie}
For technical reasons we encode the initial (triangular) system $\set{(U_i,V_i)}{1 \leq i \leq s}$ of \twequs in variables $\cX$ where
$\set{X_i}{1 \leq i \leq {|\cX|/2}}\sse \cX=\ov \cX$
as a single 
word. 
Let
$U= U_1\#U_2\cdots \#U_s$ 
and 
$V= V_1\#V_2\cdots \#V_s$. 

The \emph{initial equation} $\Winit\in (A \cup (H\times\cX))^*$ is defined 
 as: 
\begin{equation}\label{eq:Winit}
\Winit=  \#X_1\cdots \#X_{|\cX|/2} \# U\# \# \ov V\# \ov{X_{|\cX|/2}}\#\cdots \ov {X_1} \# .
\end{equation}
\Ip  each $X\in \cX$ appears in $\Winit$. Here:
\begin{align*}
{\cX_0}& = \os{X_1,\ov{X_1} \lds X_k, \ov{X_k}}\text{ and}\\
\cX &=\os{X_1,\ov{X_1}\lds X_k, \ov{X_k}, X_{k+1}, \ov{X_{k+1}}\lds {X_{|\cX|/2}}, \ov{X_{|\cX|/2}}}
\end{align*}
Note that $\sig(W) =\sig(\ov W)$ \IFF  $\sig(U_i) =\sig(V_i)$ for all $i$.

\subsection{Fixing the parameters $n$, $\eps$, and $\del$}\label{sec:ned}

Having defined $\Winit$ we fix the following parameters $n,\eps, \del\in \N$ by 
\begin{equation}\label{eq:n}
 n= |\Winit|, \quad  \eps= 30n, \quad \text{and }\del = \abs H \eps = 30 |H| n.
\end{equation}
By our assumptions this implies $n>|A|+|\cX|$. We have 
$\Abs \cS \in \abs{H}+\Theta(n)$. Moreover, since $n\in \Oh(\Abs \cS)$, we have $\del\in \Oh(\abs H \Abs \cS)$ and $\eps\in \Oh(\Abs \cS)$.
As a consequence: 
\begin{equation}\label{eq:Odeln}
\Oh(\del n )\sse \Oh(\abs H \Abs \cS^2).
\end{equation}
Note that the right-hand side in (\ref{eq:Odeln}) coincides with the space bound we allow to store intermediate equations according to \prref{thm:central}.

\subsection{Extended equations and their solutions}\label{sec:exeq}
The NFA over the monoid $\End(C^*)$ we will construct uses extended equations as states. The overall strategy is to remove variables from the equation until  no variables remain. During the process 
we will enter a phase called  \delper compression repeatedly (which is the analogue of  ``block compression'' for solving word equations in free groups). During each call of   \delper compression,  each variable may create temporarily two new variables which will vanish before the end of that call. So at most $3\abs H n$  variables are needed, including these additional (temporary) variables.

\begin{definition}\label{def:exe}
An \emph{extended equation} is a tuple $E= (W,B,\cX,\theta,\mu)$, where $A\sse B$  and 
$M(B,\cX,\theta,\mu)$ is an \HN with type $\theta$. Moreover, we require: 
\begin{enumerate}
\item $W\in M(B,\cX,\theta,\mu)$ which can be written as a word in the form:
 $$W=  \#x_1\#\cdots \#x_{|\cX|/2} \# u_1\cdots \#u_s\# \, \# \ov {v_s}\#\cdots \ov {v_1} \# \ov{x_{|\cX|/2}}\#\cdots \ov {x_1} \#$$ 
with 
$x_i$, $u_j$, $v_k \in (B\cup \cY)^*$ and  $\mu(x_i)\neq 0, \mu(u_j) \neq 0, \mu(v_k)\neq 0$. 
\item Given $W$ as above 
we call $u_i=v_i$ a \emph{local \equ}.
\item $|W|_\# = |\Winit|_\#$.
\item For every $X\in \cX$ there exists some $f\in H$ such that $f(X)$ appears in $W$. 
\item We say $E$   is a \emph{standard state} if  first, $\theta=\es$,
 and second, all local \equs are triangular.
 \item If $E= (W,B,\cX,\es,\mu)$ is a standard state, then 
 $\cX\sse \cX$. 
  Moreover, 
 $$\sum_{Y\in \cY} |W|_Y \leq \sum_{Y\in \cX} |\Winit|_Y.$$
 \item If $E= (W,B,\cX,\theta,\mu)$ is any state, then
 $\sum_{X\in \cX} |W|_X\leq 3n.$ (Thus, we can bound $\OO$ by $\abs\OO\leq 3|H|n$ right away.) 
 \item If variables $X,Y$ are typed with $X\neq Y$ 
 and $(Xa,aY)\in \theta$, then we  have $\theta(X) a = a\theta(Y)$ in the submonoid $M(B,\theta,\mu)$ generated by $B$.
 \end{enumerate}
\end{definition}
Recall  (\prref{sec:twngi}) that if a variable $X$  is typed, then there is a primitive word $p=\theta(X)$ such that 
$(Xp,pX)\in \theta$.

\begin{definition}\label{def:esolu}
Let $E= (W,B,\cX,\theta,\mu)$ be an extended equation. 
\begin{itemize}
\item A \emph{solution} is a $B$-\morph 
$\sig\colon M(B,\cX,\theta,\mu)\to M(B,\theta,\mu)$
such that:  
\begin{itemize}
\item 
$\sig(W)= \sig(\ov W).$
\item $\sig(X) \in p^*$, whenever $X$ is typed and $p=\theta(X)$. 
\end{itemize}
\item An  \emph{entire solution} is a pair $(\alp,\sig)$ where $\alp\colon  M(B,\theta,\mu)\to M(A,\es,\mu_0)$ is an $A$-\morph and $\sig$ is a solution.
\end{itemize}
\end{definition}

\section{Twisted conjugacy and $\del$-periodic words}\label{sec:twconj}

A key step in proving \prref{thm:central} 
is to solve 
 a particular kind of a twisted equation: conjugacy. Let $x,y,z \in A^*$. An easy exercise in combinatorics on words~\cite{har78} 
yields: 
\begin{equation}\label{eq:wconj}
zy=xz \iff \exists r,s\in A^*\,\exists e \in \N: 
 x = rs \wedge y = sr \wedge z=(rs)^e r.
\end{equation}
This fact is crucial in Makanin's classical approach \cite{mak77} to solve (untwisted) word equations. 
Here, we need a variant of (\ref{eq:wconj}) in the twisted environment. 
We say that words $x,y\in {A}^*$ are \emph{twisted conjugate} if there are $f,g,h\in H$ 
and $z\in {A}^*$ such that $zg(y) = h(x)f(z)$. 
We also say that $\abs{x} =\abs{y}$ is the \emph{offset} of the conjugacy. 
A \emph{twisted conjugacy equation} is a  (non-triangular) twisted equation of the form 
\begin{equation}\label{eq:twco}
Z\, (g,Y) = (h,X)\, (f,Z). 
\end{equation}
\begin{proposition}\label{prop:twconj}
Let $\sig$ be a solution of the twisted equation (\ref{eq:twco}) 
such that the offset $\abs{\sig(X)}$ satisfies $1 \leq \abs{\sig(X)} < \abs{\sig(Z)}$. Then there are words 
$r\in {A}^+$, $s\in {A}^*$ and $e,j\in \N$ with $0\leq j < \abs H$
such that $|rs|=\sig(X)$ and 
\begin{equation}\label{eq:twconj}
\sig(Z) = ((rs)f(rs) \cdots f^{\abs H -1}(rs))^e\, f^0(rs) \cdots f^{j-1}(rs)f^{j}(r).
\end{equation}
\end{proposition}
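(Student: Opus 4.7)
Let $x = \sig(X)$, $y = \sig(Y)$, $z = \sig(Z)$, so the equation reads $z \cdot g(y) = h(x) \cdot f(z)$ in $A^*$. Set $n = \abs{x} = \abs{y}$ (so $\abs{g(y)} = n$, since $G$ acts on $A^*$ by length-preserving automorphisms) and $m = \abs{z}$; by hypothesis $1 \le n < m$. My plan is to adapt the classical unfolding $zy = xz \Rightarrow x = rs,\, y = sr,\, z = (rs)^e r$ to the twisted setting: I will peel shifted copies of $h(x)$ off the left of $z$ one at a time, and then use $f^{\abs{G}} = 1$ to regroup the resulting sequence into blocks of length $\abs{G}$.

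The first step is an induction on $i$. Starting from $z_0 = z$, I will show that whenever $\abs{z_i} \ge n$, the word $(f^i h)(x)$ is a prefix of $z_i$, and that setting $z_i = (f^i h)(x) \cdot z_{i+1}$ yields the reduced equation $z_{i+1} \cdot g(y) = (f^{i+1} h)(x) \cdot f(z_{i+1})$. The base case $i = 0$ is exactly the given equation. For the inductive step, the length identity forces $(f^i h)(x)$ to be the first $n$ letters of both sides of $z_i \cdot g(y) = (f^i h)(x) \cdot f(z_i)$, so cancellation gives $z_{i+1} \cdot g(y) = f(z_i) = (f^{i+1} h)(x) \cdot f(z_{i+1})$.

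Since each iteration drops the length by $n$, there will be a smallest $k \ge 1$ with $\abs{z_k} < n$, and at that point $z_k$ is a prefix of $(f^k h)(x)$. Writing $k = e \abs{G} + j$ with $0 \le j < \abs{G}$ and using $f^{\abs{G}} = 1$ gives $(f^k h)(x) = f^j(h(x))$. I would then split $h(x) = rs$ with $\abs{r} = \abs{z_k}$, so that $z_k = f^j(r)$ and $\abs{rs} = n$. Substituting into
\[
z \;=\; h(x) \cdot f h(x) \cdots f^{k-1} h(x) \cdot z_k ,
\]
relabelling $f^i h(x) = f^i(rs)$, and grouping the first $k = e\abs{G} + j$ factors into $e$ complete blocks of length $\abs{G}$ plus a tail of length $j$, should produce exactly the formula~(\ref{eq:twconj}).

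The only delicate point to watch for is ensuring $r \in A^+$. In the generic case $z_k \ne 1$ there is nothing to do, since $\abs{r} = \abs{z_k} \ge 1$. In the degenerate case $z_k = 1$ (which happens exactly when $n$ divides $m$, and forces $k \ge 2$), my plan is instead to set $r = h(x)$ and $s = 1$ and to decompose $k - 1 = e\abs{G} + j$ with $0 \le j < \abs{G}$; the final factor $f^{k-1} h(x) = f^j(r)$ of the unfolding then plays the role of the trailing $f^j(r)$ in the formula. Everything else is routine arithmetic, and no appeal to $\mu$ or to the $NG$-structure is required, since $f$, $g$, $h \in G$ act as length-preserving automorphisms commuting with the involution.
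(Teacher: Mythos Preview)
Your proof is correct and follows essentially the same unfolding argument as the paper's: peel off $f^i h(x)$ from the left of $z$ iteratively until the remainder is shorter than $n$, then regroup using $f^{|G|}=1$. Your treatment is in fact more careful than the paper's, since you explicitly handle the degenerate case $n\mid m$ (where $z_k=1$) by taking $r=h(x)$, $s=1$ and writing $k-1=e|G|+j$, a boundary case the paper's proof glosses over.
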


\begin{figure}[h!]
\begin{center}
\begin{tikzpicture}[scale=.6]

\draw  (0,4.5) -- (17,4.5) -- (17,6.5) -- (0,6.5) -- cycle;
\draw  (0,5.5) -- (17,5.5);
\draw  (15,5.5) -- (15,6.5);
\draw  (2,4.5) -- (2,5.5);
\node[align=left, above] at (7,6.5) {$Z$};
\node[align=left, below] at (9,4.5) {$Z$};
\node[align=left, above] at (16,6.5) {$Y$};
\node[align=left, below] at (1,4.5) {$X$};
\footnotesize
\node[align=left, below] at (1,5.5) {$\uparrow h$};
\node[align=left, below] at (9,5.5) {$\uparrow f$};
\node[align=left, below] at (7,6.5) {$\downarrow 1$};
\node[align=left, below] at (16,6.5) {$\downarrow g$};

\draw  (0,0) -- (17,0) -- (17,2) -- (0,2) -- cycle;
\draw  (0,1) -- (17,1);
\draw  (15,1) -- (15,2);
\draw  (2,0) -- (2,1);
\draw[red,dotted]  (15,1) -- (15,0);
\draw[red,dotted]  (16,1) -- (16,2);
\draw[red, dashed]  (16,1) -- (16,0);
\draw[red, dashed]  (14,1) -- (14,0);
\draw[red, dashed]  (14,1) -- (14,2);
\draw[red, dashed]  (2,2) -- (2,1);
\draw[red, dashed]  (12,1) -- (12,0);
\draw[red, dashed]  (4,2) -- (4,1);
\draw[red, dashed]  (10,1) -- (10,0);
\draw[red, dashed]  (6,2) -- (6,1);
\draw[red, dashed]  (8,1) -- (8,0);
\draw[red, dashed]  (8,2) -- (8,1);
\draw[red, dashed]  (6,1) -- (6,0);
\draw[red, dashed]  (10,2) -- (10,1);
\draw[red, dashed]  (4,1) -- (4,0);
\draw[red, dashed]  (12,2) -- (12,1);
\node[align=left, above] at (16,2) {$Y$};

\tiny
\node[align=left, below] at (1,1) {$h(v)$};

\footnotesize
\node[align=left, below] at (1,0) {$v$};
\node[align=left, above] at (1,2) {$u$};
\node[align=left, above] at (3,2) {$f(u)$};
\node[align=left, above] at (5,2) {$f^2(u)$};
\node[align=left, below] at (3,0) {$u$};
\node[align=left, below] at (5,0) {$f(u)$};
\node[align=left, below] at (7,0) {$f^2(u)$};

\tiny
\node[align=left, above] at (14.5,2) {$f^j(r)$};
\node[align=left, below] at (15,0) {$f^{j-1}(u)$};
\node[align=left, below] at (16.5,0) {$f^{j}(r)$};
\end{tikzpicture}

\caption{Twisted conjugacy}
\label{fig:tw}\end{center}
\end{figure}

\begin{proof}
Let $v=\sig(X)$ and $u = h(v)$. Since $1 \leq \abs{\sig(X)} < \abs{\sig(Z)}$ the word $u$ is a proper nonempty prefix of $\sig(Z)$. If  $2 \abs u \leq \abs{\sig(Z)}$, then $uf(u)$ is a prefix of $\sig(Z)$, and so on. 
Thus, $\sig(Z)$ is a prefix of a word  $uf(u)f^2(u)  \cdots f^{k}(u)$ for some $k\in\N$.
 Next, observe that
$f^{\abs H}(u) = f^{0}(u) = u$ for every word $u\in {A}^*$. Thus,
$$\sig(Z)=[uf(u)f^2(u) \cdots f^{\abs H-1}(u)]^euf(u) \cdots f^{j-1}(u)f^j(r)$$ where $0\leq j<\abs H$, $u=rs$ and the $|r|$ suffix of $Z$ is where the pattern runs out, as illustrated in \prref{fig:tw}.  We then have
$\sig(Y)=g^{-1}f^{j}(sf(r))$.
Hence, the nonempty word $u$ and the length $\abs{\sig (Z)}$ define a unique factorization $u = rs$, integers $0 \leq e$ and $0\leq j < \abs H$ such that $\sig(Z)$ has the desired form above. 
\end{proof}

A word $p$ is called \emph{primitive} if it cannot be written as
$p=r^e$ with $e \geq 2$. In particular, the empty word $1$ is not primitive. It is well known (and easy to see) that a nonempty 
word $p$ is \IFF $p^2$ cannot be written as $p^2 = xpy$ with $x\neq 1$ and $y\neq 1$.  

Let $w,p\in {A}^+$ be nonempty words. 
We say that $w$  \emph{has period} $|p|$ if $w$ is a prefix of $p^{|w|}$. 
In other words, if 
$w=a_1 \cdots a_n$ with $a\in {A}$, then $a_i=a_{i+|p|}$ for all $1\leq i\leq n-|p|$.
A word may have several periods, for example $w=aabaabaa$ has periods $3,6,7,8$. If $|p|$ is the least period of $w$, then $|p|\leq |w|$ and we can choose $p$ to be primitive such that $p\leq w$. For example, $aab\leq 
aabaabaa$ is a primitive prefix and $|aab|=3$. 

\begin{corollary}\label{cor:twconj}
Let $\eps\in \N$, $f,g,h \in H$, and $x,y,z \in A^*$ be words with $1\leq |x| \leq \eps$ and $|z| \geq \abs H\eps$. If we have $zg(y) = h(x) f(z)$, then $z$ has a period of at most $\abs{H}\eps$.

Moreover, let $z=\alpha w \beta$ be any factorization  with $|w|=|x|$.  Then every letter $b$ occurring in  $z$ satisfies $b=f(a)$ for some $f\in H$ and some  
letter $a$ occurring in  $w$. 
\end{corollary}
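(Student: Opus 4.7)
The plan is to reduce both claims to the explicit structural form of $z$ furnished by \prref{prop:twconj}. First I would verify its hypotheses: from $|x|\leq \eps$ and $|z| \geq 10|G|\eps > \eps$ we get $1 \leq |x| < |z|$, so \prref{prop:twconj} applies, yielding $u := rs$ of length $|x|$ together with $e,j \in \N$ such that $z$ is a prefix of $\Pi^{e+1}$, where $\Pi = u\,f(u) \cdots f^{|G|-1}(u)$ has length $|G| \cdot |u| \leq |G|\eps$.

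The first claim then drops out at once: $z$ inherits the period $|\Pi| \leq |G|\eps$ from $\Pi^{e+1}$, and since $|z| \geq 10 |G|\eps$ is at least ten times this period, \prref{def:delpervelos} classifies $z$ as \velo $|G|\eps$-periodic.

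For the ``moreover'' part I would fix a factorization $z=\alpha w \beta$ with $|w|=|u|$. The window $w$ sits inside $\Pi^{e+1}$ starting at some position $k|u|+i$ with $k \geq 0$ and $0 \leq i < |u|$. Since each consecutive length-$|u|$ block of $\Pi$ is of the form $f^m(u)$ (indices taken modulo $|G|$) and every element of $G$ acts letter-wise on $A^*$, the window $w$ decomposes as a length-$(|u|-i)$ suffix of $f^k(u)$ followed by a length-$i$ prefix of $f^{k+1}(u)$ (degenerating to $w=f^k(u)$ when $i=0$). In either case, for every index $j$ with $0\leq j<|u|$ there is some $g_j\in G$ such that $g_j(u[j])$ appears as a letter of $w$.

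To conclude, I would take an arbitrary letter $b$ of $z$: because $z$ is a prefix of $\Pi^{e+1}$ we have $b = f^m(u[j])$ for some exponent $m$ and index $j$, and the previous step then gives $b = (f^m g_j^{-1})(a)$ where $a = g_j(u[j])$ is a letter of $w$, which is exactly the required form. The only place where a little care is needed is the boundary case $i=0$ versus $i>0$ in the two-block decomposition of $w$: one must check that in both regimes every residue $j$ is reached by the letters of $w$. Apart from this minor case analysis, every step is routine once \prref{prop:twconj} is in hand.
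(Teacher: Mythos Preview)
Your proposal is correct and follows essentially the same approach as the paper's proof: both apply \prref{prop:twconj} to obtain the explicit block decomposition $z=\Pi^e\cdot f^0(rs)\cdots f^{j-1}(rs)f^j(r)$, read off the period $|\Pi|\leq |G|\eps$, and for the second claim observe that any length-$|x|$ window $w$ of $z$ lies inside two consecutive blocks $f^i(rs)f^{i+1}(rs)$, so that each position of $rs$ is represented in $w$ by some $G$-translate. Your write-up is slightly more explicit about the indexing and the $i=0$ boundary case, but the argument is the same.
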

\begin{proof}
By \prref{prop:twconj}  we have \begin{align*}z &= ((rs)f(rs) \cdots f^{\abs H -1}(rs))^e\, f^0(rs) \cdots f^{j-1}(rs)f^{j}(r)\end{align*}
where  $|f^i(rs)|=|x|\leq \eps$ for all $i \geq 0$. Hence, $z$ has a period $$|(rs)f(rs) \cdots f^{\abs H -1}(rs)|\leq \abs H\eps.$$ 

For the second claim, if $z=\alpha w \beta$  
with $|w|= |x|=|rs|$ then $w$ is a factor of $f^i(rs)f^{i+1}(rs)$ for some $i\geq 0$. If we  write $rs=a_1\dots a_{|x|}$, then any letter $b$ in $z$ satisfies $b=f^j(a_\ell)$. 
Let $\iota\in\{i,i+1\}$ so that $f^\iota(a_\ell)$ is a letter in $w$, then 
$b=f^j(a_\ell)=f^j(f^{-\iota}(f^\iota(a_\ell)))=f^{j-\iota}(a)$ for some $j\geq 0$.
\end{proof}

\begin{definition}\label{def:delpervelos}
We say that a word $w$ is  \emph{$\del$-periodic} if
it has some period of length at most  $\del$. 
A \delper word $w$ is called  
\emph{\llong \delper} if $\abs w \geq 3\del$, and \emph{\velo \delper} if $\abs w \geq 10\del$.
\end{definition}
For example, $aabaaabaaab$ is 4-periodic but not long 4-periodic.
An important property of \delper words is the following. 
\begin{lemma}\label{lem:sydney}
Let $w$ be a
\delper word and $w= p^e r=q^f s$
such that $p,q$ are primitive $|p|\leq |q| \leq \del$, $1 \neq r\leq p$, 
$1 \neq s\leq q$, and $\abs w \geq 2\del$. Then 
$p=q$, $e=f\geq 1$, and $r = s$.  
\end{lemma}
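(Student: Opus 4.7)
The plan is to reduce the statement to the Fine and Wilf periodicity theorem. The key observation is that $w$ admits two periods, namely $|p|$ and $|q|$, both bounded by $\delta$, while $|w| \geq 2\delta \geq |p|+|q|$ comfortably exceeds the Fine--Wilf threshold $|p|+|q|-\gcd(|p|,|q|)$.

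First I would verify that $e \geq 1$ and $f \geq 1$. Since $w = p^e r$ with $|r| \leq |p|$, we have $|w| \leq (e+1)|p|$, and combined with $|w| \geq 2\delta \geq 2|p|$ this forces $e \geq 1$; the argument for $f$ is symmetric. Consequently, $w$ is a prefix of $p^{e+1}$, so $w$ has period $|p|$; likewise $w$ has period $|q|$. Applying Fine and Wilf, $w$ has period $d := \gcd(|p|,|q|)$.

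Next I would leverage primitivity twice. Since $e \geq 1$, the word $p$ itself is a prefix of $w$ and therefore has period $d$; primitivity of $p$ rules out $d < |p|$, so $d = |p|$ and hence $|p|$ divides $|q|$. The same reasoning applied to $q$ (which is also a prefix of $w$, as $f \geq 1$) shows $q$ has period $|p|$; if $|p| < |q|$, then $q$ would equal $p^{|q|/|p|}$, contradicting its primitivity. Thus $|p| = |q|$, and since both are prefixes of $w$ of equal length, $p = q$.

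Finally, from $p^e r = p^f s$ with $1 \leq |r|, |s| \leq |p|$, comparing lengths gives $(e-f)|p| = |s|-|r|$, whose right-hand side lies strictly in the open interval $(-|p|,|p|)$. This forces $e = f$ and then $|r|=|s|$, after which $r = s$ because both are prefixes of the same word. The only substantive step is the application of Fine and Wilf; the bookkeeping on lengths and the use of primitivity are short, so I do not anticipate a serious obstacle.
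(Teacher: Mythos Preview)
Your proof is correct. Both your argument and the paper's reach the conclusion $|p|=|q|$ (from which everything follows), but by different routes.

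You invoke the Fine--Wilf theorem: the two periods $|p|$ and $|q|$ of $w$, together with $|w|\geq 2\delta\geq |p|+|q|$, force $w$ to have period $\gcd(|p|,|q|)$, and then primitivity of $p$ and $q$ pins down $|p|=|q|$. The paper instead uses only the characterization of primitivity it stated just before this lemma, namely that a primitive word $q$ cannot occur as a factor of $q^2$ at a nontrivial offset. Assuming $|p|<|q|$, the paper observes that $pq$ is a prefix of $w$ (using period $|p|$) and that $w$ is a prefix of a power of $q$; hence $pq$ is a prefix of $qq$, which exhibits $q$ inside $q^2$ at offset $|p|$, a contradiction.

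The paper's argument is self-contained relative to the tools already introduced in the text and avoids citing Fine--Wilf; your argument is the textbook route and makes the structure transparent. Either is perfectly acceptable here, and your length bookkeeping in the final paragraph is cleaner than what the paper sketches.
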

\begin{proof}
The assertion is clear for $|p|= |q|$. Hence we may assume that 
$p$ is a proper prefix of $q$. Since $q\leq w$ we conclude $q\leq p^\del$. Since $\abs w \geq 2\del$, and $\abs p \leq \abs q \leq \del$ we see 
$pq \leq w \leq q^{\abs w}$. Thus $q$ occurs as a factor inside $qq$: we have 
$pqs=qq$ for some~$s$. Since $1 \leq \abs p < \abs q$, this contradicts the primitivity of $q$. 
\end{proof}

Let $u$ be a prefix (resp.~factor, resp.~suffix) of some nonempty word $w$. We say that $u$ is a \emph{maximal \delper
prefix (resp.~factor, resp.~suffix)} in $w$ if we cannot extend the occurrence of the factor $u$ inside $w$ by any letter to the right or left, to see a \delper word.

\section{The ambient infinite automaton $\cT$}\label{sec:nfaT}
The states of the NFA $\cAcS$ (we are aiming  for in  \prref{thm:central}) are \exes and transitions are certain labeled arcs between states which modify the extended equations. Before we construct $\cAcS$ let us define an infinite automaton $\cT$. It will contain $\cAcS$ as a finite subautomaton. We show that $\cT$ is \emph{sound}: this means in the notation of \prref{thm:central}
\begin{equation}\label{eq:soundT}
\set{(h(d_{1})\lds h(d_{k})) \in C^* \times \cdots \times C^* }{h \in L(\cT)} \sse \cSol(\cS).
\end{equation}
This implies that all \subauta of $\cT$ are sound, too. 
The set of states in $\cT$ is the set of  \exe{s} according to \prref{def:exe}, see \prref{sec:infinite}. There are two kinds of \tras:
a \emph{\subst \tra} transforms the variables; 
a \emph{compression \tra} affects the constants, but not the variables, see \prref{sec:tritra}.

If $(W,B,\cX,\theta,\mu)\arc h (W',B',\cX',\theta',\mu')$ is a \tra,
then its label $h$ is a \morph $h\colon M(B',\cX',\theta',\mu') \to M(B,\cX,\theta,\mu)$  (in the opposite direction of the arc) which is specified by a mapping $h\colon \Del \to B^*$ where $\Del\sse B'$ is some subset (possibly empty) of constants with $\Del \cap A= \es$. We assume that such a map $h$ extends to a $A\cup\cX'$-\morph $h\colon M(B',\cX',\theta',\mu') \to M(B,\cX,\theta,\mu)$ 
by leaving all letters in $(B'\cup \cX')\sm \set{f(d)}{d \in \Del \cup \ov \Del, f \in H}$ invariant. Since $h(\Del) \sse B'^*$, the restriction of $h$ also defines 
a \morph $h\colon M(B',\theta',\mu') \to M(B,\theta,\mu)$. Note that we use the same 
letter $h$ for both \morph{s}. There will be no risk of confusion. 

Since $B'\sse C$, the  \morph $h$ also induces an \Endo of $C^*$ which respects the involution assuming
$h(c) =c$ for all $c \in C\sm B'$. However, outside $B'$ neither the action of $H$ nor the value of $\mu$ is defined, so $C^*$ is not an \HN. It is simply a free monoid with \invol, and we can read the label always as an \Endo of the free monoid with \invol $C^*$.

New constants appear only by compression. If  a word $w$ is replaced a letter $c$ by specifying $h(c)= w$, then we will automatically set $\mu(c) =\mu(w)$,
$h(\ov c) =\ov w$, $h(f(c))=f(h(c))$, and hence: $f(c) = c\iff f(w)=w$ for all $f\in H$. 
By definition of $N$, the second component in  $\mu(w)$ is a word  $u\in A^*$ of length
at most $\log {\abs H}$ such that the stabilizer $H_w$ satisfies 
$H_c = H_w= H_u$. \Ip we have an efficient test whether  $f(c)=c$ for all
$f\in H$: we just check that $f(a)=a$ for all letters $a$ which appear in the word $u$.
The crucial observation is that whenever 
$$(W_s,B_s,\cX_s,\theta_s,\mu_s)\arc {h_{s+1}} \cdots \arc {h_t} 
(W_{t},B_{t},\cX_{t},\theta_{t},\mu_{t})$$
is a labeled path and $w \in B_{t}^*$ is word, then $h=h_{s+1} \cdots h_t$ can be viewed either as a \morph $h\colon M(B_{t},\theta_{t},\mu_{t}) \to M(B_{s},\theta_{s},\mu_{s})$ or as an \Endo of $C^*$. If we have $w \in B_{t}^*$, then $h$ defines a 
word  $h(w) \in B_{s}^*$ and the corresponding element 
 $h(w) \in M(B_{s},\theta_{s},\mu_{s})$. 
 By $\eps$ we denote the identity endomorphism on $C^*$. Then $\eps$ appears as the label of 
 \tra{s} $(W,B,\cX,\theta,\mu)\arc \eps (W',B',\cX',\theta',\mu')$ where
 $h\colon M(B',\theta',\mu') \to M(B,\theta,\mu)$ is a \morph with $h(a) = a$ for all $a\in B'$.
 For example, the label $\eps$ might appear when $B'\sse B$ or 
 $\theta' \sse \theta$, etc.

\subsection{States}\label{sec:infinite}
We define the states of the $\cT$ as the set of extended equations  according to \prref{def:exe}. Thus, every state $E$ is of the form $E=(W,B,\cX,\theta,\mu)$.

\subsubsection{Initial state.} 
The 
\emph{initial state} is 
$\Einit= (\Winit,A,\cX,\es,\mu_0)$. 

\subsubsection{Final states.}\label{sec:finalstatedefn}
A state 
$(W,B,\es,\es,\mu)$ is \emph{final} if 
\begin{enumerate}
\item $W= \ov W$ and uses  no variables. 
\item The word $W$ has a prefix of the form $\#d_{1}\# \cdots \# d_{k} \#$ where 
$d_i$ are the \emph{distinguished} letters mentioned in \prref{thm:central}. 
\end{enumerate}

\subsection{Transitions}\label{sec:tritra}
We denote a \tra as 
as $E\arc{h}E'$
and for both kinds, \subst{s} and \comp{s}, we put some additional length restrictions on $h$.
For example, we allow $h(c)=1$ for a letter $c$ only if $E'$ is a final state.
Thus labels on paths not ending in a final state are never length decreasing \morph{s}.
Moreover, we require that $h(c)$ is not too long. If $h$ is specified by a set $\Del'$, then we require $\sum_{c\in \Del'} |h(c)|< |W|$ where  $E=(W,B,\cX,\theta,\mu)$. These length restrictions are not used in the proof  of the soundness result \prref{prop:bf}. We need them when proving completeness for a finite subautomaton of $\cT$. 
\subsubsection{Substitution \tra{s}}\label{sec:substtras}
\begin{definition}\label{def:substr}
A \subst \tra is denoted as 
$$(W,B,\cX,\theta,\mu)\arc h (\tau(W),B',\cX',\theta',\mu').$$
We must have  $B\sse B'$ and we require that the \tra is
defined by a $B$-\morph
$\tau\colon M(B,\cX,\theta,\mu)\to M(B',\cX',\theta',\mu')$
 and a $B$-\morph 
$h\colon M(B',\cX',\theta',\mu')\to  M(B,\cX,\theta,\mu)$
such that $|h(b)|=1$ for all $b \in B'$. \Ip $h$ is length preserving.

In the case that  some variable is typed in the source node $(W,B,\cX,\theta,\mu)$, that is 
   $\theta\neq\es$, then we add the following restrictions: 
\begin{itemize}
\item $\cX' \sse \cX$. (Thus, for $\theta\neq\es$ the set of variables cannot increase.)
\item If $X\in \cX'$, then $\theta(X)=\theta'(X)$.
\Ip $\theta(X)$ is defined \IFF $\theta'(X)$ is defined.
\item If $\theta(X)$ is not defined, then $\tau(X)=X$. 
\item If $\theta(X)$ is defined, then $\tau(X) \in\theta(X)^*\cup  \theta(X)^*\, X\,\theta(X)^* $. 
\end{itemize}
\end{definition}

We say that a \subst \tra is \emph{special} if $B=B'$. This implies that 
the label $h$ is is the identity on $M(B,\theta,\mu)$; and therefore the label will be 
$h=\eps=\id{C^*}$. Later it  would be enough to only consider special \subst \tra{s}. However this would not simplify the following proof. 

\begin{lemma}\label{lem:varsub}
Let $E=(W,B,\cX,\theta,\mu)\arc h (\tau(W),B',\cX',\theta',\mu')=E'$ be  a \subst \tra.
If $\sig'$ solves $E'$ 
 then 
 $\sig=h \sig'\tau $ solves $E$. 
 In particular, if $\alp\colon M(B,\theta,\mu)\to M(A,\es,\mu_0)$ is an $A$-\morph, then $(\alp,\sig)$ and $(\alp h,\sig')$ are entire solutions with 
$\alp\sig(W)=\alp h  \sig'(W')$.
\end{lemma}

\begin{proof} Recall by \prref{def:esolu} to prove  $\sig=h \sig'\tau $ solves $E$ we must show two things: $\sig(W)= \sig(\ov W)$ and 
whenever $X$ is typed and $p=\theta(X)$ we must have  $\sig(X) \in p^*$. 

We begin by checking  that $\theta\neq \es$ implies $h\sig'\tau(X)\in \theta(X)^*$ 
for all typed variables.
Consider a typed variable $X\in \cX$. The first case is: 
$\tau(X) \in \theta(X)^*X \theta(X)^*$. Hence $X\in \cX'$. By definition, $\theta'(X)$ is defined and $\theta(X)= \theta'(X)=p\in B^*$.
 Then $\tau(X) \in p^*Xp^*$, too. Hence, $\sig'\tau(X) \in p^*$ because every \solu $\sig'$ has to satisfy $\sig'(X) \in p^*$. Since $p\in B^*$ and $h$ is a $B$-\morph we have $h(p)=p$. Therefore $h\sig'\tau(X)\in \theta(X)^*$ in the first case. 
The second case is $\tau(X) \in p^*$ where $p=\theta(X)\in B^*$. Again, we can conclude 
$h\sig'\tau(X)\in p^*$. Thus, in both cases: whenever $X\in \cX$ is typed, then 
$h\sig'\tau(X)\in \theta(X)^*$.

Since $h$, $\sig'$, and $\tau$ are $B$-\morph{s}, so is their composition 
$h\sig'\tau$. Since $\sig'$ is a \solu of $W'=\tau(W)$, we have
$\sig'(W')=\sig'(\ov{W'})$. Hence, $\sig'\tau(W)=\sig'\tau(\ov{W})$ since $\ov{\tau(W)}=\tau(\ov W)$ because 
$\tau$ respects the \invol. It follows
that $h\sig'\tau(W)=h\sig'\tau(\ov{W})$, so $\sig(W)= \sig(\ov W)$. Thus, $h\sig'\tau$ is a \solu at ${E}$. As a consequence, $(\alp,h\sig'\tau)$ and
  $(\alp h,\sig')$ are both entire \solu{s} because  $h$ is a $B$-\morph and $A\sse B$. 
\begin{center}
\begin{tikzpicture}[
	xscale=6, yscale=2			
]

\path (0,1) node (1) {$M(B',\cX',\theta',\mu')$};
\path (-1,1) node (2) {$M(B,\cX,\theta,\mu)$};
\path (0,0) node (3) {$M(B',\theta',\mu')$};
\path (-1,0) node (4) {$M(B,\theta,\mu)$};

\draw [->, >=latex] (2) -- (1) node[midway, above] {$\tau$};
\draw [->, >=latex] (1) -- (3) node[midway, right] {$\sigma'$};
\draw [->, >=latex] (2) -- (4) node[midway, right] {$\sigma$};
\draw [->, >=latex] (3) -- (4) node[midway, above] {$h$};

\end{tikzpicture}
\end{center}

\end{proof}

\subsubsection{Compression  transitions}\label{sec:cptra}
Compressions are defined only if $\cX=\cX'$. They leave the variables invariant, but we encounter both situations  $B\sse B'$ or $B'\sse B$.
However, in case that $\theta\neq \es$ the situation is more subtle than for \subst{s}, and we need again technical restrictions in order to guarantee soundness. 

\begin{definition}\label{def:comcontra}
A  \emph{\comp \tra} 
$$(W,B,\cX,\theta,\mu)\arc h (W',B',\cX,\theta',\mu')$$ 
 is defined in $\cT$ if 
$h\colon M(B',\cX,\theta',\mu')\to M(B,\cX,\theta,\mu)$ is  an $(A\cup \cX)$-\morph such that the following conditions hold. 
\begin{itemize}
 \item We have $W=h(W')$
\item $h(b')$ can be written as a word in $B^*$ for every $b'\in B'$ and $\abs{h(c)}\geq 1$ for all $c\in B'$ unless ${E}'=(W',B',\cX,\theta',\mu')$ is a final state. 
\item  $h$ is specified by a mapping $h\colon \Del'\to B^*$ with $\Del'\sse B'$ 
such that 
$$\sum_{c\in \Del'} |h(c)|< |W|.$$
\item A variable $X$ is typed using $\theta'$ \IFF it is typed using $\theta$. 
\item There is some $e\geq 1$ such that  for all typed variables we have
 $$h(\theta'(X)) = \theta(X)^e.$$
\end{itemize}
\end{definition}
Note that for a given $(A\cup \cX)$-\morph $h\colon  M(B',\cX,\theta',\mu')\to M(B,\cX,\theta,\mu)$ the conditions 
 to be a \comp \tra are effective.

\begin{lemma}\label{lem:contr}
Let ${{E}}= (h(W'),B,\cX,\theta,\mu) \arc{h} (W',B',\cX,\theta',\mu')= {{E}}'$ be a \comp and  $\sig'$ be a \solu at ${E}'$. Then there exists a  $B$-\morph $$\sig\colon M(B,\cX,\theta,\mu)\to M(B,\theta,\mu)$$ such that $h\sig'(X)= \sig(X)$ for all $X\in \cX$.
The $B$-\morph $\sig$ satisfies the following conditions. 
If $\alp\colon M(B,\theta,\mu)\to M(A,\es,\mu_0)$ 
 is an $A$-\morph, then 
 first, $(\alp,\sig)$ is an entire \solu at ${E}$ and second, 
  $(\alp h,\sig')$ is an entire \solu at ${E}'$. Moreover, $\alp\sig(W) = \alp h\sig'(W').$
\end{lemma}

\begin{proof}
Define $\sig(X) = h\sig'(X)$ for all variables and $\sig(b) = b$ for all $b\in B$. 
This defines a $B$-\morph $\sig\colon M(B,\cX,\es,\mu)\to M(B,\theta,\mu)$ since 
$M(B,\cX,\es,\mu)$ is a free monoid. (There is no type yet on the left.) Let us  show first that
$(x,y)\in \theta$ implies $\sig(x)=\sig(y)$ in the monoid $M(B,\cX,\theta,\mu)$.
That is, $\sig$ induces a $B$-\morph (which we also denote  by $\sig$) 
$\sig\colon M(B,\cX,\theta,\mu)\to M(B,\theta,\mu)$.

For $(x,y)\in \theta$ with $x,y\in B^*$ the assertion 
$\sig(x)= \sig(y)$ is trivial because $\sig$ leaves $B^*$ invariant. 
Thus, it is enough to consider a defining relation of the form 
$(Xp,\, pX)\in \theta$ where $p=\theta(X)\in B^*$. Because of \prref{def:comcontra} we  know that  $X$ is typed on the right hand side $ M(B',\cX,\theta',\mu')$, too. Let 
$q=\theta'(X)\in B'^*$.
Thus, $h(q)= p^{e}$ for some $e\geq 1$ according to the last condition in \prref{def:comcontra}. 
Since $\sig'(X)=q^{\ell}$ for some  $\ell\geq 0$,  
we conclude $h\sig'(X) = p^{e\ell}$. Hence, whenever $(Xq,\, qX)\in \theta'$, then 
$h\sig'(Xp) = p^{1+e\ell}= h\sig'(pX)$ in $M(B,\theta,\mu)$  
since
$(Xp,\, pX)\in \theta$.

So far we have shown that $\sig$ is a well-defined \morph such that $\sig(X)= h \sig'(X)$. This implies $\sig(h(X))= h \sig'(X)$ for all variables. 
 For a constant $b\in B$ we have 
 $\sig(h(b))= h(b) = h(\sig'(b)).$
 Hence $\sig h= h \sig'$ and this means that the diagram in \prref{fig:hsigsigh} commutes.
 \begin{figure}[h!]
 \begin{center}
\begin{tikzpicture}[
	xscale=6, yscale=2			
]

\path (0,1) node (1) {$M(B',\cX,\theta',\mu')$};
\path (-1,1) node (2) {$M(B,\cX,\theta,\mu)$};
\path (0,0) node (3) {$M(B',\theta',\mu')$};
\path (-1,0) node (4) {$M(B,\theta,\mu)$};

\draw [->, >=latex] (1) -- (2) node[midway, above] {$h$};
\draw [->, >=latex] (1) -- (3) node[midway, right] {$\sigma'$};
\draw [->, >=latex] (2) -- (4) node[midway, right] {$\sigma$};
\draw [->, >=latex] (3) -- (4) node[midway, above] {$h$};
\end{tikzpicture}
\caption{$h\sig=\sig' h$.}
\label{fig:hsigsigh}\end{center}
\end{figure}
The morphism $h\colon M(B',\theta',\mu') \to M(B,\theta,\mu)$ in \prref{fig:hsigsigh} denotes the restriction of the morphism $h\colon  M(B',\cX',\theta',\mu') \to M(B,\cX,\theta,\mu)$, too.
Let $W=h(W')$ and hence, $\ov W=h(\ov {W'})$.
In order to see that $(\alp,\sig)$ is an entire \solu at ${E}$ we use $\sig h= h \sig'$ and we content ourselves to consider the following line of equations:
$$\sig(W) = \sig h(W')= h \sig'(W') = h \sig'(\ov{W'})= \sig h(\ov{W'}) = 
\sig(\ov{W}).$$
In particular, $\alp\sig(W) = \alp\sig h(W') =\alp h\sig'(W')$.  
It is also clear that  $(\alp h,\sig')$ is an entire \solu at ${E}'$
since $h$ leaves $A$ invariant.
\end{proof}

\begin{proposition}\label{prop:bf}
Let $E_{0}\arc{h_1} 
\cdots \arc{h_{t}} E_t$
be a path in $\cT$ of length $t$, where $E_0= (\Winit, A, \cX,\es ,\mu_0)$ 
is  an initial and $E_t=(W,B,\es,\es,\mu)$ is a final {state}. 
Then $E_{0}$ has an entire solution $(\id{A^*},\sig)$ with
 $\sig(\Winit)= h_1 \cdots h_t(W)$. 
\Ip for $X\in \cX$ we have 
$
\sig(X) = h_1 \cdots h_t(d_{X})
$; and $\cT$ is sound in the sense of (\ref{eq:soundT}).
\end{proposition}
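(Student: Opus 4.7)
The plan is to proceed by reverse induction on $i$ from $t$ down to $0$, producing a solution $\sig_i$ of each intermediate extended equation $E_i = (W_i, B_i, \cX_i, \theta_i, \mu_i)$ together with the identity $\sig_i(W_i) = h_{i+1} \cdots h_t(W)$, where the right-hand side is viewed in $M(B_i,\theta_i,\mu_i)$ via the composite \morph, and the empty product is the identity on $C^*$. The base case $i = t$ is immediate: since $E_t$ is final, $\cX_t = \es$ and $W_t = W = \ov W$, so the unique solution with empty domain of variables works and $\sig_t(W_t) = W$.

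For the inductive step, I split on the type of the arrow $E_{i-1} \arc{h_i} E_i$. If it is a \subst, then $h_i = \eps$ and there is an underlying $B$-\morph $\tau$ with $W_i = \tau(W_{i-1})$; Lemma~\ref{lem:sonyctraF} lifts $\sig_i$ to $\sig_{i-1} = \sig_i \tau$, yielding $\sig_{i-1}(W_{i-1}) = \sig_i(W_i) = h_{i+1} \cdots h_t(W)$, which matches the invariant because $h_i = \eps$ contributes nothing. If it is a compression, then $W_{i-1} = h_i(W_i)$ and Lemma~\ref{lem:sonycomF} defines $\sig_{i-1}$ by $\sig_{i-1}(X) = h_i \sig_i(X)$ and $\sig_{i-1}(b) = b$, giving $\sig_{i-1}(W_{i-1}) = h_i \sig_i(W_i) = h_i h_{i+1} \cdots h_t(W)$, again matching the invariant.

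When the induction reaches $i = 0$, we obtain $\sig := \sig_0$ solving $(\Winit, A, \cV, \es, \mu_0)$, and since $\theta_0 = \es$ and $B_0 = A$ this solution takes values in $M(A, \es, \mu_0) = A^*$. Taking $\alp = \id{A^*}$ produces the desired entire solution $(\id{A^*}, \sig)$ with $\sig(\Winit) = h_1 \cdots h_t(W)$. For the ``in particular'' assertion, I will compare prefixes: $\Winit$ begins with $\#X_1\#\cdots\#X_k\#$ and the final $W$ begins with $\#c_{X_1}\#\cdots\#c_{X_k}\#$. Because every \morph along the path is an $A$-\morph and hence fixes $\#$, the equation $\sig(\Winit) = h_1 \cdots h_t(W)$ decomposes termwise into $\sig(X_j) = h_1 \cdots h_t(c_{X_j})$ for each $j$. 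Soundness~(\ref{eq:soundF}) is then immediate: any $h \in L(\cF)$ arises from such a path, and the resulting $\sig$ solves the original system $\cS$ because $\sig(\Winit) = \sig(\ov{\Winit})$ unpacks, by construction of $\Winit$, into $\sig(U_i) = \sig(V_i)$ for every local \equ.

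The main technical hurdle I anticipate is the bookkeeping around types along compression arrows: one has to verify that the type-compatibility clauses built into a compression transition (notably $h(\theta'(X)) \in \theta(X)^*$ and that $(Xa,aY)\in\theta$ implies $(Xa,aY)\in\theta'$) mesh with the inductive invariant, so that the hypotheses of Lemma~\ref{lem:sonycomF} are genuinely available at each step. These conditions are however baked into the definition of the states and \tras of $\cF$, so once a legitimate path has been fixed they hold along every arrow and the induction runs cleanly.
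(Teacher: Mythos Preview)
Your proof is correct and follows essentially the same approach as the paper: the paper's proof is the two-line sketch ``$E_t$ has the unique solution $\sig_t=\id{B^*}$; by the lemmas above we obtain $\sig$ at $E_0$ with $\id{A^*}\sig(\Winit)=\id{A^*}h_1\cdots h_t\id{B^*}(W)$,'' and your reverse induction through Lemmas~\ref{lem:sonyctraF} and~\ref{lem:sonycomF} is exactly what that sketch unpacks to. Your additional prefix-matching argument for $\sig(X_j)=h_1\cdots h_t(c_{X_j})$ is a detail the paper leaves implicit; to make it airtight you could note that neither $\sig(X_j)$ nor $h_1\cdots h_t(c_{X_j})$ can contain $\#$ (since $\mu(\#)=0$ while the $\mu$-values of the $x_i$ and $c_{X_j}$ are nonzero and the path morphisms preserve $\mu$), which guarantees the $\#$-separators line up.
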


\begin{proof}
Since $E_t$ is final, it has a unique solution $\sig_t= \id{B^*}$. By the lemmas above, we obtain a solution $\sig$ at $E_0$ such that  $\id{A^*}\sig(\Winit) = \id{A^*}h_1 \cdots h_t\id{B^*}(W)$. Hence, $(\id{A^*},\sig)$ is an entire  solution as desired.
\end{proof}

\section{The intermediate automaton  $\cF$}\label{sec:cF}
\prref{prop:bf} states that the large automaton $\cT$ is sound. This property cannot be destroyed by removing states or transitions. That is, every \subauto of 
$\cT$ is sound, too. 
We define a subautomaton $\cF$ of $\cT$  as follows. All \exes $\EE$ are states of $\cF$, so  the state set is the same infinite set as for $\cT$.
However, for \tra{s} we are more restrictive. To define \tras, let us first  define a \emph{weight} for equations and states. The definition is tailored that
all \comp \tras and certain \subst \tras reduce the weight of the state. 
\begin{definition}\label{def:weightexe}
Let  $E = (W,B,\cX,\theta,\mu)$ be an \exe where (as usual) $W\in (A\cup \cY)^*$ is represented as a word.  The
\emph{weight} of the equation  $\Abs{W}$ is defined by 
\begin{equation}\label{eq:weightW}
\Abs{W} =
\begin{cases}
              \abs W + {30}\del\sum_{Y\in \cY}|W|_Y & \text{ if $E$ is not final,}\\
         0 & \text{ otherwise, that is: $E$ is final.}   
        \end{cases}
\end{equation}
 The
\emph{weight} of the state
$\Abs{E}$ is a pair of natural numbers
 $\Abs{E}= (\Abs W,\abs B)$. 
\end{definition}

For $\ell\in \N$ we  order tuples in $\N^\ell$ lexicographically. For example 
$(0,42) < (1,0)$, but $(1,0,42) > (0,10,100)$; and we use the fact that there are no infinite descending chains in $\N^\ell$.   
Consider any \tra $\EE  \arc h \EE'$ in $\cT$.
Then we always have $\abs\EE< \abs{\EE'}$ unless the transition is a \subst \tra 
where at least one variable that appears in $W$ pops out a constant. 

\begin{remark}\label{rem:weightexe}
The definition of $\Abs{W}$ is invariant under the word representation of $W$. This follows because $\sum_{Y\in \cY}|x|_Y= \sum_{Y\in \cY}|y|_Y$ for all $(x,y)\in \theta$.
Second, the advantage to use the weight $\Abs{W}$ in $\Abs{E}$ (instead of 
using the more straightforward choice  of $(\abs W,\abs B)$ for $\Abs{E}$) is that following a \subst \tra, which does nothing but replace variables $X$ by $\sig(X)$ for $\abs{\sig(X)}\leq {30}\del$, leads to a state of smaller weight. 
\end{remark}

A \tra $(W,B,\cX,\theta,\mu)  \arc h  (W',B',\cX',\theta',\mu')$ in $\cT$ belongs to 
$\cF$ \IFF the following properties are satisfied. 
\begin{itemize}
\item If $(W,B,\cX,\theta,\mu)  \arc h  (W',B',\cX',\theta',\mu')$ is a \subst \tra, then 
$W'= \tau(W)$, $B=B'$, and $h=\eps$. 
\item If $E= (W,B,\cX,\theta,\mu)  \arc h  (W',B',\cX',\theta',\mu')=E'$ is a \comp \tra, then $W=h(W')$ and  $\Abs {E'} < \Abs {E}$. 
\end{itemize}

The focus for the remaining part of the proof is  on completeness. 
A \subauto $\cA$ of $\cT$ is called \emph{complete} if it holds:
\begin{equation}\label{eq:compA}
\cSol(\cS) \sse \set{(h(d_1)\lds h(d_k)) \in C^* \times \cdots \times C^* }{h \in L(\cA)}.
\end{equation}
Since every \subauto of $\cT$ is also sound in the sense (\ref{eq:soundT}) we see that every complete \subauto  is sound and complete. 

\begin{proposition}\label{prop:cAsounder}
Let $\cA$ be a trim, finite \subauto of $\cF$.
If $\cA\neq \es$, then $\cS$ has at least one solution. If $\cA$ contains a directed cycle, then $\cS$ has infinitely many solutions. 
Moreover, if $\cA$ is complete, 
then 
the converse of both assertions is true.
\end{proposition}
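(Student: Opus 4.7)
The plan is to treat the three assertions separately, leaning on Proposition \ref{prop:bf} and on the structural properties imposed by the trimming construction.

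For the first implication, the argument is essentially by definition of ``trimmed''. If $\cA\neq\es$, then some state survives trimming, which by construction means it lies on a path from an initial state to a final state. Since $\cA\sse\cF$, this path is also an accepting path in $\cF$, and Proposition \ref{prop:bf} directly produces an entire solution of $\Winit$, hence of $\cS$.

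The second implication is the hard one. Suppose $\cA$ contains a directed cycle through some state $E$. Trimmedness of $\cA$ guarantees an accepting path $E_0\arc{h_1}\cdots\arc{h_s}E\arc{h_{s+1}}\cdots\arc{h_t}E_t$ passing through $E$, and we can insert $k$ copies of the cycle to obtain accepting paths $\pi_k$ for every $k\geq 0$. Proposition \ref{prop:bf} gives a solution $\sig^{(k)}$ for each $\pi_k$. The core of the argument is to show these solutions are pairwise distinct. My plan is a weight/length analysis using the trim conditions: every compression arc in $\cA$ is non-erasing and strictly decreases the weight $(|W|,|B|)$, while every substitution arc in $\cA$ either strictly decreases weight or has some $\tau(X)$ containing a constant letter. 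Since going around the cycle returns to the same weight, the cycle must contain at least one arc of the ``growth'' type. I would then show that the composite morphism $g=g_1\cdots g_r$ along the cycle, viewed as an endomorphism of $C^*$, is either strictly length-increasing on some letter reachable from $c_{X_j}$ via $h_{s+1}\cdots h_t$ (when a compression is present in the cycle), or else the substitution-only case is excluded because pure renamings would have been removed by the trim condition. In either case, $|\sig^{(k)}(\Winit)|$ grows without bound in $k$, giving infinitely many pairwise distinct solutions. I expect the main obstacle to be ruling out ``silent'' cycles, i.e., showing that no cycle in $\cA$ can consist entirely of substitution arcs whose net compositional effect on $C^*$ is the identity; this needs a careful combinatorial argument using both the homogeneity of $\theta$ and the fact that a substitution-only cycle with all arcs kept in $\cA$ would force some variable to be permanently extended by a constant, which is incompatible with returning to the original state $E$.

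For the converses, completeness does all the work. If $\cSol(\cS)\neq\es$, any tuple in $\cSol(\cS)$ equals $(h(c_{X_1}),\ldots,h(c_{X_k}))$ for some $h\in L(\cA)$, so $L(\cA)\neq\es$ and therefore $\cA\neq\es$. If $\cA$ has no directed cycle, then $\cA$ is a finite DAG, so $L(\cA)$ is finite; completeness then bounds $|\cSol(\cS)|\leq |L(\cA)|<\infty$. Contrapositives yield the desired implications: no solutions forces $\cA=\es$, and infinitely many solutions forces a cycle in $\cA$.
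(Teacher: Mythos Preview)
Your approach is correct and is essentially the paper's. The paper's proof is terser in two places. First, instead of arguing that ``going around the cycle returns to the same weight,'' it simply rotates the cycle so that its first arc $E_s\arc{}E_{s+1}$ satisfies $\Abs{E_s}\leq\Abs{E_{s+1}}$ (such an arc exists by well-foundedness of $\N^2$); the trim conditions then immediately identify this as a substitution with $\abs{\tau(X)}_b\geq 1$. Second, the paper compresses your entire ``silent cycle'' analysis into a single sentence: iterating the cycle ``pops out an arbitrary number of letters of $X$,'' and nonerasing labels on the path from $E_\init$ to $E_s$ make $\sig(X)\in A^*$ grow without bound.

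Your more careful treatment of that second point is not wasted effort---it makes precise what the paper only asserts. Your mechanism for excluding the substitution-only case is exactly right: since substitutions fix constants, the number of constant positions $\abs{W}_B$ is non-decreasing along substitution arcs and strictly increases at the growth arc (as $X$ occurs in $W$), so a substitution-only cycle cannot return to the same $W$. This forces a compression arc in the cycle, and a constant-count argument along the whole cycle then shows some compression label $h$ in the cycle has $\abs{h(c)}\geq 2$ for a letter $c$ visible in the equation, hence in $\sig(W)$; the composite $g$ is therefore strictly length-increasing on $\sig_s(W_s)$, and nonerasing labels back to $E_\init$ finish. The paper's proof and yours differ only in how much of this is spelled out.
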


\begin{proof}
If $\cA\neq \es$, then $\cS$ has at least one solution by \prref{prop:bf}.
Now assume that $\cA$ contains a directed cycle. By hypothesis $\cA$ is trim. Hence, there is an accepting path with a directed cycle and this cycle doesn't involve any final state as final states are without outgoing arcs. Let $E_{s}\arc{h_s} 
\cdots \arc{h_{t}} E_t = E_s$ be this cycle. Without restriction we have 
$t > s$  and 
$\Abs {E_{s}} = \Abs {E_{s+1}}$ because $\N^\ell$ admits no infinite strictly descending chains. This means $E_{s}\arc{\eps} 
E_{s+1}$ must be a \subst \tra which is defined by some 
$\tau$ with $\abs{\tau(X)}_a\geq 1$ for some $X$ where $X$ appears in the equation belonging to ${E}_s$ and $a$ is a constant. Hence, on some accepting path we can pop out an arbitrary number of letters of $X$. Since on paths 
from an initial state to $E_{s}$ the labels are non-erasing \Endos, 
we see that we can make $\sig(X)\in A^*$ at the initial state $E_\init$ larger and larger. Thus, there are infinitely many solutions. 
The converse, under the assumption that (\ref{eq:compA}) holds, is trivial. 
\end{proof}

\section{Towards completeness}\label{sec:compcA}
During the completeness proof we always work with a state $E=(W,B,\cX,\theta,\mu)$ and a given \esolu $(\alp,\sig)$. Starting at a triple $({E},\alp,\sig)$   where 
${E}$ is a standard state, we describe a deterministic process which yields a path $(h_1, \ldots, h_t)$ inside the (infinite) automaton $\cF$ from ${E}$ to some final state $E_t=(W_t,B_t,\es,\es, \mu_t)$ so that $\alp\sig(W)= h_1 \cdots h_t(W_t)$.
Thus, $\cF$ is complete.  
The crucial property is that we are able to control the lengths of all intermediate equations $W_i$ for $1\leq s \leq t$ by 
\begin{equation}\label{eq:intequs}
\abs{W_s} \leq \abs{W_s} +\Oh(\del n).
\end{equation}
We make sure that whenever we see  an intermediate state $E_s$ 
where $\theta_s\neq \es$, then $\theta_s$ has a special structure. Moreover, when we follow a \comp \tra then we make sure the soundness condition holds for the  corresponding label according \prref{def:comcontra}.

We then can deduce \prref{thm:central} because for defining the complete 
NFA $\cAcS$ mentioned in the  theorem it is enough to consider the starting point  $$E=E_\init = (\Winit,A,\cX,\es,\mu_0).$$
Since $\abs\Winit \leq n$  we can ensure $\cAcS$ is finite by allowing \exes in $\cAcS$ only if the corresponding equation satisfies a concrete length bound in $\Oh(\del n)= \Oh(\abs H  n^2)$.
Moreover, we impose that $\cAcS$ is trim. We will come back later to these issues. For the moment we work in the infinite automaton $\cF$ and there is no length bound for the equation $W$.

\subsection{Dummy variables denoting the empty word}\label{sec:dum}
In the following it is convenient to have the following notation at our disposal. 
We introduce purely formal symbols of the form $(f,D)$ where $f\in H$ and $D$ is called \emph{dummy variable}, but the symbol  $(f,D)$ is just another explicit notation for the empty word $1$. The dummy variable
 $D$ is never listed in $\cY$. Its only purpose is that we have a unified notation for local equations (and avoid case distinctions). Since $(f,D)=1$ every \morph maps $(f,D)$ to $1$.  
The advantage is that with the help of a dummy variable, 
we may, whenever convenient, assume that every local equation has the form 
$$u(f,X)w(g,Y)v = u Z v.
$$
Here $X,Y,Z$ are  (perhaps dummy) variables and $u,v,w$ are words over constants. 

\subsection{The weight of  \esolu and the \fopro}\label{sec:wses}
We need a termination condition for the following compression procedure.
Therefore we define 
a \emph{weight} $\Abs{E,\alp,\sig}\in \N^3$ for the triple $(E,\alp,\sig)$ where 
$E= (W,B,\cX,\theta,\mu)$ is a state with  an entire
solution  $(\alp,\sig)$ by
 \begin{displaymath}
        \Abs{E,\alp,\sig} =  
{\begin{cases}
              \left(\sum_{X\in \cX}|\alp\sig(X)|, \Abs W, \abs B\right) & \text{ if $E$ is not final,}\\
         (0,0,0) & \text{ otherwise.}   
        \end{cases}}
    \end{displaymath}

At non-final states the weights 
$\Abs{W} = \abs W + {30}\del\sum_{Y\in \cY}|W|_Y$ and $\Abs{E} = (\Abs{E},\abs{B})$ were defined in \prref{eq:weightW}. Thus, actually for all states, we can write $\Abs{E,\alp,\sig}$ as a pair
  \begin{displaymath}
\Abs{E,\alp,\sig}= (\sum_{X\in \cX}|\alp\sig(X)|,\Abs{E}). 
\end{displaymath}
Being at $(E,\alp,\sig)$ we say that a \tra ${E}\arc h E'= (W',B',\cX',\theta',\mu')$ satisfies the \emph{forward property} if $E'$ has an \esolu $(\alp h,\sig') $ such that first $\Abs{E',\alp h,\sig'}<\Abs{E,\alp,\sig}$ and second, 
$\alp h \sig'(W') = \alp\sig(W).$

Following a \tra ${E}\arc h E'$ which satisfies the \fopro means that we switch from 
$(E,\alp,\sig)$ to $(E',\alp h,\sig') = (E',\alp',\sig')$. Typically after each such step we rename the tuple 
$(W',B',\cX',\theta',\mu',\alp',\sig')$ as $(W,B,\cX,\theta,\mu,\alp,\sig)$.
Since using a \tra satisfying the \fopro reduces the weight $\Abs{E,\alp,\sig}$, there are no infinite paths of \tra{s} where all \tra{s} on the path satisfy the \fopro. 

\subsection{Meta rules}\label{sec:meta}
Let $E=(W,B,\cX,\theta,\mu)$ be a state with an \esolu $(\alp,\sig)$.
We apply the following meta rules whenever possible. 

\subsubsection{Remove variables with short \solu{s}}\label{sec:meta1}
If $\abs{\sig(Y)}\leq {30}\del$ for some variable $Y\in \cY$ such that $|W|_Y\geq 1$, then follow a \subst \tra ${E}\arc \eps {E}'$ which is defined 
by a $B$-\morph $\tau$ such that $\tau(Z)= \sig(Z)$ if $\abs{\sig(Z)}\leq {30} \del$ and  $\tau(Z)= Z$ otherwise. The state ${E}'= (\tau(W),B,\cX',\theta',\mu')$
uses the same set of constants, but we have $\cX'\ssneq \cX$. We also have 
$\theta'\sse \theta$ and $\mu'$ is the restriction of $\mu$. (Thus, according to our convention we can also write ${E}'= (\tau(W),B,\cX',\theta,\mu)$.)
Let 
$\sig'$ be the restriction of $\sig$, then $(\alp,\sig')$ is an \esolu at ${E}'$ and 
we have  $\alp \eps \sig(W) =\alp\sig'\tau(W)$. Moreover, 
$\Abs{W'}< \Abs{W}$. Hence, $\Abs{{E}'}< \Abs{{E}}$ and the \tra reduces the weight of the state. 

As a consequence,  whenever we are at a state $E=(W,B,\cX,\theta,\mu)$ with an \esolu $(\alp,\sig)$, then we assume $\abs{\sig(Y)}> {30} \del$ for all $Y\in \cY$ where $|W|_Y\geq 1$.

\subsubsection{Remove useless constants}\label{sec:meta2}
We say that a letter $a\in B\sm A$ is \emph{useless} (\wrt $\sig$)  if 
$|\sig(W)|_{f\cdot a}=0$ for all $f\in H$. Note that a letter $a\in A$ is never useless. If $B$ contains a useless letter $a$, then define 
$B'=B\sm \set{f\cdot a}{f\in H}$. 
The inclusion of $B'$ into $B$ defines canonical embeddings
$M(B',\cX,\theta,\mu)\to M(B,\cX,\theta,\mu)$ and $ M(B',\theta,\mu)\to M(B,\theta,\mu)$ such that $W\in M(B',\cX,\theta,\mu)$ and $\sig(W)\in M(B',\theta,\mu)$.
The state ${E}'= (W,B',\cX,\theta,\mu)$ has an \esolu $(\alp,\sig')$ where $\sig'$ is the restriction of $\sig$. Moreover, $\Abs{{E}'}< \Abs{{E}}$. 
Hence, we can follow the  \comp \tra $E\arc \eps (W,B',\cX,\theta,\mu)$ which satisfies the \fopro.

As a consequence,  whenever we are at a state $E=(W,B,\cX,\theta,\mu)$ with an \esolu $(\alp,\sig)$, then we assume that $B$ doesn't contain any useless letters.

\begin{remark}\label{rem:useless}
We may have that $B\setminus A$  contains letters that are not {$H$-visible}, but a solution $\sig$ uses them.  Removing useless letters does not remove such letters. 
\end{remark}

\subsubsection{Moving to a final state}\label{sec:movefin}
Let $\EE=(W,B,\es,\es,\mu)$ be a standard state without any variables and with an \esolu $(\alp, \sig)$. Then  $\sig=\id{B}$ is the identity on $M(B,\es,\es,\mu)= M(B,\es,\mu)$ and we have $W= \ov W$. 
If $\EE$ is final, there is nothing to do. Hence, we assume that $\EE$ is not final. 
Since $W=\ov W$, \prref{def:exe} tells us  
 $$W=  \#x_1\#\cdots \#x_k \# u\# \# \ov u\# \ov{x_k}\#\cdots \# \ov {x_1} \#.$$
 Hence we can enlarge $B$ to a set $B'$ which contains all distinguished 
 $d_i$ for $1\leq i \leq k$. (By our convention none of the $d_i$ belongs to $B$ because the state $E$ is not final.)
 We define a $B$-\morph 
 $h\colon M(B',\es,\mu)\to M(B,\es,\mu)$ by letting $h(d_i)=x_i$. 
 Moreover, we let 
 $$W'=  \#d_1\#\cdots \#d_k \# u\# \# \ov u\# \ov{d_k}\#\cdots \# \ov {d_1} \#.$$ 
 We have no variables and $W'= \ov {W'}$. Hence $\EE'=(W',B',\es,\es,\mu')$ is final.
 The \esolu at $E'$ is  $(\alp h, \id{B})$ and we have $\alp h (W')= \alp(W)$ since none of the $d_i$ belong to $B$. Since $\Abs{E,\alp,\id{B}} >(0,0,0)= \Abs{E',\alp h,\id{B}}$  the \comp \tra 
$E\arc hE'$ satisfies the \fopro. Hence, we are done. 

As a consequence,  whenever we are at a standard state $E=(W,B,\cX,\es,\mu)$ 
with an \esolu $(\alp,\sig)$, then we assume that $\cX \neq \es$.
Moreover according to the other meta rules we have
$|\sig(X)| \geq {30} \del$ for all $X\in \cX$ and every constant $b\in B$ is $H$-visible in $W$.


\section{Compression round: the first phase}\label{sec:compr}
We perform the compression in rounds. 
Each round has two phases. The first phase is called \emph{\delper compression}, the second one is called \emph{pair compression}. During \delper compression we perform all meta rules whenever possible. 
 Recall how meta rules decrease the weight $(\Abs W, |B|)$  at states: removing a variable makes $\Abs{W}$ smaller, 
removing useless letters doesn't change $\Abs{W}$, but it makes $B$ smaller.  Moving to a final state decreases the weight of the state down to $(0,0)$ 
(which was the exceptional weight at final states) .
 None of these rules increases the
sum $\sum_{X\in \cX}|\alp\sig(X)|$. Therefore all  meta rules satisfy the \fopro according to \prref{sec:wses}.

\subsection{A simple, but useful, estimation}\label{sec:est}
During the rounds the length oscillates but it can be bounded by some 
function in $\Oh(\del n)$. In order to obtain such a bound we will later apply the following fact twice with different parameters. 

\begin{lemma}\label{lem:enno}
Let  $0\leq q<1$ and $c\geq 1$ for some real constants $q,c$, and let 
 $s\colon \N \to \N$ be a function with $s(0) \leq  \frac{c}{1-q} \del n$ and  which satisfies  a bound 
$$s(t+1) \leq q\, s(t) + c \del n$$ for all $t\in\N$.
Then 
$s(t) \leq \frac{c}{1-q}\cdot \del n$ for all $t\in \N$. 
\end{lemma}
\begin{proof} 
The statement is true for $t=0$. Assuming it is true for $t\geq 0$ then 
\begin{equation}\label{eq:sbuuf}
s(t+1) \leq q s(t) + c \del n \leq  q \frac{c}{1-q}\del n + c \del n=\frac{c}{1-q}\cdot \del n.
\end{equation}
\end{proof} In  $\Oh$-notation (\ref{eq:sbuuf}) reads as: 
 if  $s(0) \geq 0$ and $s(t+1) \in q s(t) + \Oh( \del n)$, then $s(t) \in  \Oh( \del n)$
for all $t$.

\subsection{Alphabet reduction at standard states}\label{sec:alpred}
During our procedures we introduce more and more letters, so the set $B$ grows, and  removing useless letters is not enough to  keep  the size of 
$B$ in $\Oh(\abs H \cdot \abs W)$. 

The following procedure which we call {\em alphabet reduction} is not a meta rule (which we may apply whenever possible).
If we call the procedure we explicitly say so. When we call it we wish that $B\sm A$ contains only  $H$-visible letters in $W$. 

We begin at a standard state $E=(W,B,\cX,\es,\mu)$ with an \esolu $(\alp,\sig)$ where there is some letter $b\in B$ which is not $H$-visible. Hence $|W|_{f\cdot b}=0$ 
for all $f\in H$. 
Removing useless letters is a meta rule. Hence, we may assume without restriction that all letters are useful and therefore we may assume $|\sig(X)|_b\geq 1$ for some variable. (That is, we are in the situation of \prref{rem:useless}.)
Define  $$B'=A\cup \set{a\in B}{\exists f\in H: |W|_{f(a)} \geq 1}.$$ 
Then we have $W\in M(B',\cX,\es,\mu)$. The procedure will takes us 
(via a \comp \tra defined by the inclusion $B'\sse B$) to the state 
$E'=(W,B',\cX,\es,\mu)$. Since $b\in B\sm B'$ we have $|B'| <|B|$ and therefore $\Abs{E'} < \Abs E$, too.

It is here where the notion of entire solution becomes important. We have  $\alp\colon M(B,\es,\mu)\to M(A,\es,\mu_0)$, so we can define a $B'$-\morph $\bet\colon  M(B,\es,\mu)\to M(A,\es,\mu_0)$ by 
$\bet(b) = \alp (b)$ for $b\in B\sm B'$. Since $M(B,\es,\mu)= B^*$ is a free monoid, we don't have to worry to check defining relations. 
Moreover, $\sig' = \bet \sig$ is solution at $E'= (W,B',\cX,\es,\mu')$. 
Thus, we can switch from $(E,\alp,\sig)$ to $(E,\alp,\sig') =(E,\alp,\bet \sig)$ via the \comp \tra
$(W,B,\cX,\es,\mu)\arc \eps  (W, B',\cX,\es,\mu)$. 
Since $\alp$ is an $A$-\morph we obtain $\alp = \alp \bet$. 
Hence,  $\alp\sig(W)=  \alp \bet\sig(W) = \alp \eps\sig'(W)$ as desired. 

As a consequence,  whenever we perform an  alphabet reduction, then we arrive at a standard state $E=(W,B,\cX,\es,\mu)$ with an \esolu $(\alp,\sig)$ such that every letter in $B\sm A$ is $H$-visible in $W$.
This means that after alphabet reduction the size of $B$ is at most
$\abs H \cdot (\abs A +\abs W)$.

\subsection{Mapping the positions from $\sig(W)$ to $W$}\label{sec:mappos}
Let $E= (W,B,\cX,\es,\mu)$ be a state with an empty type $\theta$ and let  
$\sig\colon M(B,\cX,\es,\mu)\to M(B,\es,\mu)$ be any $B$-\morph. Recall that $\smallset{1\lds m}$ (resp.~$\smallset{1\lds \ell}$) denotes the set of positions of $\sig(W)$ (resp.~$W$). Then $\sig$ induces a  mapping $\pi_\sig$ from $\smallset{1\lds m}$ to $\os{1\lds \ell}$ as follows. 
We define  $\pi_\sig$ from left-to-right.
We let $\pi_\sig(1)=1$. The first position in $\sig(W)$ is labeled with $\#$ and so is the first position in $W$.  No other position than $1$ is mapped to $1$. 
We shall keep the  invariant 
that $\sig (W[1,m'])=W[1,\ell']$ if $m'$ is the largest position which is mapped to 
$\ell'$. \Ip  we have $\sig( W[m'+1,m])=W[\ell'+1,\ell]$.

Now assume $\pi_\sig(i)$ is already defined for all $1\leq i \leq m'$ and $m'\leq m$. If $m'=m$ we are done. Otherwise we have $m'<m$ and we consider 
$\pi_\sig(m')=\ell'$. By the  invariant we know $\ell'<\ell$. We look at the label
of the position $\ell'+1$. It is labeled by a letter in $W$ and there are two cases. In the first case the label is a constant $b\in B$. In this case we let $\pi_\sig(m'+1)=\ell'+1$. In the second case the label is of the form $Y$ with 
$Y\in \cY$. In that case we map all positions in the interval 
$[m'+1, m' + |\sig(Y)|\,]$ to the single position $\ell'+1$.

Note that $\pi_\sig\colon \smallset{1\lds m}\to\smallset{1\lds \ell}$ enjoys the following properties. If $\ell$ is a position of $W$ which is labeled by a constant $b\in B$, then $\oi{\pi_\sig}(\ell)$ is a single position in $\sig(W)$ which is labeled by $b$, too.  If $\ell$ is a position of $W$ which is labeled by a variable $Y\in \cY$, then 
$\oi{\pi_\sig}(\ell)$ is a interval of length $|\sig(Y)|$  in $\sig(W)$. The label of that interval is just $\sig(Y)$.

\begin{definition}\label{def:vispos}
We say that a position $m'$ of $\sig(W)$ is \emph{visible (in $W$)} if 
$\pi_\sig(m')$ is a constant. Otherwise it is called \emph{invisible}.
An interval $[i,j]$ of positions of $\sig(W)$ is \emph{visible (in $W$)} (resp.~\emph{invisible}) if all positions in that interval are visible  (resp.~invisible)
positions.  If $[i,j]$ contains an invisible position, but $\abs{\pi_\sig[i,j]} \geq 2$, then we say that the interval $[i,j]$ is \emph{crossing}. 
\end{definition}

\subsection{The start of a compression round}\label{sec:compsr}
Each compression round starts at a standard state 
$E_r= (W_r,B_r,\cX_r,\es,\mu_r)$ with  an entire
solution  $(\alp_r,\sig_r)$. 
We may assume that no meta rule is applicable. 
The very first step is now  an alphabet reduction. 
For simplicity, we denote 
the state again by $E_r= (W_r,B_r,\cX_r,\es,\mu_r)$ and we have
$|B_r|\leq |H|\cdot |W_r|$.

\subsection{\delper  compression}\label{sec:delpercomp}
For convenience we rename the tuple 
$$(E_{r},W_{r},B_{r},\cX_{r},\mu_{r},\alp_{r},\sig_{r})= (E,W,B,\cX,\mu,\alp,\sig).$$ At this point we  know that no meta rule applies to $E$ and that $\abs B\leq \abs H \cdot \abs W$. 
 
Let us consider all \velo maximal \delper factors $w$ 
of $\sig(W)$ which have a maximal occurrence with at least one visible position.  (By maximal occurrence we mean that $w$ is not a factor of a longer \delper word at that occurrence.)  We assume that at least one such occurrence exists, otherwise we skip the main body of the \delper  compression and proceed directly to the end: \prref{sec:enddelcomp}.

We write $w=up^e rv$
with $\abs{u}= \abs{v} = 3\del$, $p$ is primitive of length at most $\del$ and $r$ is a nonempty prefix of $p$. (Recall  very long means $|w|\geq 10\del$ so $|p^er|\geq 4\del$.)
By \prref{lem:sydney}, we can encode the factor $up^e rv$ uniquely by writing the triple $(p,r,e)$.
Let us call $u$ and $v$ the \emph{borders} of the \velo maximal \delper factor
$up^e rv$. Consider  different occurrences $up^e rv$ and $u'p'^{e'} r'v'$ of 
\velo maximal \delper factors in $\sig(W)$. If the occurrences overlap, then this overlap takes place in the borders only, because otherwise the occurrence of the factor was not maximal.

It follows that the number of 
occurrences \velo maximal \delper factors with at least one visible position is less than $\abs W$. Thus we find some minimal index set $\Lam$ of size 
$\abs \Lam< \abs W$ such that
$$F_\Lam= \set{u_\lam p_\lam^{e_\lam} \, r_\lam v_\lam}{\lam\in \Lam}$$
is exactly the set of \velo maximal \delper factors of $\sig(W)$ which have a maximal occurrence with at least one visible position.

The idea is that at the end we arrive at a state with a solution where all these occurrences are replaced by $u[r,s,\lam]v$ where  $[r,s,\lam]$ is the notation for a fresh letter such that $p=rs$, $r\neq 1$, and $\lam\in \Lam$. We also {\em color} certain positions in $W$ and $\sig(W)$. At the end of the process a position will be {\em  green} \IFF it is labeled by some new letter $[r,s,\lam]$.

Note that $\lam$ is just a formal symbol: we need at most $\Oh(|W|)$ bits to encode it. 
We also define a set of primitive words
$$P_\Lam= \set{p_\lam}{u_\lam p_\lam^{e_\lam} \, r_\lam v_\lam\in F_\Lam}.$$
We have 
$$1\leq |P_\Lam| \leq |F_\Lam|  = |\Lam|.$$

Next we consider fresh variables which are denoted
as $[X,f(sr)]$ where $X\in \cX\sse \cX$, $f \in H$ and  for certain $p_\lam\in P_\Lam$ and then for all $rs=p_\lam$. These new variables will later be typed. We define the action of $H$ by $g\cdot [X,p]= [X,g(p)]$
and the \invol by $\ov{[X,p]}=[\ov X,\ov{p}]$. The idea is $\sig([X,p])\in p^*$; and thus,  $\sig(\ov{[X,p]})\in {\ov p}^*$ and $\sig((f,[X,p]))\in f(p)^*$. Note that $(f,[X,p]) = (g,[X,p])$ \IFF $\oi g f(p) = p$ and hence $\oi g f\in H_p$ is in the known stabilizer of $p$. 

The following routine introduces these new variables using \subst transitions. Recall that defining $\tau(X) = w$ 
substitutes $(f,X)$ by $f(w)$ and simultaneously  $(f,\ov X)$ by $f(\ov w) = \ov{f(w)}$ for all $f\in H$. 

\bigskip
\noindent\textsc{\textbf{begin procedure}} (insert new variables)\\ 
Initialize a set of fresh variables by $\Xnew=\es$ and put $E=(W,B,\cX\cup \Xnew,\es,\mu)$.\\
\textsc{\textbf{forall}} $X\in \cX$  \textsc{\textbf{do}} \\
 (Note this means we do the process once for $X$ and once for $\ov X$.)
\begin{enumerate}
\item Apply all meta rules whenever possible; \ip  $\abs{\sig(Y)}\geq {30} \del$ for all variables. 
\item  Let $q^d q'$ be the longest suffix 
of $\sig(X)$ such that $q$ is primitive, $\abs q \leq \del$, and $q'$ is a prefix of $q$. 
If $\abs{q^d q'} \leq 3 \del$, then do nothing. 
\item 
If $\abs{q^d q'} > 3 \del$, then define words $p$, $p'$, and $e\geq 0$ by 
$q^d q'= u p^e p'$ with $\abs u = 3 \del$, $\abs p =\abs q$, and $1\neq p'\leq p$.
(Note that $p$ is primitive: we have $p=q_2q_1$ for some factorization $q=q_1q_2$.)
We enlarge $\Xnew$ by a fresh variables $[X,sr]$  for all factorizations $p=rs$.
Moreover, if we enlarge $\Xnew$ by some $[X,p]$, then we also 
include $[X,f(p)]$ and $[X,f(\ov p)]$ for all $f\in H$. 

We can write $\sig(X) = xup^e p'$ with $\abs {xu} \geq 3\del$. 
Follow a \subst \tra $E\arc \eps E'= (\tau(W),B,\cX\cup \Xnew,\es,\mu')$ which is defined by $\tau(X) = X[X,p]p'$ and define an \esolu  at $E'$ by
$(\alp,\sig')$ where $\sig'(X)=xu$ and $\sig'[X,p]=p^e$. The \tra satisfies the 
\fopro.
(Due to the meta rules it can  happen that $\cX$  becomes smaller and/or that $\Xnew$ is not enlarged at all.) \item Rename $E',\tau(W),\mu',\sig'$ as
$E, W,\mu,\sig$. 
\end{enumerate}
\textsc{\textbf{endforall}}\\
\noindent\textsc{\textbf{endprocedure}}

The number of new variables $[X,p]\in \Xnew$ is bounded by $\Oh(\abs H \del{n})$ because $\abs \cX\leq n$. The factor $\del$ comes in because we  consider 
all cyclic permutations  $sr$ of $p=rs$. The factor $\abs H$ comes in because we
close under $H$-action.   On the other hand we don't need to list $[X,f(p)]$ in $\Xnew$ for $1\neq f \in H$ if $[X,p]$ is already listed. Thus, a list of $\Oh(\del{n})$
new variables suffices to specify the full set $\Xnew$ (which is closed under the action of $H$ and involution. Note that $\ov{[X,p]} = [\ov X,\ov p]\neq [X,p]$. So we keep the invariant that variables are not self-involuting. 

From now on until the end of \delper compression we only remove variables. So,
if $\cY'= H\cdot \cX'$ is the full set of variables we meet during the whole procedure, then we have:
\begin{equation}\label{eq:cYprimes}
\abs{\cX'} \in \Oh(\del n).
\end{equation}

Before the procedure we had $\cY= H\times \cX$ and $\cX\sse \cX$, and 
$\sum_{Y\in \cY} |W|_Y\leq n$ as required for a standard state. 
The corresponding set after the procedure is $\cY_{\text{new}}= \Xnew \cup H\times \cX$.
We  have to check that $\sum_{Y\in \cY_{\text{new}}} |W|_Y\leq 3n$ (because otherwise it is not an extended equation as per \prref{def:exe}). This bound is immediate. 

 In the case $\Xnew = \es$ we are still at a standard state and 
 $\sum_{Y\in \cY_{\text{new}}} |W|_Y\leq n.$ 
  For $\Xnew\neq \es$ we are  not at a standard state because $\cY_{\text{new}}$
 is not contained in  $H\times \cX$.

Since for $(W,B,\cX\cup \Xnew,\es,\mu)$ the type $\theta$ is empty,  we map the positions of $\sig(W)$ to the positions $W$ as explained above in \prref{sec:mappos}. Consider any occurrence  of a \velo  \delper factor $w= up^ep'v$ in $\sig(W)$  which is maximal and where at least one position is visible and where $|u|=|v|= 3 \del$.  Consider 
the occurrence of all \velo maximal \delper factors $w'$ where
$w'\in \set{g\cdot w, g\cdot \ov w}{g\in H}$.
Each  factor $w'$ can be written as $w'= u'w''v'$ where 
 $|u'|=|v'|= 3 \del$.

For all maximal occurrences of these factors $w'$ let us color the inner positions belonging to $w''$ {green}. Then only green positions are mapped to a variable $[X,q]\in \Xnew$. It is also clear that we we can write 
$q=sr$ for some factorization $p=rs$. Let us transport the green color to the corresponding positions in $W$. Then for all positions in $W$ which are labeled by a variable it holds that the position is green \IFF it is new variable. Note that green positions in $\sig(W)$ are separated by words of length at least $3 \del$.

In the next procedure we will  introduce a type $\theta$ which consists of defining relations 
of the form $[X,aq]a=a[X,qa]$, but it will be enough to apply such a rule where both positions in $W$ are green. Hence, the color of the positions will not be altered under this restriction. In order to define $\theta$ we use  $\Xnew\neq \es$, otherwise we skip the next procedure.

\bigskip
\noindent\textsc{\textbf{begin procedure}} (introduce a type $\theta$)
\begin{enumerate}
\item Define the type $\theta$ by 
\begin{equation*}
\begin{aligned}
\set{([X,as]a,\, a[X,sa])}{a\in B \wedge [X,as]\in \Xnew} \cup \set{([X,p]p,\, p[X,p])}{[X,p]\in \Xnew}.
\end{aligned}
\end{equation*}
Note that $[X,p]p= p[X,p]$ is actually a consequence of the other relations in $\theta$. We include it order to satisfy the definition of type in \prref{sec:twngi}, that if a variable (in this case $[X,p]$) appears in a type then there is a unique primitive word $p$ with which it commutes.
\item Choose any $[X,p]\in \Xnew$ and write $\sig([X,p]) = p^e$. (Note that we have $e\geq 10$ in this case, since a meta rule would remove the variable if it has a solution shorter than $30\del$).
Define a \morph 
$\tau\colon  M(B,\cX\cup \Xnew,\es,\mu)\to M(B,\cX\cup \Xnew,\theta,\mu)$ by
$\tau[X,p] = [X,p]p^5$. The \morph is well-defined since  $[X,q]q = q[X,q]$ in $M(B,\cX\cup \Xnew,\theta,\mu)$ for all $[X,q]\in \Xnew$. 
\item Follow the corresponding \subst \tra
$$(W,B,\cX\cup \Xnew,\es,\mu)\arc \eps (\tau(W),B,\cX\cup \Xnew,\theta,\mu').$$
The \tra satisfies the \fopro with the \esolu  
$(\alp,\sig')$ where $\sig'[X,p] = p^{e-5}$. Apply all meta rules. (After that we may have $\theta=\es$ again.) 
\item Rename $(\tau(W),\mu',\sig')$ as
$(W,\mu,\sig)$. \end{enumerate}
\noindent\textsc{\textbf{endprocedure}}

Using the relations from $\theta$ we can move the $\Xnew$ variables around over green positions. Thus, we can choose a word representation for $W\in M(B,\cX\cup \Xnew,\theta,\mu)$ 
as $W\in (B\cup \cY\cup \Xnew)^*$ such that every maximal green interval  $[i,j]$ of positions in $W$ is labeled by a word of the form
\begin{align}\label{eq:Wij}
W[i,j] &= [X_1,rs]\cdots [X_g,rs](rs)\cdots (rs)(rsr)\,[X_{g+1},sr]\cdots [X_d,sr]\\
         &=  [X_1,rs]\cdots [X_d,rs](rs)\cdots (rs)(rsr)\\
         &= (rsr) (sr)\cdots (sr)[X_1,sr]\cdots [X_d,sr].
\end{align}
In the following we simply say that $W[i,j]$ is a \emph{maximal green factor}
when we actually refer to the label $W[i,j]$ of a maximal green interval $[i,j]$ of positions. 
We can choose $0\leq d \leq 2$ because in a standard state all local equations are triangular, but this is not essential.  Without restriction we have $r\neq 1$ and that $(rs)\cdots (rs)(rsr) =(rs)^er$ satisfies $e\geq 1$. This is clear if $d=0$. For $d\geq 1$ it is enough to substitute
$[X_1,rs]$ by $[X_1,rs]rs$. 
For each $1\leq i \leq d$ there is $e_i\in \N$ such that  $\sig[X_i,rs] = (rs)^{e_i}$ 
and therefore: 
\begin{align}\label{eq:sigWij}
\sig(W[i,j]) &= (rs)^{e +e_1 + \cdots +e_d}\,r = r(sr)^{e +e_1 + \cdots +e_d}.\\
\sig(\ov{W[i,j]}) &= (\ov r \, \ov s)^{e +e_1 + \cdots +e_d}\,\ov r = \ov r(\ov s\, \ov r)^{e +e_1 + \cdots +e_d}.
\end{align}

At this point we will change $\theta$ to  $\theta'$ (defined below) since the defining relations $[X,aq]a=a[X,qa]$ with $a\in B$ will be of no use anymore. 
The idea is to replace the factors $rs$, $sr$, and $rsr$ by fresh letters denoted by $[rs]$, $[sr]$, and $[r,s,\lam]$. The $\lam$ is used to encode the sum $e +e_1 + \cdots +e_d$.
We will make the assumption $r\neq 1$,  then $[rs]$, $[sr]$, and $[r,s,\lam]$ are three different letters for $s\neq 1$, and there is no letter $[1,p_\lam,\lam]$, only $[p_\lam,1,\lam]$.
For the maximal green factor $W[i,j]$ we intend to define a word $W'[i,j]$ and a type $\theta'$ such that
\begin{align}\label{eq:Wij1}
W'[i,j] &= [X_1,rs]\cdots [X_g,rs]\,[rs]\cdots [rs]\, [r,s,\lam]\,[X_{g+1},sr]\cdots [X_d,sr]\\ \label{eq:Wij2}
&= [rs]\cdots [rs]\,[X_1,rs]\cdots [X_g,rs]\, [r,s,\lam]\,[X_{g+1},sr]\cdots [X_d,sr]
\\
&= [X_1,rs]\cdots [X_g,rs]\, [r,s,\lam]\,[X_{g+1},sr]\cdots [X_d,sr]\,[sr]\cdots [sr].
\label{eq:Wij3}
        \end{align}
More precisely, for each $u_\lam p_\lam^{e_\lam} \, r_\lam v_\lam\in F_\lam$ we associate a new letter $[r_\lam,s_\lam,\lam]$ with $\mu'([r_\lam,s_\lam,\lam])= \mu(r_\lam,s_\lam r_\lam)$,
and $[q]$ for every typed variable $[X,q]$ with  $\mu'[q] = \mu(q)$. Recall our notation that $u_\lam p_\lam^{e_\lam} \, r_\lam v_\lam$ is a \velo \delper word, 
$|u_\lam| = |v_\lam| =3 \del$, $p$ is primitive, and $r\neq 1$. 
It is important that $[r,s,\lam]$ is visible, whenever at least one the green positions is visible. This why in the different word representations (\ref{eq:Wij1})--(\ref{eq:Wij3}) for the same $W'[i,j]$ 
the  $[r,s,\lam]$  always sits between the variables. 

By introducing (if necessary) more fresh 
letters we close the set of fresh letters under \invol and $H$-action. We let:
\begin{align}\label{eq:Bnes}
\ov{[r_\lam,s_\lam,\lam]} =[\ov{r_\lam},\ov{s_\lam},\lam]&\;\text{ and }\; \ov{[q]} = [\ov q].\\
g\cdot {[r_\lam,s_\lam,\lam]} =[g\cdot{r_\lam},g\cdot{s_\lam},\lam]&\;\text{ and }\; g\cdot{[q]} = [g\cdot q] \text{ for $g\in H$.}\label{eq:Bness}
\end{align}
The set of these new letters is denoted by $\Bnew$. The number of new letters can be bounded by:
 $$\abs \Bnew \in \Oh( \abs H \cdot (|W| + \del n)).$$
Let $B'= B\cup \Bnew$. Next, we define the new type $\theta'$. 
For each typed variable $[X,q]$ there is exactly one commutation rule: 
$[X,q]q = q [X,q]$. The other defining relations say that $[r_\lam s_\lam]$ and $[s_\lam r_\lam]$ are ``conjugate'' due to the letter $[r_\lam,s_\lam,\lam]$.
Making this formal we specify $\theta'$ by:
\begin{align}\label{eq:thetaprime}
\theta'=&\set{([X,q][q],\,[q] [X,q])}{ [X,q]\in \Xnew}\\&\cup\set{[r,s,\lam] [s r],\, [r s] ([r,s,\lam]}{[r,s,\lam]\in \Bnew}.
\end{align}
Note that the defining relations in $\theta'$ are designed that (\ref{eq:Wij1})--(\ref{eq:Wij3}) hold. 

We can now define the rest of the \delper compression procedure. 
It is the analogue to Je\.z's ``block compression'' as described in \cite{DiekertJP16}. 
During the process sets of positions for $W$ and $\sig(W)$ change, but our process makes clear that we can always transport the green color: no change
involves an interval which has both colored and uncolored positions.  
We perform the following steps.

\bigskip
\noindent\textsc{\textbf{begin procedure}} (remove \velo \delper factors with a visible position)
\begin{enumerate}
\item Define the element $W'\in M(B',\cX\cup \Xnew, \theta',\mu')$  by replacing maximal green factors in $W$ just we as have done for $W[i,j]$ in \prref{eq:Wij} in order to produce $W'[i,j]$ in \prref{eq:Wij1}. Doing this everywhere defines $W'$ in a word representation as  $W'\in (B'\cup\cY\cup \Xnew)^*$.
\item Define a $B\cup\cX\cup\Xnew$-\morph 
$$h_1\colon M(B',\cX\cup \Xnew, \theta',\mu') \to M(B,\cX\cup \Xnew, \theta,\mu)$$ 
by $h[r,s,\lam]= rsr$ and $h[q]= q.$ We have $W= h_1(W')$ and 
we obtain a \comp \tra satisfying the \fopro
$$ E= (W,B,\cX\cup \Xnew, \theta,\mu) \arc {h_1}(W',B',\cX\cup \Xnew, \theta',\mu') = E'.$$
Note that $\Abs{E'} <\Abs{E}$ since $\Abs{W'} <\Abs{W}$ due to the fact that
(by our assumption) at least one green interval exists with a visible position exists; and therefore some new letter $[r,s,\lam]$ is visible in $W'$ (which represents the word $rsr$ of length at least $2$). 
The new \esolu at $E'$ is $(\alp',\sig') = (\alp h_1,\sig')$ where $\sig'(X) = X$ for 
$X\in \cX$ and $\sig'[X,p] = [p]^e$ if $\sig([X,p]) = p^e$. 
We apply the meta rules and then we rename $E',W',\alp',\sig'$ as $E,W,\alp',\sig'$ but we keep the notation for $B'$, $\cX$, $\Xnew$, $\theta'$, and $\mu'$ (although $B'$, $\Xnew$, and $\theta'$ may become smaller by the meta rules and $\mu'$ changes). 
\item
\noindent\textsc{\textbf{while}} there is letter $[p]\in \Bnew$ \textsc{\textbf{do}}
\begin{enumerate}
\item If $\Xnew \neq \es$, then choose some $[X,p]\in \Xnew$. 
Use a \subst \tra defined by  $\tau[X,p]= [X,p] [p]^2$ to make sure that 
 $\sig([X,p])$ is shorter than at the beginning of the loop and that we don't run out of
 letters $[p]$ as long as there are typed variables.  The invariant is that as long as 
 $\Xnew \neq \es$ there is some letter $[p]$ visible. 
\item  Use \tra{s} of the form $[X,p] \mapsto  [X,p] [p]$ in order to keep the invariant that $\sig([X,p]) = [p]^e$ where $e$ is even. 
Moreover, due the meta rules we maintain $|\sig([X,p])| \geq {30}\del $. At some point 
$|\sig([X,p])|$  might be too short, then we remove $[X,p]$ from $\Xnew$. 
We also maintain the invariant that $|\sig([X,p])| = |\sig([X,q])| =  |\sig([\ov X,\ov q])|$ for all $p,q$ and $X\in \cX$. Thus, if we remove one $[X,p]$, then all other typed variables using the symbol $X$ are removed simultaneously and $\theta'$ becomes smaller, too. 
\item If there is a maximal green factor
\begin{align}\label{eq:move2prime}
W[i,j]= [X_1,p]\cdots [X_g,p][p]^e[r,s,\lam]\,[X_{g+1},q]\cdots [X_d,q]
\end{align}
where $d\geq 0$ and $e$ is odd, then  
define an endo\morph 
$$h_\lam\colon M(B',\cX\cup \Xnew, \theta',\mu')\to M(B',\cX\cup \Xnew, \theta',\mu')$$
by $h_\lam([r,s,\lam])= [p][r,s,\lam]$. 
Thus, we can write 
\begin{align}\label{eq:move3prime1}W'[i,j] = h_\lam([X_1,p\cdots [X_m,p][p]^{e-1}[r,s,\lam]\,[X_{m+1},q]\cdots [X_\ell,q]).
\end{align}
This defines a new equation $W'$ and 
a $B'$-\morph $\sig'$ such that $h(W')=W$ and $\sig h_\lam(W')=\sig'(W)$.
Hence, there is a \comp \tra satisfying the \fopro
$$ E= (W,B,\cX\cup \Xnew, \theta,\mu) \arc {h_\lam}(W',B',\cX\cup \Xnew, \theta',\mu') = E'.$$
As above, $\Abs{E'} <\Abs{E}$ since $\Abs{W'} <\Abs{W}$. The new \esolu at $E'$ is $(\alp',\sig') = (\alp h_2,\sig')$.
We apply the meta rules and then we rename $E',W',\alp',\sig'$ as $E,W,\alp',\sig'$ .

\item Due to the previous steps: whenever we see a maximal green factor
$$[X_1,p]\cdots [X_g,p][p]^e[r,s,\lam]\,[X_{g+1},q]\cdots [X_d,q],$$ 
then $\sig[X_i,p] \in ([p][p])^*$ for $1\leq i \leq d$ and $e$ is even. 
Define an endo\morph 
$$h_3\colon M(B',\cX\cup \Xnew, \theta',\mu')\to M(B',\cX\cup \Xnew, \theta',\mu')$$
by $h_3([p])= [p]^2$ for all $[p]$ which appear in $\Bnew$.
Thus, we can write 
\begin{align}\label{eq:move3prime2}W'[i,j] = h_3([X_1,p]\cdots [X_m,p][p]^{e/2}[r,s,\lam]\,[X_{m+1},q]\cdots [X_\ell,q]).
\end{align}
This defines a new equation $W'$ and 
a $B'$-\morph $\sig'$ such that $h(W')=W$ and $\sig h(W')=\sig'(W)$.
Hence, there is a \comp \tra satisfying the \fopro
$$ E= (W,B,\cX\cup \Xnew, \theta,\mu) \arc {h}(W',B',\cX\cup \Xnew, \theta',\mu') = E'.$$
As above, $\Abs{E'} <\Abs{E}$ since $\Abs{W'} <\Abs{W}$. The new \esolu at $E'$ is $(\alp',\sig') = (\alp h,\sig')$.
We apply the meta rules and then we rename $E',W',\alp',\sig'$ as $E,W,\alp',\sig'$ .
\end{enumerate}
\noindent\textsc{\textbf{endwhile}}
\end{enumerate}
\noindent\textsc{\textbf{endprocedure}}

It is clear that the procedure terminates in some standard state. Let us denote that state and its \esolu as: 
\begin{equation}\label{eq:Ws1}
E_s= (W_s,B_s,\cX_s,\es,\mu_s) \text{\quad and \quad} (\alp_s,\sig_s).
\end{equation}
We began the routine at $E_r$. During the procedure 
we see symbols $[X,p], [rs]$ and $[r,s,\lam]$, and the length of the equation $W$ grows as we pop out letters in the suffix and prefix of each variable.
At the end all the new variables disappeared, either by the meta rules or
when maximal green factors are compressed into a single letter $[r,s,\lam]$. 
The only new letters in $B_s$ are of the form  $[r_\lam,s_\lam,\lam]$ and there are not more than
$|H|\cdot |W_{r}|$ of them. 

The following proposition summarizes all the changes that happen in the procedure.

\begin{proposition}\label{prop:delper}
Let $E_r= (W_r,B_r,\cX_r,\es,\mu_r)$ be the state where we started 
\delper-compression with an \esolu $(\alp_r,\sig_r)$. Let 
$E_s= (W_s,B_s,\cX_s,\es,\mu_s)$ denote
the standard state with the  \esolu $(\alp_s,\sig_s)$ where we finish \delper compression
and let $(W,B',\cX',\theta,\mu)$ be any state which we have seen (with the full set of variables $\cY'= H\cdot \cX'$) on the path from $E_r$ to $E_s$ during the procedure.

 Then we have the following.
 \begin{enumerate}
 \item 
 $B_s= B\cup \set{[r_\lam,s_\lam,\lam]}{\lam \in \Lam}$ for some $B\sse B_r$.
 \item 
$|B_s|\leq |H|\cdot (|W_{r}|+|W_{s}|)$. 
\item 
$ |B'| \in |H|\cdot (|W_{r}|+ \Oh(\del n))$.

\item
$ \Abs{W_{s}}\leq \Abs{W_{r}} +14\del n \leq \Abs{W_{r}} +20\del n$.
\item
$  \Abs{W} \in \Abs{W_r} + \Oh(\del n)\sse \abs{W_r} + \Oh(\del n)$.

\item 
$\cX_s\sse \cX_r \sse \cX$. 
\item 
$ \abs{\cX'}\in \Oh(\del n)\label{eq:iincss}$.

\item 
For each $X\in \cX_s$ the word $\sig_s(X)$ does not start or end with a \velo 
 \delper word. 

 \end{enumerate}
\end{proposition}

\begin{proof}
We justify each item as follows.
\begin{enumerate}\item Meta rules may remove some (useless) letters from the initial alphabet $B_r$, so we have $B\subseteq B_r$, and the only new constants that survive at the end  are of the form $ \set{[r_\lam,s_\lam,\lam]}{\lam \in \Lam}$.
\item $B_s$ consists of letters in $H\cdot B_r$ and those from $H\cdot \set{[r_\lam,s_\lam,\lam]}{\lam \in \Lam}$. Since we applied alphabet reduction at the beginning, $|B_r|\leq H\cdot |W_r|$, and since the letters $[r_\lam,s_\lam,\lam]$ cannot be eliminated by any compression during the procedure, their number is bounded by $H\cdot |W_s|$. The only other new constants added during the procedure are of the form $[p]$ but these are all eliminated by compression, so do not appear in $B_s$ by the meta rule.
\item Again we have $|B_r|\leq H\cdot |W_r|$. During the procedure we add letters $[rs]$ and $[r,s,\lam]$. For each variable $X\in \cX$  we add some $[rs]$ and $[r,s,\lam]$, then we need to multiply by $\delta$ since we have all cyclic permutations of these and $|rs|\leq \del$. Since the are at most $n$ variables this gives $\Oh(\del n)$ new constants, so applying the action we get the bound of $H\cdot \Oh(\del n)$ new constants.

\item We first pop out $\tau(X)=X[X,p]p'$ because we had $\sig(X)=xq^dq'=xup^ep'$ with $|q^dq'|\geq 3\del$. (If $|q^dq'|<3\del$ we do nothing.)
After applying $\tau$ we have $\sig(X)=xu$ and  $\sig([X,p])=p^e$.
If it is the case that $|p^ep'|<7\del$ then later we do not apply any compression to $[X,p]$ since it is not part of a very long factor, instead we simply pop it out. This contributes at most $14\del n$ in the worst case that this happens in the suffix of every $X\in \cX$.

If $|p^ep'|\geq 7\del$ then together with $u$ this gives a factor of length at least $10\del$, so $[X,p]$ is part of a very long \delper word so it is compressed down to a single letter. Thus $7\del$ is the most added at either end of any variable by the procedure. This gives $14\del n$. We give the larger bound $20\del n$ to simplify later calculations only.

\item Since $\Abs{W} \leq \abs{W} + 90 \del n\in \abs{W}+\Oh(\del n)$ at every state in $\cT$, it is enough to show $\Abs{W} \in \Abs{W_r} + \Oh(\del n)$.
However this only requires the estimation in \prref{sec:est}.  By that estimation we content ourselves to define a function $s\colon \N \to \N$ with $s(0) \leq c \del n$ and  which satisfies for all $t$ a bound 
$$s(t+1) \leq q\, s(t) + c \del n$$
for some  $q<1$ and $c\geq 1$.
To see where the $q$ comes from in our application, choose 
$s(0)$ to be number of letters $[p]$ at the state where they first appear. 
Each time we pass through a \tra defined by $h([p]) = [p][p]$ we half the number of these letters; and this shows that we can define $q=1/2$. Between these steps where we halve the number of $[p]$'s we create at most $c\del n$ new ones with $c\in \Oh(1)$.

\item During the procedure we add new variables $[X,p]$ but these are eliminated by the compression. Since we apply meta rules we may also remove variables $X\in \cX_r$. Thus
$\cX_s\sse \cX_r \sse \cX$.

\item 
This is justified above at  \prref{eq:cYprimes}.

\item 
Consider any $\sig_s(X)$ with $X\in \cX_s$. If for that $X$, the word $\sig(X)$ at the beginning at the procedure ``insert new variables'' had 
a \delper suffix of length more than $3\del$, then, due to the splitting of variables, 
the length of the maximal \delper suffix in $\sig_s(X)$ is  exactly $3\del$. Hence, there is no \velo \delper suffix. In the other case the suffix of length $3\del$ in $\sig(X)$ and $\sig_s(X)$ coincide. Thus, in this case the length of the maximal \delper suffix in $\sig_s(X)$ is  at most $3\del$.
Since $X\in \cX$ implies $\ov X\in \cX$ the same is true for the prefix of each variable.
\end{enumerate}

The proposition is therefore shown.
\end{proof}

\subsection{The end of the \delper compression}\label{sec:enddelcomp}
Recall that we started a compression round at a standard state $E_r$ with an equation $\abs{W_r}$; and we end standard state $E_s$ with an equation $\abs{W_s}$. (Possibly $r=s$.) If, due the meta rules, $E_s$ is final, we are done. Hence, we continue under the assumption that  $E_s$ is not a final state. 

\section{Pair compression}\label{sec:paircomp}
We enter the second phase of the compression round with ``pair compression'' directly after the end of \delper compression. We enter the pair compression procedure at  
a standard state $E_s$ which is not final and with an  \esolu $(\alp_s,\sig_s)$.
If $E_r$ and $(\alp_r,\sig_r)$ denote  the situation where we began the current compression round (where we began  \delper compression), then
\prref{prop:delper} tells us: 
\begin{equation}\label{eq:Ws2}
\Abs{W_s} \leq \Abs{W_r}+ 20 \del n. 
\end{equation}

During the pair compression all states are standard states. No type is needed. Phrased differently, we have $\theta= \es$, there are no typed variables and hence, all variables belong to $\cX$. Thus, the number of positions labeled with twisted variables is at most $n$, as it is required for standard states.

Our goal in this section  is to compress pairs $ab\leq W_s$  of constants into single letters without causing any conflict due to  overlap with other pairs or variables that are connected via twisted equations. In particular, compressing a pair linked to a \delper factor might cause problems, so we wish to avoid compressing those pairs.
 This leads us to define the following.
 \begin{definition}\label{def:pccond}
Let $E=(W,B,\cX,\es,\mu)$  be a standard state  with a \solu $\sig$.
We say that $(E,\sig)$ 
 satisfies the \emph{shrinking pair condition} if 
 there is no $X\in \cX$ such the word $\sig(X)$ starts with a \velo \delper word\footnote{Recall that a word $w$ is \velo \delper \IFF $\abs w \geq 10 \del$ and 
$w$ is a prefix of some word $p^{|w|}$ where $\abs p \leq \del$, see \prref{def:delpervelos}.}.
\end{definition}
Note this is 
the situation we find ourselves in at the conclusion of the \delper compression procedure: $E_s= (W_s,B_s,\cX_s,\es,\mu_s)$ with its \solu $\sig_s$ satisfies \prref{def:pccond}.
The shrinking pair condition is a necessary condition when proving 
\prref{lem:fareps} below. That technical lemma is one of the  key steps.
\subsection{Positions revisited}\label {sec:posrev}

Consider any standard state $E=(W,B,\cX,\es,\mu)$ together with a \esolu $(\alp,\sig)$.
 We introduce a precise notion of equivalence $\equiv$ between positions (and intervals) of $\sig(W)$, which we introduce now. The idea is that whenever we modify a \solu  $\sig$ at a position $i$, then we must modify  $\sig$ at all equivalent positions $j \equiv i$ in order to keep the property of being a \solu.  
 Moreover, $\equiv$ should be the finest equivalence relation with that property. For example, when we compress a factor $ab$ where $a$, $b$ are letters, then we want to compress only certain occurrences of $f(ab), f(\ov b \ov a)$ (where $f\in H$) and not all of them.

As explained in \prref{sec:mappos} there is a canonical mapping from the set $[1,\abs{\sig(W)}]$ 
 to the set of  positions in $W$. By $\lam(i)$ we denote the label of a position $i$ in $\sig(W)$.
Recall also the notion of duality: 
if $[l,r]$ is an interval in $[1,\abs{\sig(W)}]$, then $\ov{[\ell,r]}= [\ov r, \, \ov \ell]$ denotes the 
dual interval, where $\ov i = |\sig(W)|+1 -i$ for all $1\leq i \leq |\sig(W)|$. 
According to the definition of standard states, the set of twisted variables which appear in $W$ 
can be written as $\cY= (H\times \cX)$ and we have $\cX\sse \cX$. 
It is convenient to  fix a subinterval $I(X)$ of $[1,\abs{\sig(W)}]$ for each $X\in \cX$ such that 
$\ov{I(X)}= I(\ov X)$ as follows. Consider $\os{X,\ov X} \sse \cX$, then 
without restriction we have $X=X_i$ for some unique $1\leq i \leq |\cX|/2$ (and hence  $\ov{X}=\ov {X_i}$). Choose for $I(X)$ the left-most maximal interval  $[\ell_X, r_X]$ in $\sig(W)$ which is mapped to a position labeled by $X_i$. \Ip 
$r_X-\ell_X+1= \abs{\sig(X} $.
We let 
$\ov{I(X)}= I(\ov X)$. By the specific structure of $W$ being an \exe, we see that 
$\ov{I(X)}$ is the right-most maximal interval in $\sig(W)$ which is mapped to a position labeled by $\ov{X_i}$. 
To have a notation we let $2m=\abs{\sig(W)}$ and 
\begin{align} \label{eq:IcX}
I(\cX)&= \bigcup\set{I(X)}{X\in \cX},\\
I(B) &= \set{i\in [1,2m]}{i \text{ or $\ov i$ is mapped to a position labeled by a constant}}.
\end{align}
Note that $I(\cX)$ and $I(B)$ are disjoint sets of positions: if $i\in I(\cX)$, then there is some $X$ such that $i$ is mapped to a position labeled by $X$ and $\ov i$ is mapped to a position labeled by $\ov X$.
The next idea is to identify positions in $\sig(W)$ based on the fact that we can write $W$ in the form $W=\#U\#\, \# \ov V\#$ such that $\sig(W)=\sig(\ov W)\iff 
\sig(U)=\sig(V)$. In pictures we intend to place the positions of 
$\sig(U)$ and $\sig(V)$ on top of each other. The intuition is clear: we have 
$[1,2m]= \os{1\lds m, \, \ov m \lds \ov 1}$. The positions of 
$\# \ov V\#$ cover $\ov m \lds \ov 1$. Hence, we can think that $\#V\# = \ov {\#  \ov V\#} $ uses the same set of positions as  $\#U\#$, namely the set $\os{1\lds m}$. 
Thus, every $i\in [1,2m]$ has always two interpretations: for 
$i\leq m$ as a position either in $\#U\#$ or in $\#V\#$, for $m<i$ the situation is dual. Let us make this intuition formal. 

The  mapping from $[1,2m]$ to the positions of $W$ induces a relation 
$${\leadsto} \sse  [1,2m]\times ( I(\cX)\cup I(B)).$$ We define ${\leadsto}$ inductively.
If $i$ is mapped to a position in $W$ which labeled by a constant, then we have $i\in I(B)$ and we let $i \leadsto i$ and $\ov i  \leadsto  \ov i$.
In the other case $i$ is mapped to a position labeled by $Y=(f,X)$ for some $f\in H$ and  $X\in \cX$. Let $\ell$ the leftmost position in $[1,2m]$ which 
is mapped to the same position as $i$. Then we can write 
$i=\ell +k$ and we find $j=\ell_X +k$. In this case we let  $i \leadsto j$ 
and $\ov i \leadsto \ov  j$. Note that this implies $\lam(i) = f(\lam(j))$. 
The position $\ov  j$ belongs to $I(\ov X)$.

Up to duality, there are three cases:
\begin{enumerate}
\item $i\in I(B)$ and $\ov i\in I(B)$. Then there is only one $j$ such that $i \leadsto j$. 
\item $i\in I(B)$ and $\ov i\notin I(B)$. Then we have $i\leadsto i\in I(B)$ and 
$i\leadsto j\in I(\cX)$. Since $I(\cX)\cap I(B)=\es$, we have $i\neq j$.
\item $i\notin I(B)$ and $\ov i\notin I(B)$. Then $i\leadsto j$ and $\ov i\leadsto \ov k$ with $\os{j,\ov j, k, \ov k}\sse I(\cX)$. Hence, there 
there are  at most two  $j,k\in I(\cX)$ such that $i \leadsto j$ and $i \leadsfrom k$.
\end{enumerate}

Let us explain the meaning of this ``$\leadsto$'' relation 
by considering a local equation $u(f,X)w(g,\ov Y)v = uZv$. In $W$ this local equation corresponds to some factorization 
$$W = u_1 \#u(f,X)w(g,\ov Y)v\# u_2\;\ov{u_2}\, {\#}\, \ov{v} \ov{Z} \ov{u}\, {\#}\, \ov{u_1}.$$
Let $\ell = \abs{\sig(u_1 \#)} +1$ and $r=\abs{\sig(XwYv} -1$. Then the interval
$\sig(W)[\ell,r]$ is labeled by $u\sig(f,X)w(g,\ov Y)v$; and, 
since $\sig$ is a \solu,  we have $u\sig(f,X)w(g,\ov Y)v=u\sig(Z)v$.
For  each $i\in [\ell,r]$ we have $i\leadsto j$  where either $i=j$ or $j\in I(X)\cup I(\ov Y)$. If $j\in I(X)\cup I(\ov Y)$, then $\ov i \leadsto \ov k\in I(\ov Z)$ and therefore $ i \leadsto k\in I( Z)$, too. 
The positions of $u,w,v$ are visible in $W$, but \wrt 
$\ov W$ this true only for the positions of $u$ and $v$. 
For the positions of $u$ and $v$ the relation $\leadsto$ is the identity. 
The  relations are depicted in  \prref{fig:ontop}: the relation ${\leadsto}$ is given by
the positions in the middle row to the top and bottom row. Let $j \leadsfrom i$ denote $i\leadsto j$.  If $j \leadsfrom i \leadsto k$, then we write $j\sim k$.

Consider any $i\in  [1,2m]$ and $j,k\in I(\cX)\cup I(B)$ such that $j \leadsfrom i \leadsto k$.
(The interesting case is $j\neq k$.)  Hence, we have $j\sim k$ and in the pictures we put the positions $j$ and $k$  on top of each other. \prref{fig:ontop} gives an example, where $32 \sim 10\lds 35 \sim 13$, $98\sim 14, 99\sim 15$, and  $\ov{45} \sim 16\lds \ov{42} \sim 19$. 

\begin{figure}[h!]
\begin{center}

\begin{tikzpicture}[
	xscale=0.8,
	yscale=0.6
]

\def\linesep{0} 
\node at (.5,2.5) {$a$};
\node at (1.5,2.5) {$a$};
\node at (4,3.5) {$I(X)$};
\node at (6.5,2.5) {$a$};
\node at (7.5,2.5) {$b$};
\node at (10,3.5) {$I(\ov{Y})$};
\node at (12.5,2.5) {$b$};
\node at (13.5,2.5) {$b$};

\node at (1.5,0.5) {$a$};
\node at (0.5,0.5) {$a$};
\node at (12.5,0.5) {$b$};
\node at (13.5,0.5) {$b$};

\node at (7,-0.5) {$I(Z)$};

\draw (2,0+\linesep) rectangle (12,1-\linesep);

\foreach \i in {2,6,8,12} {
	\draw  (\i,2+\linesep) -- (\i,3-\linesep);
}

\foreach \i in {32,33,34,35} {
	\node at (\i-29.5,2.5) {\i};
}
\foreach \i in {42} {
	\node at (\i-33.5,2.5) {$\ov{45}$};}
\foreach \i in {43} {
	\node at (\i-33.5,2.5) {$\ov{44}$};}
\foreach \i in {44} {
	\node at (\i-33.5,2.5) {$\ov{43}$};}
\foreach \i in {45} {
	\node at (\i-33.5,2.5) {$\ov{42}$};}

\draw (0,1+\linesep) rectangle (14,2-\linesep);

\foreach \i in {92,...,105} {
	\node at (\i-91.5,1.5) {\i};
}

\draw (2,2+\linesep) rectangle (6,3-\linesep);
\draw (8,2+\linesep) rectangle (12,3-\linesep);

\foreach \i in {2,12} {
	\draw  (\i,0+\linesep) -- (\i,1-\linesep);
}

\foreach \i in {10,...,19} {
	\node at (\i-7.5,0.5) {\i};
}
\end{tikzpicture}
\caption{A local equation $bb(f,X)ab (g,\ov Y)aa= bbZaa$. The left side occupies positions $92\lds 105$ and dual of the right side (\ie $\ov b\,\ov b\, \ov Z\,\ov a\,\ov a$) positions $\ov{105} \lds\ov{92}$. 
Moreover, $\sig(W)[92,93]= aa$, $\sig(W)[98,99]= ab$, and $\sig(W)[104,105]=bb $. 
}
\label{fig:ontop}\end{center}
\end{figure}
Since $i \leadsto j\iff \ov i \leadsto \ov j$ we have 
\begin{equation}\label{equiv:cruc}
j\sim k \iff \ov j\sim \ov k.
\end{equation}
Moreover, $j\sim k$ implies that there are $f,g\in H$ such that
$f(\lam(j))= \lam(i) = g(\lam(k))$. Thus, $j\sim k$ ensures $\lam(j)\in H\cdot \lam(k).$

We have $I(\cX)\cup I(B)\sse  [1,2m]$ and  ${\sim} \sse  [1,2m]\times  [1,2m]$ is a symmetric relation. It is also reflexive
 on $I(\cX)\cup I(B)\times I(\cX)\cup I(B)$.

By $\approx$ we denote the reflexive and transitive closure of $\sim$. However, the relation $\approx$  is too fine, in general. Since $i\sim j$ implies $\lam(i)\in H\cdot \lam (j)$, we cannot expect that $i\approx \ov i$, because $\lam(\ov i)= \ov{\lam(i)}$
is typically not in $H\cdot \lam (j)$. Clearly, if we intend to change the label at position $i$ from, say, $a$ to $c$, then we must change the label 
 at position $\ov i$ from $\ov a$ to $\ov c$.

In the following, we write $i\darc j$ if $j= \ov i$ and we define $\equiv$ to be the equivalence relation over $[1,2m]$ which is generated by ${\sim}\cup{\darc}$. We have $\approx\sse \equiv$, but we have just seen that these relations are different, in general.  
Since $i\sim j \iff \ov{i}\sim \ov{j}$ by \prref{equiv:cruc}, we have
$$ i \darc \ov i \sim j \iff i \sim \ov j \darc j.$$
Hence,
\begin{equation}\label{equiv:equiv}
i \equiv j \iff \text{ either } i \approx j \text{ or  } i \approx \ov j.
\end{equation}

We extend the notation above to intervals. Let $i$, $j$ be positions in $\sig(W)$ such that $i\leadsto j$, and let ${p}\in \N$. Assume (by symmetry) that we have $i\leadsto j\in I(X)$ due to mapping the position $i$ of $\sig(W)$ to a position $q$ in $W$ which is labeled by a twisted variable $(f,X)$. If the position $i+p$ is also mapped to the  same  position $q$, then all positions in  the interval $[i,i+{p}]$ are mapped to $q$. 
We then write
$$[i,i+{p}]\leadsto [j,j+{p}].$$
As above we now define the relation $\sim$ on intervals. 
Again, we let $\approx$ be the generated equivalence relation, now on intervals. 
Finally, we relate an interval $[l,r]$ with $1 \leq l \leq r \leq 2m$ to the 
interval $[\ov r,\, \ov l]$ via 
$$[l,r] \darc \ov {[l,r]}= [\ov r,\, \ov l].$$
Thus, we also extend $\equiv$ to the equivalence relation on intervals which is generated by $\sim$ and $\darc$. Having this, the general form of (\ref{equiv:equiv}) becomes
\begin{equation}\label{eq:kequiv}
[i,i+{p}] \equiv [j,j+{p}] \iff  [i,i+{p}]\approx [j,j+{p}] \vee [i,i+{p}] \approx [\ov{j+{p}},\ov j].
\end{equation}
In the following we say that two positions (or intervals) are \emph{equivalent}
if they are related by $\equiv$. 

\begin{lemma}\label{lem:expan}
Let ${t}\geq 1$ 
and $\sig$ be a solution at a standard state $E=(W,B,\cX,\es,\mu)$  such that 
$|\sig(X)| \geq 2{t}$ for all $X\in \cX$. 
For each 
$X\in \cX$ write $\sig(X)=uxv$ with $|u|=|v|={t}$. Let
$E'=(W',B,\cX',\es,\mu')$ and  $\sig'$ denote state and \solu which we obtain by following a \subst \tra defined by $\tau(X) = uXv$ for all $X$. 
Let $I'(\cX')$ be the set of positions according 
to \prref{eq:IcX} and $\approx'$ the equivalence relation on intervals with respect to $\sig'$.
Let $i$ and $j$ positions in
$I'(\cX')$ 
such that   $i \approx' j$, then we have $[i-{t},i+{t}] \approx [j-{t},j+{t}]$ 
with respect to  $E$ and $\sig$.
\end{lemma}

\begin{proof} Let $X\in \cX'$. Then $X\in \cX$ and 
if $I(X) = [\ell_X,r_X]$, then the corresponding interval 
$I'(X)$ \wrt $\sig'$ is $I'(X)= [\ell_X+{t},r_X-{t}]$.
Thus, if $i \approx' j$ with $i\in I'(X)$ and $j\in I'(Y)$, then we obtain a ``domino tower'' as depicted in \prref{fig:lemexpan}.
The intervals \wrt $I'(\cX')$ are the white blocks and $i$ and $j$ are arranged such that $i$ sits along a vertical column above $j$. We obtain the corresponding tower \wrt $I(\cX)$ by adding the grey borders of length $p$ at each side  in order to switch from $I'(Z)$ to $I(Z)$. 
Thus,  $i \approx j$  \wrt  $W'$ implies $[i-{t},i+{t}] \approx [j-{t},j+{t}]$.
\end{proof}

 \begin{figure}[h!]
\begin{center}
\begin{tikzpicture}[scale=.6]

\draw[ FillBrown,fill] (11,6) -- (11,5)--(8.5,5) -- (8.5,6)-- cycle;
\draw[ FillBrown,fill] (1,6) -- (1,5)--(3.5,5) -- (3.5,6)-- cycle;
\draw[ FillBrown,fill] (5.5,5) -- (5.5,4)--(8,4) -- (8,5)-- cycle;
\draw[ FillBrown,fill] (21,5) -- (21,4)--(18.5,4) -- (18.5,5)-- cycle;
\draw[ FillBrown,fill] (19,4) -- (19,3)--(16.5,3) -- (16.5,4)-- cycle;
\draw[ FillBrown,fill] (2,4) -- (2,3)--(4.5,3) -- (4.5,4)-- cycle;
\draw[ FillBrown,fill] (1,3) -- (1,2)--(3.5,2) -- (3.5,3)-- cycle;
\draw[ FillBrown,fill] (11,3) -- (11,2)--(8.5,2) -- (8.5,3)-- cycle;
\draw[ FillBrown,fill] (4,2) -- (4,1)--(6.5,1) -- (6.5,2)-- cycle;
\draw[ FillBrown,fill] (20,2) -- (20,1)--(17.5,1) -- (17.5,2)-- cycle;

\draw  (11,5) -- (1,5) -- (1,6) -- (11,6) -- cycle;
\draw  (5.5,5) -- (21,5) -- (21,4) -- (5.5,4) -- cycle;
\draw  (19,4) -- (2,4) -- (2,3) -- (19,3) -- cycle;
\draw  (11,3) -- (1,3) -- (1,2) -- (11,2) -- cycle;
\draw  (4,2) -- (20,2) -- (20,1) -- (4,1) -- cycle;

\draw[gray] (8,6) -- (8,1);
\draw[gray] (8.5,6) -- (8.5,1);
\node[above] at (5,5.1) {$X$};
\node[below] at (15,2.0) {$Y$};

\draw [<->] (8.5,5.25) --(11,5.25);
\draw [<->] (1,5.25) --(3.5,5.25);
\draw [<->] (5.5,4.25) --(8,4.25);

\footnotesize
\node[align=left,above] at (8.25,6) {$i$};

\node[align=left,below] at (8.25,1) {$j$};

\tiny
\node[below] at (9.75,5.9) {${t}$};
\node[below] at (2.25,5.9) {${t}$};
\node[below] at (6.75,4.9) {${t}$};

\end{tikzpicture}
\caption{Example illustrating \prref{lem:expan}.}
\label{fig:lemexpan}\end{center}
\end{figure}

The \emph{distance} $d(i,j)$ between positions of $\sig(W)$ is denoted as usual: 
$d(i,j) = \abs{j-i}$. We continue with the same the notation as in \prref{lem:expan}.
We apply the lemma  with ${t}=10 \abs H \eps$. Recall that we fixed the parameters such that  
$ \eps= 30n$ and $\del = \abs H \eps$.
Hence, ${t} = 10 \del$. 
\begin{lemma}\label{lem:fareps}Let $t=10 \del$ and suppose that 
$E=(W,B,\cX,\es,\mu)$  satisfies the shrinking pair condition
(\prref{def:pccond}) \wrt $\sig$. Let 
${Z}\in \cX'$ be a variable with 
$I'({Z})= [l,r]$ (and hence, $I({Z})= [l-10 \del,\,r+10 \del]$); and let 
$l\leq i < j \leq r$ such that $i \approx' j$ with respect to $\sig'$. Then we have 
$d(i,j) > \eps$. 
\end{lemma}

\begin{proof}
According to \prref{lem:expan} we have $[i-{t},i+{t}] \approx[j-{t},j+{t}]$ \wrt $\sig$. Next, we choose ${k}\in \N$  as large as possible so that $[i-{k},i+{t}]$ is a subinterval of $I({Z})$ and $[i-{k},i+{t}] \approx [j-{k},j+{t}]$ with respect to $\sig$.
By induction on the number of steps using the relation 
$\sim$, this implies that there is some interval 
$[{\ell},{\ell}+{t}+{k}]\sse I(X')$ such that both, first
\begin{align*}
 [j-{k},j+{t}] \approx [i-{k},i+{t}] \approx [{\ell},{\ell}+{t}+{k}]
\end{align*}
and second, ${\ell}$ is the first  position in $I(X')$, see \prref{fig:fareps}. 
Assume, by contradiction, 
$d(i,j) \leq \eps$. Then, $\sig(W)[i-{k},i+{t}]$ and $\sig(W)[j-{k},j+{t}]$ are twisted conjugate with a positive offset $d(i,j)$ which is at most $\eps$. This implies that 
$\sig(W)[i-{k},i+{t}]$ is a \velo \delper word by \prref{cor:twconj}. Therefore, the  prefix 
$\sig(W)[\ell,\ell+{t}+{k}]$ of $\sig(X')$ is a \velo \delper word. This contradicts the hypothesis that $E$ satisfies the shrinking pair condition. 
\end{proof}

\begin{figure}[h!]
\begin{center}
\begin{tikzpicture}[scale=.5]

\draw[ FillBrown,fill] (4,6) -- (4,0)--(4.5,0) -- (4.5,6)-- cycle;
\draw  (11,5) -- (1,5) -- (1,6) -- (11,6) -- cycle;
\draw  (3,5) -- (15,5) -- (15,4) -- (3,4) -- cycle;
\draw  (19,4) -- (2,4) -- (2,3) -- (19,3) -- cycle;
\draw  (11,3) -- (1,3) -- (1,2) -- (11,2) -- cycle;
\draw  (4,2) -- (20,2) -- (20,1) -- (4,1) -- cycle;
\draw  (0,0) -- (10,0) -- (10,1) -- (0,1) -- cycle;

\draw[gray] (7.75,6) -- (7.75,0);
\draw[gray] (8.25,6) -- (8.25,0);

\node[above] at (3,5) {${Z}$};
\node[below] at (2,1) {${Z}$};
\node[above] at (18.4,1) {$X'$};
\node[above] at (4.25,1) {$\ell$};

\draw [->] (4.25,-.65) --(4.25,0);
\footnotesize
\node[align=left,above] at (8,5) {$i$};
\node[align=left,below] at (7,1) {$i$};
\node[align=left,below] at (8,1) {$j$};
\node[align=left,below] at (9,5.8) {$j$};

\tiny
\node[align=left,below] at (4.25,-0.5) {$j-k \approx \ell = $ first position in $I(X')$};
\end{tikzpicture}
\caption{Illustration for the proof of  \prref{lem:fareps}.}
\label{fig:fareps}
\end{center}
\end{figure}

\subsection{Red positions}\label{sec:redpos}
We use the notation of \prref {sec:posrev}.
Let $[l,r]$ be a maximal interval in $\sig(W)$ which is mapped to a position in $W$ which is labeled by some $(f,X)$ where $f\in H$ and $X\in \cX$. Thus, the factor $\sig(W)[l,r]$ is equal to the word $\sig(X)$. The
positions $l$ and $r$ at the borders of $[l,r]$ play a special role because if there is a factor $ab =\sig(W)[l-1,l]$, then we cannot compress $ab$ because 
$ab$ is ``crossing'': compressing $ab$ might be ``dangerous''. In order to signal  ``danger'' we color the first and the last position in each interval $I(X)$ \emph{red}.  Moreover, whenever $i\equiv j$ holds in $\sig(W)$, then  
color $j$ \emph{red}, too. For example, if we have a situation as depicted in \prref{fig:reddomino}, then the red color at the last position of $I(X)$ and the red color at the first position of $I(Z)$ yields two red columns. Note that the first red position in $I(X)$ and the last red position in 
in $I(\ov X)$ are equivalent: these are dual positions.
For convenience, we also color all positions in $\sig(W)$ \emph{red} which are labeled by the marker symbol $\#$. 
If the label of $i$ is  $\#$, then $i\equiv j\iff j\in \os{i,\ov i}$.

Since $i\equiv \ov i$ for all positions, it  follows that there are at most $n$ pairwise different equivalence classes of red positions. This counting will be used later.

\begin{figure}[h!]
\begin{center}
\begin{tikzpicture}[scale=.6]

\draw[ FillRed,fill] (5,6) -- (5,0)--(5.5,0) -- (5.5,6)-- cycle;
\draw[ FillRed,fill] (11,6) -- (11,0)--(10.5,0) -- (10.5,6)-- cycle;

\draw  (11,5) -- (1,5) -- (1,6) -- (11,6) -- cycle;
\draw  (5,5) -- (22,5) -- (22,4) -- (5,4) -- cycle;
\draw  (19,4) -- (3,4) -- (3,3) -- (19,3) -- cycle;
\draw  (11,3) -- (1,3) -- (1,2) -- (11,2) -- cycle;
\draw  (4,2) -- (20,2) -- (20,1) -- (4,1) -- cycle;
\draw  (2,0) -- (22,0) -- (22,1) -- (2,1) -- cycle;

\draw[gray] (7.75,6) -- (7.75,0);
\draw[gray] (8.25,6) -- (8.25,0);
\node[above] at (3,5.1) {$X$};
\node[below] at (17,1.0) {$Y$};
\node[above] at (17,4.1) {$Z$};

\draw [->] (5.25,-.25) --(5.25,0);
\draw [->] (10.75,-.25) --(10.75,0);

\footnotesize
\node[align=left,above] at (8,5.1) {$i$};

\node[align=left,below] at (8,0.9) {$j$};
\tiny
\node[align=left,below] at (5.25,0) {(red)};
\node[align=left,below] at (10.75,0) {(red)};
\end{tikzpicture}
\caption{Red positions induced by the red borders in $I(X)$ and $I(Z)$.}
\label{fig:reddomino}\end{center}
\end{figure}

Consider an interval of length two $I=[i,i+1]$ without red position. 
The idea is to compress 
 $I$ into a single position. The problem is overlapping:
we might have $[i-1,i]\equiv [i,i+1]$ or  $[i,i+1]\equiv [i+1,i+2]$. 
Note that 
$[i-1,i]\equiv [i,i+1]$ implies $i-1\equiv i$ or $i-1\equiv i+1$. Similarly, 
$[i,i+1]\equiv [i+1,i+2]$ implies $i\equiv i+1$ or $i\equiv i+2$.
Therefore, we start with intervals of length $4$ where all four positions are inequivalent: This enables us  to compress the middle interval of length $2$. 
We shall use the following lemma.

\begin{lemma}\label{lem:fgood}
Let $[i-1,i,i+1,i+2]$ be an interval of length $4$ without any red position and where the positions are pairwise inequivalent. Consider 
 $[i,i+1]\equiv [j,j+1] \equiv [k,k+1]$. Then there are two cases:
\begin{enumerate}
\item $[j,j+1] \cap [k,k+1]= \es$, 
\item 
$k=j$ and $[j,j+1] \not\approx [\ov{k+1},\ov k]$.
\end{enumerate}
\end{lemma}

\begin{proof}
Notice that  each of the intervals $[i-1,i,i+1,i+2], [j-1,j,j+1,j+2],  [k-1,k,k+1,k+2]$
is without red positions. We may assume that $[j,j+1] \cap [k,k+1]\neq \es$ because otherwise we are done. 

First, let $j=k$. By contradiction assume $[j,j+1] \approx [\ov{k+1},\ov k]$. 
Then $j+1\approx \ov k\darc k$ which implies $k=j\equiv j+1$.
Since $[i,i+1]\equiv [j,j+1]$ we obtain $i\equiv i+1$. This was excluded.

In the second case we have  $j\neq k$. Let us show that $j\neq k$ and $[j,j+1] \cap [k,k+1]\neq \es$ leads again to a contradiction. Since $j\neq k$ we cannot have $[j,j+1]= [k,k+1]$. Hence $j+1=k$ or $k+1 = j$. 
By symmetry in $j$ and $k$, we may assume $j+1=k$. 

We cannot have $[j,j+1] \approx [k,k+1]$ because then $j\equiv k$, but $k=j+1$,  and hence,
$j\equiv j+1$. This is impossible. Thus, $[j,j+1] \approx [\ov{k+1},\ov k]$
and $j \equiv k+1= j+2$. 
We remember $j \equiv j+2$.
If $[i,i+1] \approx [j,j+1]$, then (as no position is red)
$[i,i+1,i+2] \approx [j,j+1,j+2]$ implies $i \approx i+2$. This is impossible. 
Hence, the last option is $[\ov j -1,\ov j] \approx [i,i+1]$ and  $[\ov j-2,\ov j -1,\ov j] \approx [i-1,i,i+1]$. However, $j \equiv j+2$ implies $\ov j \equiv \ov j-2$. 
We have again a contradiction as $i-1 \not\equiv i+1$.
\end{proof}

 \prref{ex:doweneedthis} indicates why the assertion in \prref{lem:fgood} only holds in the middle interval $[i,i+1]$ of $[i-1,i,i+1,i+2]$, in general.  

\begin{example}\label{ex:doweneedthis}
We don't exclude that $H$ acts with involution. Thus, there might be an $a\in B$ and $f\in H$ such that $f(a) = \ov a$.
Consider the equation
$\ov X= (f,X)$ with the solution  $\sig(X) = dcba\ov b \ov c \ov d$ and where 
$f(x) = x$ for $x = b,c,d$.  Then we have 
$$\sig(\ov X) = dcb \ov a \ov b \ov c \ov d = f(\sig(X)).$$
The positions of $\sig(X)$ can be identified with $\os{1,\ldots, 7}$ 
with $i\equiv 8-i$ for all positions $1\leq i\leq 7$. Since positions $3$
and $5$ are equivalent, the interval $[2,5]$ contains equivalent positions. 
The four  
positions in the interval $[1,4]$ are pairwise inequivalent. 
However,  $[3,4]$ intersects with 
$[4,5] = [4,\ov 3]$. Thus, later on 
we cannot compress the interval $[3,4]$ corresponding to the pair 
$ba$. On the other hand, there is no obstacle to compress 
the interval $[2,3]$ which is labeled by $cb$. 
\end{example}

\begin{lemma}\label{lem:len10}
Let $\sig$ be a solution at a standard state $E=(W,B,\cX,\es,\mu)$ and 
$I=[p, p+9]$ be an interval of length $10$ in $\sig(W)$ without any red position such that
$i\approx j$ implies $i=j$ for all $i,j\in I$. Then $I$ contains a subinterval $Q$ of length 
$4$ where all positions are pairwise inequivalent.
\end{lemma}

\begin{proof}
 For simplicity of notation let $I=[1,10]$.
 If all positions in $[4,5,6,7]$ are pairwise 
inequivalent, we are done. 
\Ip there are $1\leq i < j\leq 10$ such that 
$i \equiv j$. Since, $i \not\approx j$ this implies $i \approx \ov j$
by (\ref{equiv:equiv}).
This in turn means that we cannot have $i \equiv j\equiv k$ where
$1\leq i < j<k\leq 10$ because this would lead to $i\approx k$
via $i \approx \ov j\approx k$. We say that $j$ is the partner of $i$ if 
$i\neq j$ but $i \equiv j$. We conclude that every $i\in I$ has at most one partner. 

We know $4+i\equiv 4+j$ for some $0\leq i < j\leq 3$. Hence 
$4+i\approx \ov{4+j}$. Let $1\leq k \leq 3$. Since $I$ is without red positions, this implies 
$$4+i-k \approx \ov{4+j}-k = \ov{4+j+k} \darc 4+j+k \equiv 4+i-k.$$
This means that every position 
$q\in Q=\os{4+i-3,4+i-2,4+i-1,4+i}$ has one partner 
in $P=\os{4+j+3,4+j+2,4+j+1,4+j}$. Since $Q\cap P=\es$ and $Q\cup P\sse I$, we are done: in $Q$ all four positions are pairwise inequivalent.
\end{proof}

\begin{definition}\label{def:comint}
We say that an interval $I= [i,i+1]$ in $\sig(W)$ is \emph{\cgood} if the following conditions hold. 
\begin{itemize}
\item Neither $i$ nor $i+1$ is  red. 
\item The positions $i$ and $i+1$ are visible,
\item Whenever $ [i,i+1]\equiv J\equiv K$, then either $J=K$ or $J\cap K=\es$.
\end{itemize}
\end{definition}

\begin{remark}\label{rem:nocon}
The definition of a \cgood interval $I=[i,i+1]$ excludes $i\approx i+1$. 
Indeed, since neither $i$ nor $i+1$ is  red, $i\approx i+1$ implies 
$[i-1,i,i+1]\approx [i,i+1,i+2]$, hence an overlap
$[i,i+1]\approx [i+1,i+2]$. 
However,  $I\approx [\ov{i+1},\ov i]$ is allowed. Hence, it may happen that $i\equiv i+1$. If such an $I$ is labeled with $ab$, then 
there is some $f\in H$ with $f(ab) = \ov b \ov a$. Hence, $f(a)=\ov b$ and 
$f(b) = \ov a$.  We deduce that we obtain a consistent  labeling for 
$$ I= [i,i+1] \approx [\ov{i+1},\ov i] \darc  [i,i+1].$$
Thus, we may compress $[i,i+1]$ into a single position with label $c$ and therefore, due to  $I \approx [\ov{i+1},\ov i]$,  we must also compress  $[\ov{i+1},\ov i]$ into a single position with label 
$f(c)$; and due to  $[\ov{i+1},\ov i]\darc  [i,i+1]$, we have to compress  $[\ov{i+1},\ov i]$ into a single position with label 
$\ov c$. But this is fine, we have $h(c) = ab$, $h(\ov c) = \ov b\, \ov a$, and $f(c)= \ov c$. 
\end{remark}

\subsection{The procedure}\label{sec:outpaircomp}
Recall that we have fixed  $\del = \abs H \eps$ and $\eps= 30n$.  Thus, $\del\in \Oh(\abs H n)$ and $\eps\in \Oh(n)$. We start at a standard state $E=(W,B,\cX,\es,\mu)$ together with an \esolu $(\alp,\sig)$ where none of the meta rules apply.
\Ip $\abs{\sig(X)} \geq 30 \del$ and  $\cX\sse \cX$ with $\cY= H\times \cX$. Hence, by definition: 
$$\sum_{Y\in \cY} |W|_Y \leq \sum_{Y\in \cX} |\Winit|_Y\leq n.$$

All local equations have the form 
$u(f,X)w(g,Y)v= uZv$. (As before dummy variables are allowed.) 
We define the equivalence relations $\approx$ and $\equiv$
over the set of positions of $\sig(W)$ as defined in \prref{sec:posrev}.
Let $E$ be a standard state with equation $W$ and \esolu $(\alp,\sig)$. 
Once, we found a \cgood interval $I$ in $\sig(W)$, we may call the following procedure for that interval.

\medskip
\noindent\textsc{\textbf{begin procedure}} (compress a good interval $I$)
\begin{enumerate}
\item Let $a,b\in B$  and let $ab$ be the label of  the \cgood interval $I=[i,i+1]$. 
Choose  a fresh letter $c$ with stabilizer $H_c=H_a\cap H_b$; and define a $B$-\morph from $B'= B \cup \set{f(c),f(\ov c)}{f\in H}$ to $B^2$  by $h(c) = ab$. 
Whenever $[i,i+1]\approx [j,j+1]$, then the label of $[j,j+1]$ is $f(ab)$ for some $f\in H$. Replace each of the intervals $[j,j+1]$ (resp.~$[\ov {j+1} ,\ov j]$) by a single new position and label this position with $f(c)$ 
(resp.~$f(\ov c)$). (There is no conflict in this relabeling, see \prref{rem:nocon}.) Since there is no red position
in $[j,j+1]$ and $[\ov{j-1} ,\ov j]$,  none of the intervals $[j,j+1]$ or $[\ov{j-1} ,\ov j]$ is ``crossing''. So, this gives a new 
but shorter equation $W'$. We have $h(W')=W$ and new \solu $\sig'$ such that $h\sig'(W')= \sig(W)$. 
\item Follow the corresponding \comp \tra
$$E=(W,B,\cX,\es,\mu) \arc h (W',B',\cX,\es,\mu')=E'.$$
We have a new state $E'$ with an \esolu $(\alp',\sig') = (\alp h,\sig')$. 
There is  also new numbering for the positions, but the red positions can still be identified.  
\end{enumerate}
\noindent\textsc{\textbf{endprocedure}}

\bigskip
We are now ready to define the procedure ``pair compression'' which uses 
``compress a good interval'' as a subroutine.

\bigskip
\noindent\textsc{\textbf{begin procedure}} (pair compression)
\begin{enumerate}
\item 
For every $X\in \cX$  write $\sig(X) = uxv$ with $\abs u= \abs v = 10 \del$. Follow a \subst \tra x
$$E=(W,B,\cX,\es,\mu) \arc \eps (W',B,\cX,\es,\mu')=E'$$
defined by the \subst $\tau(X) = uXv$ and $W'=\tau(W)$. This \tra satisfies the 
\fopro with the new \esolu $(\alp,\sig')$ where 
$\sig'(X) = x$ for all $X\in \cX$. Recall that $\sig(W)= \sig'(W')$.

After the preceding step, define the intervals $I'(X)$ \wrt $\sig'$  
as done in \prref {sec:posrev}. Use the \emph{red} color for the first and the last position in each $I'(X)$. Color in  red all equivalent 
positions in $\sig'(W')$ of red positions \wrt $\equiv'$, too. See \prref{sec:redpos}.
\item Rename $E',W',\mu',\sig',\approx',\equiv'$ as $E,W,\mu,\sig,\approx,\equiv$.
\item Define the alphabet $\Bold=B$. During the following loop we keep the invariant $\Bold\sse B$.
\item
\noindent\textsc{\textbf{while}} $\sig(W)$ contains a \cgood interval $I= [i,i+1]$ 
with a label in $\Bold^2$
\\ 
\noindent\textsc{\textbf{do}}
\begin{enumerate}
\item Choose any \cgood interval $I$ in $\sig(W)$.
\item Run the procedure ``compress $I$''.
\item Rename $E',B',W',\mu',\alp',\sig',\approx',\equiv'$ as $E,B,W,\mu,\alp,\sig,\approx,\equiv$ and transfer the induced coloring of red positions. 
\end{enumerate}
\noindent\textsc{\textbf{endwhile}}
\item Perform an alphabet reduction at the standard state $E$. 
\item Rename $E,B,\cX,W,\mu,\alp,\sig$ as $E_{r'},B_{r'},\cX_{r'},W_{r'},\mu_{r'},\alp_{r'},\sig_{r'}$.
\end{enumerate}
\noindent\textsc{\textbf{endprocedure}}

\begin{remark} The procedure ``pair compression'' may  not 
actually succeed in compressing any pair. Its first step always ``pops out" letters to make the equation longer (by $20
  \del$). After that if no pair is compressed, the procedure leaves the equation longer than before it was called.
This is intentional: if the equation becomes long enough, then one of  \delper- or pair compression is guaranteed to reduce the equation size by a positive fraction.
\end{remark}

\subsection{The end of pair compression ends the compression round}\label{sec:endcompsr}
We began the compression round at a standard state
$E_{r}= (W_{r},B_{r},\cX_{r},\es,\mu_{r})$ with an \esolu 
$(\alp_{r},\sig_{r})$.
We ended the \delper compression 
either by entering a final state or, in the other case, at a standard state $E_{s}= (W_{s},B_{s},\cX_{s},\es,\mu_{s})$ with an \esolu 
$(\alp_{s},\sig_{s})$ such that 
\begin{equation}\label{eq:Wrs}\Abs{E_{s},\alp_{s},\sig_{s}}\leq  \Abs{E_{r},\alp_{r},\sig_{r}} \text{ and } 
\Abs{W_{s}} \leq 
\Abs{W_{r}} + 20 \del n.
\end{equation} 
We started the pair compression  at the standard state $E_{s}= (W_{s},B_{s},\cX_{s},\es,\mu_{s})$ with the \esolu 
$(\alp_{s},\sig_{s})$. 
Compression took place only  for \cgood intervals which where labeled 
by words $ab$ with $a,b\in \Bold = B_{s}$. Each compression reduced the length of the equation because a good interval consists of two visible positions. 
Thus, at most $\abs{W_{s}}$ compressions were possible; and this shows that we did not  introduce more 
than $\abs H\cdot \abs{W_{s}}\in \abs H\cdot \abs{W_{r}} +\Oh(\del n)$ fresh  letters. Thus, every alphabet $B$ of constants we met during the entire round satisfied 
\begin{equation}\label{eq:boundB}
\abs{B}\in \abs H\cdot \abs{W_{r}} +\Oh(\del n).
\end{equation}
Now, let $E_{r'}= (W_{r'},B_{r'},\cX_{r'},\es,\mu_{r'})$ denote the standard state with the \esolu $(\alp_{r'},\sig_{r'})$ where we end the procedure ``pair compression''.
In the very first step of the procedure we followed a \subst \tra. This is enough to infer 
\begin{equation}\label{eq:progr}
\Abs{E_{r'},\alp_{r'},\sig_{r'}} < \Abs{E_{s},\alp_{s},\sig_{s}} \leq \Abs{E_{r},\alp_{r},\sig_{r}}. 
\end{equation}

\begin{proposition}\label{prop:pairshr}
Let  $E_{r}= (W_{r},B_{r},\cX_{r},\es,\mu_{r})$ be a standard state with an 
\esolu $(\alp_{r},\sig_{r})$ at the start of a compression round and $E_{r'}= (W_{r'},B_{r'},\cX_{r'},\es,\mu_{r'})$ be the standard state where we end the round with the 
\esolu $(\alp_{r'},\sig_{r'})$.
Then we have $$\Abs{W_{r'}} \leq \frac{29 \Abs{W_{r}}}{30} +\Oh(\del n).$$
Moreover, if $W$ is any equation which we see on the path
from $E_{s}$ to $E_{r'}$, then 
we have $\Abs{W} \leq \Abs{W_{r}} + \Oh(\del n)$.
\end{proposition}

\begin{proof}Each compression round has two phases. 
The \delper  compression stops at a standard state $E_{s}= (W_{s},B_{s},\cX_{s},\es,\mu_{s})$ with an 
\esolu $(\alp_{s},\sig_{s})$. By \prref{prop:delper} $\Abs{W_s} \leq \Abs{W_{r}} + 20\del n$ and all intermediate equations $W$ satisfy 
$\Abs{W} \leq \Abs{W_{r}} + \Oh(\del n)$.

Now, let $W$ be any equation being on the path
from $E_{s}$ to $E_{r'}$.  
The additional length for $W$ is due to the first step in pair compression when we substitute variable $X$ by $u_XXv_X$ with $\abs{u_X}=\abs{v_X}= 10 \del n$. This shows
$\Abs{W}\leq  \Abs{W_{r}} +40 \del n$. 
Moreover, 
$\Abs{W_{r'}}\in \abs{W_{r}} + \Oh(\del n)$ and $\abs{W_{s}}\leq \Abs{W_{s}}$.

Thus, by \prref{lem:enno} it suffices 
to prove 
\begin{equation}\label{eq:pairshr}
\abs{W_{r'}} \leq \frac{29 \abs{W_{s}}}{30} +\Oh(\del n).
\end{equation}
Let us have a closer look at a local equation $u(f,X)w(g,Y)v = uZv$ in $W_s$.
(We allow dummy variables). \Ip we can think that $W_s$ begins with a prefix
$U\#u_1Z_1v_1\#$ and ends with a suffix $\#\ov{v_1}\ov{Z_1}\ov{v_1}\#\ov U$. 
Having this, the word $W_s$ is covered by factors $\#u(f,X)w(g,Y)v\#$ and $\#\ov v \ov Z\ov u\#$.  
In the first steps of pair compression we follow \subst \tra{s} and a factor $\#u(f,X)w(g,Y)v\#$ becomes
$\#uf(u_X)(f,X)f(v_X)w g(u_Y)(g,Y)g(v_Y)v\#$ and $\#\ov v \ov Z\ov u\#$ becomes 
$\#\ov v\,  \ov{v_Z}\ov Z  \ov{u_Z}\,\ov u\#$.

Pair compression compresses all factors $uf(u_X)$, $g(v_Y)v$, $\ov v\,  \ov{v_Z}$ and  $\ov{u_Z}\,\ov u$ into single letters. This bounds the total increase by the first \subst \tra{s} by $2n$. 

We don't have such a simple bound for the factors $f(v_X)w g(u_Y)$ because the corresponding positions in $\sig(W_s)$ interact with the positions in $I(Z)$. Let $[\ell,r]$ be the interval in $[1,\, |\sig(W_s)| -1]$ corresponding to $f(v_X)w g(u_Y)$. Let us cut the interval in $[\ell,r]$ into a disjoint union of intervals, each of them having exact length $\eps= 30 n$. If a position belongs to any of these intervals of length $\eps= 30 n$, then we mark the position. Thus, at least 
$\abs w - \eps$ positions in the interval $[\ell,r]$ belonging to $f(v_X)w g(u_Y)$ are marked by these intervals. (We have no better bound since $X$ and  $Y$ might be dummy variables.) Removing if necessary at most $n$ of these intervals  we may assume that their total number is $\ell n$ with $\ell\geq 0$. The crucial observation is that 
we have 
\begin{equation}\label{eq:ellneps}
|\abs{W_{s}} - \ell \eps n|\in \Oh(\del n).
\end{equation}
Each interval of length $\eps$ is split in $3n$ intervals of length $10$. By \prref{lem:fareps} an interval of length $\eps$ can have at most $2n$ red positions. 
Thus, in each interval of length $\eps$  there are at least $n$ intervals of length $10$ without any red position. By \prref{lem:len10} each such interval $[i,i+9]$ contains  an interval of length $4$ where all positions are inequivalent. By \prref{lem:fgood} 
we can compress at least one interval in that interval of length $10$. (Note that 
\prref{lem:fgood} provides us with a compression inside  $[i+1,i+8]$. This means, the compression is guaranteed even if we compressed before an interval 
$[i-1,i]$ or $[i+9,i+10]$.) This means that the length of $\eps= 30 n$ is reduced to 
 at most $\eps= 29 n$ during compression. Hence, 
 \begin{equation}\label{eq:fracellneps}
\abs{W_{r'}} \in \frac{29}{30}\ell \eps n +\Oh(\del n).
\end{equation}
Due to (\ref{eq:ellneps}) we conclude $\abs{W_{r'}} \in \abs{W_{s}} +\Oh(\del n)$.
This shows (\ref{eq:pairshr}) and hence, the assertion of the proposition. 
\end{proof}

\begin{remark}\label{rem:constants}
\prref{prop:pairshr} tells us that there is a constant $\kappa_1\in \N$ 
such that $\Abs{W_{r'}} \leq \frac{29 \Abs{W_{r}}}{30} +\kappa_1\del n.$
We content ourselves with a generous bound by letting $\kappa_1 = 97$.
This bound suffices and it is an overestimation, as it can seen by the preceding  proof and by reversing the $\Oh$-notation into concrete constants. The 
value $\kappa_1 = 97$ was chosen such that the later constant $\kappa$ in \prref{cor:allround} is divisible by $100$.
Thus, we can conclude $W_{r'}$ and for every equation $W$ we see on the path
from $E_{r}$ to $E_{r'}$ the following upper bounds. 
\begin{enumerate}
\item If $\Abs{W_{r}} >  30 \kappa_1 \del n$,  then $\Abs{W_{r'}}< \Abs{W_{r}}$ 
 and  $\Abs{W}< \Abs{W_{r}}+ 40 \del n$.
\item If $\Abs{W_{r}} \leq  30 \kappa_1 \del n$,  then 
$\Abs{W}\leq \Abs{W_{r}}+ 40 \del n\leq 2960 \del n$. 
\end{enumerate} 
These estimations are used in next section. 
 \end{remark}

\section{Putting it all together: the overall compression method}\label{sec:cm}
Now we explain what we do if we start the first compression round at 
 the initial state $E_\init$ with a given initial entire \solu
$(\id{A^*},\sig_\init)$.
We begin a first compression round $r$ with $r=0$ and  $E_0= E_\init$ with a given initial entire \solu
$(\alp_0,\sig_0)=(\id{A^*},\sig_\init)$. We end the round after one phase each of \delper  compression and 
 pair compression with a standard state $E_1$ and an entire \solu
$(\alp_1,\sig_1)$ such that $\Abs{E_1, \alp_1,\sig_1}<\Abs{E_0, \alp_0,\sig_0}$. 
We repeat this process by starting the next round $r+1$ with   $E_{r}$ and
$(\alp_{r},\sig_{r})$ and ending the round in $E_{r'}$ and
$(\alp_{r'},\sig_{r'})$. For simplicity of notation we write $r+1=r'$. 
Thus, $$E_{r'}= E_{r+1} = (W_{r+1},B_{r+1},\cX_{r+1},\es,\mu_{r+1}) \text{ and }  (\alp_{r'},\sig_{r'}) = (\alp_{r+1},\sig_{r+1}).$$
We conclude  
\begin{equation}\label{eq:termrou}
\Abs{E_{r+1}, \alp_{r+1},\sig_{r+1}}<\Abs{E_{r}, \alp_{r},\sig_{r}}.
\end{equation}
By (\ref{eq:termrou}) the process terminates: there exists some round $t\geq 0$ and during that round  we reach a final state $\Efin$ without variables and with an \esolu $(\alpfin,\id{C})$. Hence, the entire process defines a path in $\cF$ which is labeled by
some $h_1\cdots h_t\in \End(C^*)$ such that $h_1\cdots h_t(\Wfin) =\Winit.$
We have $\abs\Winit=n$ and therefore $\Abs\Winit\leq 30 \kappa_1 \del n$.
(Note that for large $n\to \infty$ the ratio $\frac{\Abs\Winit}{30 \kappa_1 \del n}$ tends to $0$. For large $n$ the initial size $\Abs\Winit$ is much, much smaller than $30 \kappa_1 \del n$.
By \prref{rem:constants}, for all rounds $r$ with $0\leq r \leq t$  we can state:
\begin{align}\label{eq:kappa2960}
\Abs{W_{r}} \leq 30 \kappa_1 \del n +40 \del n \leq 2960 \del n.
\end{align}
We also need an estimation for the maximal weight of an equation in the middle of each round. \prref{prop:pairshr} says we have to add at most 
$40 \del n$ with respect to the starting point of a round. 
Thus, the conclusion of (\ref{eq:kappa2960}) is therefore: whenever we see an equation  $E = (W,B,\cX,\theta,\mu) $   on the path from $E_\init$ to $\Efin$ we have
\begin{align}\label{eq:kappa3000}
\Abs{W} \leq 2960 \del n +40 \del n \leq  3000 \del n.
\end{align}

\begin{corollary}\label{cor:allround}
Let 
$\kappa = 3000$ and let  $\cB$ the subautomaton of $\cF$ which is defined
defined as follows. The states of $\cB$ are the extended equations
$(W,B,\cX,\theta,\mu)$ where $$\Abs{W} \leq \kappa \del n.$$
Then $\cB$ is a finite and complete \subauto of $\cF$. Let
$\cAcS$ be the trimmed subautomaton of $\cB$, then the NFA $\cAcS$ accepts a rational set of 
$A$-morphisms $L(\cAcS) \sse \End(C^*)$ satisfying the following conditions from \prref{thm:central}
\begin{equation}\label{eq:goodAA}
\cSol(\cS) =\set{(h(d_{1})\lds h(d_{k})) \in C^* \times \cdots \times C^* }{h \in L(\cAcS)}.
\end{equation}
Moreover, $\cSol(\cS)=\es$ \IFF $L(\cAcS)= \es$;  and $\abs{\cSol(\cS)}<\infty$
\IFF $\cAcS$ doesn't contain any directed cycle. 
\end{corollary}

\begin{proof}
The automaton $\cB$ is finite because first, the number of states is finite and second, 
if $E$ is any state in $\cB$, then there are only finitely many $E\arc{h}E'$ \tra{s} in $\cF$  
where $E'\in \cB$. Thus, the out-degree is finite for every state in $\cB$. Since $\cB$ is finite, 
$\cAcS$ is finite, too. The NFAs $\cAcS$ and $\cB$ are both sound by \prref{prop:bf}. They are complete, this follows from  \prref{prop:bf}, since $\kappa = 3000$  is large enough by (\ref{eq:kappa3000}). This shows
$$\cSol(\cS) =\set{(h(d_{1})\lds h(d_{k})) \in C^* \times \cdots \times C^* }{h \in L(\cAcS)}.$$
Finally, \prref{prop:cAsounder} implies that $\cSol(\cS)=\es$ \IFF $L(\cAcS)= \es$ and that $\abs{\cSol(\cS)}<\infty$
\IFF $\cAcS$ doesn't contain any directed cycle. 
\end{proof}

\subsection{The $\NSPACE$ algorithm to compute the trim NFA $\cAcS$.}
The method is standard and is essentially the same as in \cite{jez16jacm,DiekertJP16,CiobanuDiekertElder2016ijac}. Therefore we give a rough sketch only. 
The key is the upper bound in \prref{cor:allround}: it is enough to consider 
states $(W,B,\cX,\theta,\mu)$ where $\Abs{W} \leq 3000 \del n\in \Oh(\abs H \cdot n^2) $. This implies that the maximal length of an equation and the maximal number of $H$-visible letters is in 
 $\Oh(\abs H \cdot n^2)\sse \Oh(\abs H {\Abs \cS}^2)$. This in turn gives the upper bound  $\Oh({\abs H}^2  {\Abs \cS}^2)$ on the alphabet $C$. It is also clear that we need at most    $\Oh({\Abs \cS})$ variables. To each symbol we have to attach its $\mu$-value in the finite monoid $N$. 

 By \prref{sec:mtw} storing a $\mu$-value costs
 $m(\cS)$ bits by (\ref{eq:moncS}).
  As a consequence
 we can specify a state $E$ (and therefore a \tra 
 $E\arc h E'$) in $\cAcS$ with $\Oh(|H|\cdot \Abs{\cS}^2\cdot \log |A|\cdot m(\cS)\cdot \log \Abs{\cS})$ bits. 
 
 Our algorithm must output all  \tra{s}
 $E\arc h E'$  which belong to $\cAcS$. Hence, we consider all candidates 
  $E\arc h E'$ based on the upper bound of bits for their specification one after another in some order, say in some lexicographical order. The algorithm has to decide if it outputs the \tra or whether it moves to the next candidate. 
  Thus, when considering whether or not  $E\arc h E'$ belongs to $\cAcS$, then the algorithm 
 guesses a path of \tra{s} from an initial state to the state $E$ and a path of \tra{s} from $E'$ to a final state. If the guess is successful, then it outputs $E\arc h E'$ and it moves to the next candidate. If unsuccessful, then we apply again the theorem of  Immerman-Szelepcs{\'e}nyi: $\NSPACE(|H| \Abs{\cS}^2\log |A|\, m(\cS)\log \Abs{\cS})$ is closed under complementation. 
Hence, the algorithm ``knows'' whether or not $E\arc h E'$ belongs to $\cAcS$ before moving to the next candidate. 

The proof of \prref{thm:central} is complete, and the first part of the paper is finished.

\section{Part 2: The existential theory with rational constraints for virtually free groups}
It was shown in \cite{DahmaniGui10,LohSen06} that the existential theory with rational constraints in f.g.~virtually free groups is decidable. Our main result (\prref{thm:virtfreestnf}) provides an effective \edtol description for the full set of satisfying assignments to  a Boolean formula in  free variables over equations and  rational constraints. In order to make our statement precise we need some preparation. 

\subsection{NFAs revisited}\label{sec:nfarev}
Let $M$ and $M'$ be finitely generated monoids. In the application $M$ and $M'$ are fixed and not part of the input. Therefore we can
 define 
the size of an NFA over $M$ (resp.~$M'$) which is, up to a constant, independent of the generating set. 

We begin by choosing any finite generating set $\Sig \sse M$. Then we specify an NFA for  $M$  as tuple $\cA=(Q,\Sig,\del,\inI,\finF)$ where 
the set of transitions $\del$ is finite and satisfies $\del\sse Q\times \Sig^* \times Q$. Having this, a natural definition for the \emph{input size of $\cA$} is 
\begin{equation}\label{eq:sizenfa}
\Abs{\cA}_{\text{in},\Sig}= \abs Q + \abs \del +\sum_{(p,u,q)\in \del}\abs u.
\end{equation}
The transitions in $\phi(\cA)$ might be labeled by words of length greater than $1$. However, this can be ``repaired'' easily by replacing
a \tra $(p,b_0\cdots b_k,q)$ with $b_i\in \Gam$ and $k>0$ by a sequence of \tra{s} 
$$(p,b_0,p_1)\lds (p_{k-1},b_{k-1},p_{k}),(p_{k},b_{k},q)$$
were $p_1\lds p_{k-1}$ are fresh states.
The input size of the new automaton is at most twice as large as before. 
The exact size of an NFA $\cA$ is of not important. We let 
\begin{align}\label{eq:insize}
\Absin{\cA}= \Theta(\Abs{\cA}_{\text{in},\Sig}).
\end{align}
This is well-defined, since if  we move to another finite generating set $\Sig'$ for $M$, then we see 
$\Abs{\cA}_{\text{in},\Sig'}\in \Oh(\Abs{\cA}_{\text{in},\Sig})$. 
Thus, it is convenient to denote $\cA$ simply as  $\cA=(Q,M,\del,\inI,\finF)$ because then the interpretation  $L(\cA)\sse M$ is encoded in the syntax. Still, we can use $\Abs{\cA}_{\text{in}}$ up to multiplicative constants.

Let $\phi\colon M\to M'$ be a \hom to a monoid with a finite generating set $\Sig'$, then the NFA  $\phi(\cA)$ is defined as $(Q,M,\phi(\del),\inI,\finF)$
where $\phi(p,a,q)= (p,\phi(a),q)$.
 For $s,t\in Q$ let $L(\cA,s,t)= L(\cA,M,\del,\os s,\os t)$.
If $\abs{\phi(a)}\in \Oh(1)$ for all $a \in \Sig$, then 
there is a  result which is again independent of the choice of $\Sig$ and $\Sig'$:. We have 
\begin{align}\label{eq:NFAtrans}
\Abs{\phi(\cA)}_{\text{in}} \in \Theta(\Abs{\cA}_{\text{in}}) \text{ and } \forall s,t\in Q:\, \phi(L(\cA,s,t))= L(\phi(\cA),s,t)).
\end{align}

\subsection{Exponential expressions}\label{sec:expexp}
The ideas and results in this section are not new. The notion of \emph{exponential expression} was  proposed, for  example, by Plandowski in \cite{pla04jacm}. 
 For the application to $\SL(2,\Z)$  exponential expressions are crucial to show a complexity within  $\PSPACE$.
Intuitively it is more natural to represent strings by allowing  exponents. For example, if $u$ is a word, then it is more natural  to write $u^{100}$ rather than in plain form by repeating $u$ a hundred times
$uuuuuuuuuuuuuuuuuuuuu \cdots$.

Exponential expressions (and plain exponential expressions) over an alphabet $\Sig$ and their sizes are defined inductively as follows.

\begin{enumerate}
\item Every word $w\in \Sig^*$ is a plain exponential expression of size $\Abs w= \abs w$.
\item Every plain exponential expression is an exponential expression.
\item If $E,E'$ are exponential expressions, then the concatenation 
$EE'$ is an  exponential expression of size $\Abs{EE'}= \Abs{E}+\Abs{E'}$.
If $E,E'$ are plain, then $EE'$ is plain, too.
\item  If $E$ is an exponential expression and $k\in \N$, then 
$E^k$ is an exponential expression of size $\Abs{E^k}= 1+ \Abs{E}+\log k$.
\end{enumerate}
Since $\Sig$ is equipped with an \invol, we define for all $k\in \Z$ the expression $E^{-k}$ as a synonym  for ${\ov E}^k$; and we let  $E^0$ denote the empty word $1$. The size of the  expression $E^0$ is still $\Abs E +1$.

In the following we allow that 
an equation appearing in a Boolean formula $\Phi$ is written as $E=E'$ where $E$ and $E'$ are exponential expressions. We view $E$ and $E'$ as words $\Sig^*$ which have a special encoding in a compact form.

\subsection{The existential theory  with constraints and expressions}\label{sec:exthco}
As above $M$ denotes a finitely generated monoid with \invol. 
We let $\Sig \sse M$ be any finite symmetric set of generators: that is, 
 $a\in \Sig \implies \ov a \in \Sig$. Let $\pi\colon \Sig^*\to M$ be the canonical morphism which is induced by the inclusion  $\Sig\sse M$. 
By $\OO$  we denote a countable set of variables such that $M\cap \OO=\es$. 
Without restriction we assume that 
$\OO$ is a set with \invol and $X\neq \ov X$ for all $X\in \OO$. 
As usual, we let $\ov g=\oi g$ for group elements.

The existential theory of $M$ with rational constraints and exponential expressions is defined with the help of Boolean formulae in free variables from $\OO$.
As we did in \prref{sec:mtw}, we obtain more accurate (and therefore better) complexity results if we define the size of a Boolean formula $\Phi$ 
as a pair $(\Abs{\Phi}_\text{eq},\Abs{\Phi}_\text{rat})$. 
The parameter $\Abs{\Phi}_\text{eq}$ behaves as if all NFAs defining the rational constraints were of constant size. Thus, essentially, it adds up the sizes of the equations of the exponential expressions defining the equations. This is reflected by the index ``eq''. 
The parameter $\Abs{\Phi}_\text{rat}$ adds up the input sizes for the NFAs which define the rational constraints. This is reflected by the index ``rat''.

The formal definitions are as follows. Here we assume that every constraint 
 $X\in L$ with   $L\in \Rat(M)$ is given as  $X\in \pi L(\cA)$ (resp.~$X\in L(\cA)$) where $\cA$ is an NFA  as in \prref{sec:nfarev}. Exponential expressions  
 were defined in \prref{sec:expexp}.
\begin{enumerate}
\item Every \emph{atomic formula} is Boolean formula. The atomic formulae are: 
\begin{itemize}
\item The constant $\bot$ (meaning ``false'')\\ $\Abseq{\bot}=\Absrat{\bot}=1$. 
\item Exponential expressions $E=E'$  
 over $(\Sig\cup \OO)^*$. \\ $\Abseq{E=E'} = 1+\Abseq{E} + \Abseq{E'}$
 and $\Absrat{E=E'} = 0$. 
 \item 
Constraints $X\in L(\cA)$. \\ 
$\Abseq{X\in L(\cA)}=1$ and $\Absrat{X\in L(\cA)}=\Absin{\cA}$.
\end{itemize}
\item If $\Phi,\Psi$ are Boolean formulae, then so are 
$(\Phi\vee \Psi)$, 
$(\Phi\wedge \Psi)$, and $(\neg \Phi)$, but  we omit brackets when possible.\\ 
$\Abs{\Phi\vee \Psi}_\star= \Abs{\Phi\wedge \Psi}_\star = \Abs{\Phi}_\star + \Abs{\Psi}_\star$, and 
$\Abs{\neg\Phi}_\star = \Abs{\Phi}_\star$ for $\star\in \os{\text{eq}, \text{rat}}$. 
\end{enumerate}
 
Let $\Phi$ be a Boolean formula and $\sig\colon \OO\to {M}$ be a \morph (that is, a mapping respecting the \invol). Then  the truth value 
$\sig(\Phi)$ is defined in the obvious way. 
If there exists some $\sig$ with $\sig(\Phi)= \text{ true}$, then we say that 
$\Phi$ is \emph{satisfiable}. We also say that  $\sig$ is a \emph{\solu} if $\sig(\Phi)= \text{ true}$ because it solves the satisfiability problem. So, we do not distinguish between satisfying assignments and \solu{s}. 
The \emph{existential (first-order) theory with rational constraints} refers to 
the set of satisfiable Boolean formulae 
$$\exists\FOTh({M},\Rat)= \set{\Phi}{\exists\; \sig\colon \OO\to {M} \text{ such that } \sig(\Phi)= \text{ true}}.$$

We are not only interested to \emph{decide} $\exists\FOTh({M},\Rat)$, what we aim for is an algorithm which produces on input a Boolean formula $\Phi$ an effective description of the \emph{full solution set $\cSol(\Phi, M)$}. To define it properly
we let $\cX_\Phi$ be the set of variables $X$ such that $X$ or $\ov X$ appears in $\Phi$. We let 
\begin{equation}\label{eq:fullsolset}
\cSol(\Phi, M)= \set{\sig\colon \cX_\Phi\to M}{\sig(\Phi)= \text{ true}}.
\end{equation}
Note that $\sig\colon \OO\to M$ satisfies $\Phi$ \IFF its restriction to 
$ \cX_\Phi$ satisfies $\Phi$. It is also clear that every \morph $\sig\colon \cX_\Phi\to M$ satisfying $\Phi$ can be extended to a \morph $\sig\colon \OO \to M$ satisfying $\Phi$. 
If the context of $M$ is clear, we abbreviate $\cSol(\Phi) = \cSol(\Phi, M)$. 
Once we have chosen a presentation 
$\pi\colon S\to M$ where $S$ is finite  and $\pi$ is onto, then we typically represent
elements of $M$ by words over $S$ and a \morph $\sig\colon \OO\to M$ is defined via a mapping $\sig\colon \OO\to S^*$. Moreover, without restriction $\OO$ comes with a linear order. If 
$\os{X_1\lds X_k}$ is the subset of the first $k$ variables with $X_i\leq X_j$ for all $i\leq j$, then we let 
\begin{equation}\label{eq:fullsset}
\cSol_{S,k}(\Phi)= 
\set{(\sig(X_1)\lds \sig(X_k)) \in S^*\times \cdots \times S^*}{\exists \sig:\, \pi\sig(\Phi)= \text{ true}}.
\end{equation}
 Clearly, to decide $\exists\FOTh({M},\Rat)$ is the same as to decide on input $\Phi$ whether or not $\cSol(\Phi)$ is empty. Moreover, $\cSol(\Phi)=\es \iff \cSol_{S,0}(\Phi)=\es$. Note that either $\cSol_{S,0}=\es$ or $\cSol_{S,0}=\os\es$. 
 We will see that $\cSol_{S,k}(\Phi)$ is an effective \edtol relation for every $k$ if $M$ is a \fg virtually free group.

When proving this result  for virtually free groups we make various transformations on NFAs (which up to a constant factor don't change ${\Abs \cA}_\text{in}$) before, eventually, 
we switch to Boolean matrices.

\subsection{Removing exponential expressions in $\Phi$}\label{sec:remexpexp}
Exponential expressions in Boolean formulae as in (\ref{eq:fullsolset}) are  used
because they may reduce the size of $\Phi$ significantly. On the other hand, with the help of more variables we can transform  $\Phi$ into a new formula 
$\Psi$ where all equations are written in plain form as $U=V$. The transformation is not expensive; and it doesn't change the full \solu set: 
\begin{proposition}\label{prop:SLP}
There is a deterministic algorithm working in linear space which takes as input 
a Boolean formula $\Phi$ using exponential expressions.

The output is a formula $\Psi$ having the following properties. 
\begin{enumerate}
\item Equations in $\Psi$ appear in plain form as $U=V$. Hence, 
$\Abs {UV}= \abs {UV}$.
\item $\Abseq {\Psi} \in \Oh(\Abseq {\Phi})$.
\item $\Absrat {\Psi} = \Absrat {\Phi}$.
\item $\cX_\Phi \sse \cX_\Psi$.
\item The restriction $(\sig\colon \cX_\Psi\to M) \mapsto (\sig\colon \cX_\Phi\to M)$ induces a bijection 
$$\cSol(\Psi)=\cSol(\Phi).$$
\end{enumerate}
\end{proposition}

\begin{proof}
 The method is standard: replace all exponential expressions by   straight-line programs (SLPs), see for example \cite{Lohrey2012survey,LohreySpringerbook2014}. More precisely, 
 as soon as an exponential expression $E=T^0$ with $e=0$ appears, replace the expression $E$ by the empty word $1$. 
If an exponential expression $T^1$ appears in $\Phi$, then replace every occurrence of $T^1$ simply by $T$. 
If an exponential expression $E=T^e$ with $e\geq 2$ appears in $\Phi$, then 
define a fresh variable $[T,e]$. (This implicitly means to introduce
 $\ov{[T,e]} = [\ov T,e]$, too. We don't repeat this anymore.)
 Whenever a variable $[T,\ell]$ is introduced where $\ell \geq 2$, then we  introduce another fresh variable  $[T,\floor{\ell/2}]$, too. \Ip $[T,e]$ and $[T,1]$ are introduced (but the condition $\ell \geq 2$  makes sure that  $[T,0]$ is never introduced). 
The  total number of fresh variables $[T,\ell]$ introduced that way is bounded 
 by $2(1+\log  e)\in \Oh(\log e)$.
 
After that step, replace all occurrences of $E$ by $[T,e]$, if $E$ was defined by $E=T^e$ in $\Phi$; and 
for each fresh $[T,\ell]$ with $\ell \geq 2$ introduce a new plain equation 
$$[T,\ell ] =  
\begin{cases}
 [T,\floor{\ell/2}\,]\;[T,\floor{\ell/2}\,]       & \text{ if $\ell$ is even,}\\
  [T,\floor{\ell/2}\,]\;[T,\floor{\ell/2}\,]\;[T,1]     & \text{ otherwise.}   
\end{cases}
$$
Moreover, introduce a single equation 
$[T,1] =T.$ The effect is that each occurrence of $E=T^e$, having size $\Abs E + \Abs T + 2 +\log  (1+e)$, is removed. The gain  of $\Abs T+\log  e$
is mitigated by $\Oh(\log e)$ new equations of constant size and one more equation $[T,1] =T$ of size $\Abs T+2$.

After that step replace $\Phi$ by the conjunction of 
$\Phi$ with the conjunction of the new equations.  Continue until all 
 equations are written in plain form. This defines the formula $\Psi$. 
 Note that is not necessary to add any constraint on the fresh variables $[T,\ell]$. Therefore, $\Absrat {\Psi}=  \Absrat {\Phi}$. The proposition follows. 
  \end{proof}

\section{Virtually free groups}
We restrict ourselves to the non-uniform complexity where the  given virtually free group is not part of the input. 
The restriction allows us to ignore the way a virtually free group is given to us. For example whether the group is given by a context-free grammar for the word problem or whether it is given as a fundamental group of a finite graph of finite groups may result in uniform complexities which differ exponentially. We refer the interested reader to the arXiv version of \cite{SenizerguesW18} for more details. See also \prref{rem:sizeGam}.

In the following  $G$ denotes a fixed finitely generated virtually free group.
Thus, there is a finitely generated free subgroup $F$ such that $H=G/F$ is finite. 
Replacing $F$ by the normal subgroup $\bigcap\set{gF\oi g}{g\in G}$ (which is of finite index in $G$) 
we can assume without restriction that $F$ is normal and that $H$ is a  finite group.   
That is, we start with some surjective \hom $\gam\colon G\to H$ where $H$ is finite 
and the kernel $\ker(\gam)$ is a \fg free group.  This yields a short exact sequence: 
\begin{equation}\label{eq:shexseq}
1 \to F \arc \iota G \arc \gam H\to 1.
\end{equation}
Choosing a generating set for $F$ and a set of coset representatives  from $H$, we obtain a generating set for $G$. We need generating sets which are closed 
under \invol, so  we are more specific. We use the following definition. 
\begin{definition}\label{def:stansetgen}
Let $G$ be given as in  \prref{eq:shexseq}. 
 We say that a subset $S$  of $G$ is 
a \emph{standard generating set} for $G$ 
  if the following conditions are satisfied.
\begin{itemize}
\item $S$ can be written as a union $A_+\cup A_-\cup H_+ \cup H_-\sse G$. 
\item $A_+$ is a basis for $F$, that is  $F=F(A_+)$.
\item $a\in A_+\iff \oi a\in A_-$ for all $a\in A=A_+\cup A_-$. 
\item 
$\gam$ induces a bijection between 
$H'= H_+\cup \os 1$ and $H$.  
\item $H_-=\set{h\in G}{\oi h \in H_+}$.
\end{itemize}    
\end{definition}
Every  standard generating set  is closed under the \invol with $\ov b = \oi  b\in G$. The three set $A_+$, $A_-$, and $H'$ are pairwise disjoint subsets of $G$. 
There is a bijection between $H_+$ and $H_-$, but perhaps 
$H_+\cap H_-\neq \es$.

Let $\pi_S\colon S^*\to G$ denote the canonical projection.
We say that $\wh w\in S^*$ is in  \emph{standard normal form} if we can write $\wh w=uh$ where  
$u\in A^*$ is a freely reduced word (that is without factors $a\ov a$) and $h\in H'$. By $\stnf_S(G)$ we denote the set of standard normal forms.
For every $w\in S^*$ there is a unique $\stnf_S(w)\in \stnf_S(G)$ such that 
 $w= \stnf_S(w)$ in $G$. The set of freely reduced words over $A$ becomes
 $A^*\cap\stnf_S(G)$; and we let ${\rednf}_A(G)=A^*\cap\stnf_S(G)$. Hence,
\begin{equation}\label{eq:stnf}
{\rednf}_A(G)=\set{{\stnf}_S(w)}{w \in A^*} = \set{w \in A^*}{ w \text{ is freely reduced}}.
\end{equation}

\begin{theorem}\label{thm:virtfreestnf}
Let $G$ be a finitely generated virtually free group. Then \wrt  any short exact sequence as in 
(\ref{eq:shexseq}) there is 
a standard generating set $S$  and an $\NSPACE(\Abs{\Phi}_{\text{eq}}^2 (\Absrat{\Phi}^2 +\log\Abseq{\Phi}))$ algorithm which performs the following task. 
It takes as input a Boolean formula $\Phi$ (according to \prref{sec:exthco})  with $\cX_\Phi= \os{X_1,\ov X_1\lds X_k,\ov X_k}$ such that $X_i$ is the $i$th variable in some fixed chosen linear order on $\OO$. 
The output is an extended alphabet $C$ of size $\Oh(\Abseq{\Phi}^2)$, letters $d_i\in C$ for all $1\leq i \leq k$, and a trim NFA ${\cA}_\Phi$ accepting a rational set of 
$S$-\morphs over $C^*$ such that the \edtol relation 
$$\set{(h(d_1)\lds h(d_k)) \in C^* \times \cdots \times C^* }{h \in L({\cA}_\Phi)}$$ is equal to the full solution set in standard normal forms 
$${\cSol}_{S,k}(\Phi)= \set{(\sig(X_1)\lds \sig(X_k))\in {\stnf}_S(G)^k}{\pi_S\sig(\Phi)= \text{ true}}.$$
Moreover, ${\cSol}(\Phi)=\es$ \IFF $L({\cA}_\Phi)= \es$;  and $\abs{{\cSol}(\Phi)}<\infty$  
\IFF ${\cA}_\Phi$ doesn't contain any directed cycle. 
\end{theorem}
\begin{remark}\label{rem:onepar}
In a simplified analysis using a single parameter, the natural choice is to  define $\Abs{\Phi}= \Abseq{\Phi} +\Absrat{\Phi}$.
This yields
\begin{equation}\label{eq:op1}
\NSPACE(\Abseq{\Phi}^2 (\Absrat{\Phi}^2 +\log\Abseq{\Phi}))\sse \NSPACE(\Abs{\Phi}^4) \sse \PSPACE.
\end{equation}
If $\Absrat{\Phi}\in \Oh( \sqrt{\log\Abseq{\Phi}})$, then we have
\begin{equation}\label{eq:op2}
\NSPACE(\Abseq{\Phi}^2 (\Absrat{\Phi}^2 +\log\Abseq{\Phi}))\sse 
\NSPACE(\Abs{\Phi}^2 \log\Abs{\Phi}).
\end{equation}
\end{remark}

\begin{remark}\label{rem:whykay}
Let us comment why we consider only a subset of the first $k$ variables of $\cX_+$ rather than all variables. The reason is that during the proof we manipulate $\Phi$ in various ways including some which introduce fresh variables. But these new variables are just auxiliary symbols, and we make sure that they don't enlarge  the full \solu set. If we introduce fresh variables, then we put them in the linear order behind the first $k$ variables.  Therefore, there is no risk to denote ${\cSol}(\Phi)$ as ${\cSol}_{S,k}(\Phi)$. 
\end{remark}

\subsection{Proof of \prref{thm:virtfreestnf}, Phase 1}\label{sec:prvf1}
Using \prref{prop:SLP} we may assume that all equations in $\Phi$ are written in plain form as $U=V$ where $U,V\in (S \cup \cX)^*$. 
In the following we introduce many fresh variables into $\cX$.
The enlarged set is still called $\cX$. Moreover, 
we choose a subset  of \emph{positive} variables $\cX_+$ such that 
$\os{X_1\lds X_k} \sse \cX_+$, $\cX= \cX_+ \cup \set{\ov X}{X\in \cX_+}$, 
 and $X\in \cX_+\iff \ov X \notin \cX_+$.
 
Having this, we push all negations to the atomic formulae using De Morgan's law. This increase the size at most by the number of atomic formulae. 
For each  inequality $\neg (U=V)$ we introduce a  fresh variable $X$ and then 
we replace  $\neg (U=V)$ by  the conjunction $U=VX\wedge  \neg(X\in \os 1)$.
This increases the size by the number of inequalities since the singleton $\os 1$ is accepted by a two-state NFA. Thus, without restriction $\Phi$ doesn't contain any negation and only three types of atomic formulae:
$$U=V,  \quad X\in \pi_S(L(\cA)),\quad \text{and } X\notin \pi_S(L(\cA)) \text{ where $U,V\in S^*$ and } X\in \cX_+.$$
Here, denotes  an NFA of the form 
$\cA=(Q,S,\del,\inI,\finF)$ with $\del\sse Q\times S \times Q$. We may assume   because
 for every NFA $\cA$ there is another NFA $\ov \cA$ of the same size such that $L(\ov \cA)=\ov{L(\cA)}$ (the complement of $L(\cA)$).
 We also write $X\in L(\cA)$ or $ X\notin L(\cA)$ because $S\sse G$ and therefore we can view 
 $L(\cA)$ directly as a rational subset of $G$. 
\begin{lemma}\label{lem:redphase1}
Let $\gam\colon G\to H$ be as above. \Ip $F = \oi\gam(H)$ is  free and $H$ is finite. It is enough to prove \prref{thm:virtfreestnf} under the following assumptions about the input formula $\Phi$.  
\begin{itemize} 
\item $\Phi$ implies $\bigwedge\set{X\in F}{X\in \cX_+}.$ (Note that the syntax $X\in F$ makes sense since the \fg free group $F$ is a rational subset in $G$.)
\item If an NFA $\cA$ appears in $\Phi$, then $L(\cA)\sse F$ where 
$\cA$ is an NFA over $G$ and the transitions are labeled with an arbitrary, but fixed, finite set of generators of $G$. 
\item  $\Phi$ is a conjunction where each 
atomic formula is either an equation in plain form $U=V$ with $UV\in (S \cup \cX)^*$ or $X\in L(\cA)$ or  $X\notin L(\cA)$.
\end{itemize}
\end{lemma}

\begin{remark}\label{rem:consredphase1}
Assume that $\Phi$ satisfies the assumptions of \prref{lem:redphase1}. Then 
\prref{thm:virtfreestnf}  implies that there is standard set of generators $S$ containing a basis 
$A_+$ of $F$ such that 
$${\cSol}_{S,k}(\Phi)= \set{(\sig(X_1)\lds \sig(X_k))\in {\rednf}_A(G)^k}{\pi_S\sig(\Phi)= \text{ true}}.$$
\Ip the full solution set ${\cSol}_{S,k}(\Phi)$ is an \edtol relation over freely reduced words of $A$. 
\end{remark}

The proof of \prref{lem:redphase1} is based on two closure properties:
1. finite unions of \edtol (resp.~rational) languages in a monoid $M$ are \edtol (resp.~rational); and 
2. if $L\sse M$ is an \edtol (resp.~rational) language and $m\in M$, then $Lm$ is \edtol (resp.~rational). The analogous statements hold for \edtol relations. 

\begin{proof}[Proof of \prref{lem:redphase1}]
The difficult part is to show the first and third item  because we have to respect the given space bounds.  The second item is very easy to show, and  we prove it ``on the fly'' when showing the first item. 

Let  $\Phi$ be any input formula for  \prref{thm:virtfreestnf}. We wish to add the constraint $X\in  F$ for all variables. 
This requires the  introduction of fresh variables. More precisely, for each $X\in \cX_+$ and $g\in H'$  we introduce a new variable $X_g$
with  $\ov{X_g} = \ov X_g$; and we construct an NFA $\cA_g$ such that 
 $L(\cA_g) = L(\cA)\ov g$  for each $\cA$.
The NFA  $\cA_g$ is obtained by adding a single new final  state and new 
 \tra{s} from the former final states to the new single one, all of them are labeled by the letter $\ov g$. The size of $\cA_g$ increases by a constant. 
Moreover, each function $\eta\colon \cX_+ \to H$ defines a new formula 
$\Phi'_\eta$ over the variables $X_{\eta(X)}$ as follows: 
every occurrence of $X$ (resp.~$\ov X$) inside an 
equation is replaced by $X_g\ov g$ (resp.~$g \ov X_g$) where $g=\eta(X)$. The length of each equation is at most doubled. 
Every constraint 
$X\in L(\cA)$ (resp.~$X\notin L(\cA)$) is replaced by the 
$X_g\in L(\cA_g)$ (resp.~$X_g\notin L(\cA_g)$).  Recall that $L(\cA_g) = L(\cA)\ov g$.
 Let $\Phi'_\eta$ denote the result of that transformation. Then we let 
\begin{equation}\label{eq:phieta}
\Phi_\eta= \Phi_\eta' \wedge \bigwedge \set{X_{\eta(X)}\in  F}{X\in \cX_+}.
\end{equation}
Note that a constraint $X_g\in  F$ is the same as $\gam(X_g)=1$. Therefore we can use $H$ as a recognizing finite monoid for all $X_g$. Since $H$ is of constant size, the size of $\Phi_\eta$ is in $\Oh(\Abseq{\Phi},\Absrat{\Phi})$. 
All variables in $\Phi_\eta$ are of the form $X_{\eta(X)}$ or  $\ov{X}_{\eta(X)}$. The old variables $X\in \cX$ are still present but not used in any $\Phi_\eta$. Therefore, inside each $\Phi_\eta$ we rename all $X_{\eta(X)}$
by $X$. After that the variables $X_g$ are superfluous: we remove them from
$\cX$. Thus, each $\Phi_\eta$ uses the same set of $\cV_+$ as $\Phi$ did.

Each formula $\Phi_\eta$ is written again in disjunctive normal form 
$\Phi_\eta= \bigvee\set{\Phi_{\eta,j}}{j \in I_\eta}$ where each 
index set  $I_\eta$ has again (at most) exponential in the size of $\Phi$.

Having this, we see that $\Phi$ is equivalent to  the following disjunction
\begin{align}\label{eq:disginH}
\wt\Phi= \bigvee\set{\Phi_{\eta,j}}{\eta\in H^{\cX_+} \wedge j \in I_\eta}.
\end{align}
Note that $\Phi$ and $\wt\Phi$ use the same set $\cX_+$ of positive variables. It is also clear how to transform a \solu $\sig$ for $\Phi$ into a \solu $\wt \sig$ for $\wt \Phi$ and vice versa: 
if $\sig$ solves $\Phi$, then $\wt \sig(X_g) =\sig(X)g$ solves $\wt \Phi$ and 
if $\wt \sig$ solves $\wt \Phi$, then $\sig(X) =\wt \sig(X_g)\ov g$ solves $\Phi$.

Since $\abs H$ is a constant it is easy to see that the  number of disjunctions in 
\prref{eq:disginH} is (at most) exponential in the size of $\Phi$. But it
can also happen that 
the size of $\wt\Phi$ is exponential in the size of $\Phi$, so in general we have  no way to store $\wt \Phi$ within the given space bound. 
What we do instead is to construct NFAs $\cA_{\eta,j}$ for each $\Phi_{\eta,j}$, one after another, such that $L(\cA_{\eta,j})$ defines the \edtol relation of the full \solu set for $\Phi_{\eta,j}$. 

More precisely, suppose we have shown \prref{thm:virtfreestnf} for each $\Phi_{\eta,j}$ which
is a conjunction of constraints and equations. Then indeed, 
for all $(\eta,j)$, one after another, we can output some NFA $\cA_{\eta,j}$ where the transitions are 
 labeled by \Endos over (the same) extended alphabet $C$ such that
 \begin{equation}\label{eq:goodeta}
{\cSol}_{S,k}(\Phi_{\eta,j}) =\set{(h(d_{1})\lds h(d_{{k}})) \in C^* \times \cdots \times C^* }{h \in L(\cA_{\eta,j})}.
\end{equation}
We can also assume that all these NFAs use exactly the same 
set of distinguished letters $\os{d_1 \lds d_k}$. As an output of the overall algorithm we obtain the disjoint union over all these NFAs $\cA_{\eta,j}$. Without restriction
$H' \cup H_- \sse C$, but the elements of $H' \cup H_-$ are not used in any 
NFA so far. Moreover, 
for each $d_i,\ov{d_i}$ we may assume that there are letters $c_i,\ov{c_i}$, again still not used by  any 
$\cA_{\eta,j}$.
We add one more new state and connect this new 
state with all final states in $\cA_{\eta,j}$ via a single transition
labeled by the \Endo $h\in \End(C^*)$ which is defined by $h(c_i)= d_ig$
where $d_i$ corresponds to the variable $X_i\in \cX_+$ and $g=\eta(X_i)\in H'$. 
The new state becomes the single final state of the ``union'' automaton. 

We conclude that it is enough to show \prref{thm:virtfreestnf} for each $\Phi_{\eta,j}$. Since $\Phi_{\eta,j}$ satisfies properties as required by \prref{lem:redphase1}, the lemma follows.\end{proof}

\subsection{Proof of \prref{thm:virtfreestnf}, Phase 2.\\
Embedding into a semi-direct product}\label{sec:prvf2}
Let $E$ be a finite set with \invol. Then $\F(E)$ denotes the group
$\F(E)=E^*/\set{e\ov e = 1}{e\in E}$. If the involution on $E$ is without fixed points, then we can write $E=E_+\cup E_-$ such that $e\in E_+\iff \ov e \in E_-$; and the inclusion $E_+$ into $E$ induces an isomorphism between the free group 
$F(E_+)$ with basis $E_+$ and $\F(E)$. The group $\F({E})$ is called \emph{specular} in \cite{BertheFDDLPRR17}, which means  it is  the free product of a free group with groups of order two.

In the following we use that $G$ is the fundamental group of a finite graph of finite groups \cite{Karrass73}, which 
enables us to reduce questions about equations with rational constraints in $G$ to questions  about twisted word equations with rational constraints.

Suppose that the group $H$  acts on  $E$ via a \morph $H\to \Aut(E)$. Thus, for  $(f,e)\in H\times E$ we have 
$f(e)\in E$ and $f(\ov e) = \ov{f(e)}$. 
We have  $\Aut(E)=\Aut(E^*)\sse \Aut(\F(E))$ and the action of $H$ on 
$E^*$ and $\F(E)$ defines two different (but related) semi-direct products 
${E}^*\rtimes H$  and $\F({E})\rtimes H$. 
The elements of ${E}^*\rtimes H$ (resp.~$\F({E})\rtimes H$) are the pairs 
$(u,f)\in {E}^*\times H$ (resp.~$\F({E})\times H$) and the multiplication is defined by
$$(u,f)\cdot (v,g) = (u f(v), fg).$$
The semi-direct product ${E}^*\rtimes H$ is a monoid with \invol by
$$\ov{(u,f)} = (\oi f(\ov u), \oi f).$$
It is also clear that $\oi{(u,f)} = (\oi f(\oi u), \oi f)$ in the group 
$\F({E})\rtimes H$.

The free monoid ${E}^*$ embeds into ${E}^*\rtimes H$ via $e\mapsto (e,1)$ and the group $H$ embeds into ${E}^*\rtimes H$ via $f\mapsto (1,f)$.
Having this, we obtain:
\begin{align*}
\F({E})\rtimes H&= ({E}^*/\set{e\ov e = 1}{e\in {E}})\rtimes H \\ &= {E}^*\times H/\set{(e,1)(\ov e,1) = (1,1)}{e\in {E}}.
\end{align*}
Since we identify $E$ with $E\times \os 1$ and $H$ with $\os{1} \times H$, we can write:
\begin{align}\label{eq:sdprod1}
\forall a\in E, g\in H: ga\ov g &= (1,g)(a,1)(1,\ov g) = (g(a),1) = g(a). 
\end{align}
Thus, $gx\ov g= g(x)$ for $g\in H$, $x\in E^*$, and  $x\in \F(E)$.
Let ${\Gam}= E\cup H $ and $H\cap E^*=\os1$. Thus, $1$ the identity element in $H$ is identified with the empty word in $E^*$.  It also appears as a letter in $\Gam$. The interpretation $e\in E$ as $(e,1)$ and  $f\in H$ as $(1,f)$ yields  canonical surjective \morph{s}
\begin{align}\label{eq:pressd}
\pi_\Gam\colon {\Gam}^* \to E^*\rtimes H \to \F({E})\rtimes H.
\end{align}
Our proof of \prref{thm:virtfreestnf} relies on  \prref{prop:DW17}.
 \begin{proposition}[\cite{DiekertW17crm}, Sec.~2.4.5]\label{prop:DW17}
Let $G$ be a finitely generated virtually free  group and $\gam\colon G\to H$ be a \hom onto a finite group $H$ such that the kernel $F=\ker(\gam)$ is free. 
Then $G$ embeds into 
a semi-direct product of the form $\F({E})\rtimes H$; and we can construct an injective \hom $\phi\colon G\to \F({E})\rtimes H$ and a partition $E=A\cup T$ into two subalphabets such that 
$F= \set{x\in G}{\phi(x)\in \F(E)}$ is isomorphic to $\F(A)$. Moreover, using that 
isomorphism, we can embed $\F(A)$ into $G$ such that
$\phi(a) \in T^*aT^*$   such that $\phi(a)$ is freely reduced in $E^*$. The embedding of $G$ into $\F({E})\rtimes H$ is also depicted in \prref{fig:embed}.
\end{proposition}
\begin{proof}
 The first assertion is the nontrivial direction in \cite[Cor.~2.4.23]{DiekertW17crm}. The corollary says that $F= \set{x\in G}{\phi(x)\in \F(E)}$ is 
 a free factor of  $\F(E)$. This means that $\F(E)$ is a free product of $\F(A)$ with $\F(T)$. The additional property  $\phi(a) \in T^*AT^*$ for 
all $a\in A$ is a special case of \cite[Prop.~2.4.22]{DiekertW17crm}.
\end{proof}

\begin{figure}
\begin{center}
    \begin{tikzpicture}[scale=0.9, >=latex,shorten > =1pt,auto,initial text={}, every state/.style={minimum size=8mm}, node distance=2cm]
    \node[] (1) {$1$};
    \node[right of=1] (F) {$\F(A)$};
    \node[right of=F] (G) {$G$};
    \node[right of=G] (H) {$H$};
    \node[right of=H] (1r) {$1$};
    \node[below of=1] (12) {$1$};
    \node[right of=12] (F2) {$\F(E)$};
    \node[right of=F2] (G2) {$\F(E) \rtimes H$};
    \node[right of=G2] (H2) {$H$};
    \node[right of=H2] (1r2) {$1$};

    \draw[->] (1) to (F);
    \draw[->] (F) to (G);
    \draw[->] (G) to node[above] {$\gam$} (H);
    \draw[->] (H) to (1r);

    \draw[->] (12) to (F2);
    \draw[->] (F2) to (G2);
    \draw[->] (G2) to (H2);
    \draw[->] (H2) to (1r2);

    \draw[->] (F) to node[right] {$\phi_A$}(F2);
    \draw[->] (G) to node[right] {$\phi$} (G2);
    \draw[->] (H) to node[right] {$\id{H}$} (H2);
    \end{tikzpicture}
   \caption{Embedding of $G$ with $E=A\cup T$ and $\phi_A(a) =\phi(a) \in T^*aT^*$.}
     \label{fig:embed}
\end{center}
\end{figure}

\begin{remark}\label{rem:sizeGam}
As we deal here with non-uniform complexity, we content ourselves to know that the embedding $\phi\colon G\to \F(E) \rtimes H$ can be effectively computed and therefore we can treat $\abs E$ as a constant. But in fact the proof of Lemma~30 in the arXiv version of \cite{SenizerguesW18} shows
\begin{equation}\label{eq:estimE}
\abs E \leq (\abs A + 2\abs H) \cdot \abs H.
\end{equation}
Thus, if $G$ is given to us as the fundamental group of a finite graph of finite groups, then the interested reader could derive uniform complexity bounds from the material presented here. 
\end{remark}

\begin{corollary}\label{cor:DW17}
We use the same notation as in \prref{prop:DW17}. Define a \morph
$\psi$ {}from $\F(E)$ onto $\F(A)$ by $\psi(a) = a$ and $\psi(t) = 1$ 
for $a\in A$ and $t\in E\sm A$. Then $\psi$ maps freely reduced words $\wh w\in \phi(A^*)$ to freely reduced words in $A^*$. 
\end{corollary}

\begin{proof}
The subgroup  $\phi(\F(A))$ of $\F(E)$ is generated by words of the form 
$\phi(v)$ where $v\in A^*$ is freely reduced over $A$. 
Thus, we can write every element in $w\in \phi(\F(A))$ as a word
$$w = \phi(a_1)\cdots \phi(a_m) \in T^*a_1 T^* \cdots T^*a_m T^*$$
such that $a_{i} \neq \ov{a_{i+1}}$ for $1\leq i <m$. 
Now, every freely reduced word  $\wh w$ can be obtained from some word $w$ as above by cancellation of factors $e \ov e$. Since $a_{i} \neq \ov{a_{i+1}}$ for $1\leq i <m$, we obtain 
$$\wh w \in \phi(a_1)\cdots \phi(a_m) \in T^*a_1 T^* \cdots T^*a_m T^*.$$
\Ip $\psi(\wh w) = a_1\cdots a_m$, and $a_1\cdots a_m$ is freely reduced by definition. 
\end{proof}

\begin{remark}\label{rem:BST80}
Let us give a few more comments how  \prref{prop:DW17} and \prref{cor:DW17} are shown in \cite{DiekertW17crm}. Since $G$ is a fundamental group of a finite graph of finite groups, it acts on its Bass-Serre tree $\cT$ without edge inversion \cite{serre80}. As the notation suggests, $\cT$ is indeed a tree: a connected acyclic undirected graph. 
The same is true for the free subgroup $F=\ker(\gam)$ of $G$: it acts on $\cT$ as a  graph \auto without edge inversion.  It follows  that $F$ has trivial intersection with all vertex groups because the vertex groups are finite and embed into $G$, see again \cite{serre80}. So, if the intersection was not trivial, the $G$ would have a finite nontrivial subgroup, but free groups are torsion free. Thus, $F$ acts on $\cG$ without vertex stabilizers and without  without edge inversion.

Now, let $\cG$ be the quotient graph $\cG= F\sm \cT$. The finite group 
$H$ acts on $\cG$: it permutes  the edges and vertices of $\cG$ by respecting the incidence relation. Moreover, $F$ appears as the fundamental group of the finite and connected simplicial graph $\cG$. This can be viewed as  the main structure theorem about groups acting on trees,  \cite[Thm.~13]{serre80}. That is, we can write $F=\pi_1(\cG)$. The point is that we always have two views on fundamental groups of a simplicial graph. The first view is to choose a base point $\star$ and we write
$\pi_1(\cG)= \pi_1(\cG,\star)$ where  $\pi_1(\cG,\star)$ is a set of paths from $\star$ to $\star$. The second view is to choose a spanning tree $T$ of $\cG$ and we realize $\pi_1(\cG)$ as 
$\pi_1(\cG)= \pi_1(\cG,T)$. That is $\pi_1(\cG)= \pi_1(\cG,T)= \F(E)/\set{t=1}{t\in T}$. The isomorphism between $\pi_1(\cG,\star)$ and $\pi_1(\cG,T)$ is induced 
by the inclusion of $\pi_1(\cG,\star)$ into $\F(E)$ followed by the projection 
$\F(E)$ onto the quotient $\pi_1(\cG,T)= \F(E)/\set{t=1}{t\in T}$.

Let ${E}$ be the set of edges in $\cG$. It is (in the sense of  \cite{serre80}) a finite alphabet: a finite set  with \invol without fixed points. 
Thus, we can view $E$ as a disjoint union $E_+\cup E_-$ with $e\in E_+\iff \ov e\in E_-$. As a set we identify $\F(E)=F({E_+})= {E}^*/\set{e\ov e = 1}{e\in {E}}$ with the regular set of reduced words in $E^*$.
Recall that a word is reduced \IFF no factor $e\ov e$  for $e\in E$ appears. 
Since $H$ acts  on the graph $\cG$,  each $g\in H$ acts on $E^*$ via a length preserving \auto{} which respects the \invol. Hence,  $w$ is reduced \IFF $g(w)$ is reduced. 
 \end{remark}

\subsection{Proof of \prref{thm:virtfreestnf}, Phase 3.\\
Transformation of $\Phi$ to $\Psi$}\label{sec:prvf3}
Let $\Phi$ be the input formula for showing  \prref{thm:virtfreestnf}. 
By \prref{lem:redphase1} we may assume that $\Phi= \Phi_{\eta,j}$ where 
$\Phi_{\eta,j}$ appears in \prref{eq:disginH}. Thus, $\Phi$ 
is given as a single conjunction of a special form where every variable is bounded by a constraint 
$X\in \F(A)$. It follows that the choice of $H'$ for the standard generating set doesn't effect
${\cSol}_{S,k}(\Phi)$ from this point on. 
Therefore, we can write 
\begin{equation}\label{eq:S2A}
\pi_A{\cSol}_{A,k}(\Phi)= \pi_A{\cSol}_{S,k}(\Phi)=\pi_S{\cSol}_{S,k}(\Phi)
\end{equation}
Next, we use the embedding of $G$ into 
the semi-direct product $\F(E)\rtimes H$ as given by  \prref{prop:DW17}
and \prref{fig:embed}. We are going to transform 
 $\Phi$ into a formula $\Psi$ over $\F(E)\rtimes H$ such that the inclusion of $G$ into the semi-direct product defines a bijection between
 $\cSol(\Phi)$ and $\cSol(\Psi)$. 

We construct $\Psi$ according to  the following steps.
\begin{enumerate}
\item We extend the embedding $\phi\colon G\to \F(E)\rtimes H$ to an embedding 
 $\phi\star \id{\cX}\colon G\star \cX^*\to (\F(E)\rtimes H)\star \cX^*$ and we replace 
 every equation $U=V$ in $\Phi$ by  $\phi\star \id{\cX}(U)=\phi \star \id{\cX}(V)$. 
 Identifying $E$, $H$, and $\cX$ with subsets of $(\F(E)\rtimes H)\star \cX^*$, we 
see that  $E\cup H\cup \cX$  generates the group 
$(\F(E)\rtimes H)\star \cX^*$. Hence,  every equation $\phi\star \id{\cX}(U)=\phi \star \id{\cX}(V)$ can be written as a plain equation over the alphabet $E\cup H\cup \cX$. As we have defined $\Gam= E\times \os 1 \cup \os 1 \times H = E\cup H$, we have $E\cup H\cup \cX = \Gam \cup \cX$. 

\item We replace every $\cA$ which appears in $\Phi$ by $\cA_1= \phi(\cA)$.
 That is $L(\cA_1)= \phi(L(\cA))$. By assumption we have
 $L(\cA_1)\sse \F(E)$. Hence, $L(\cA_1)$ is a rational subset of the free group $\F(E)$. 
 
Note that $\Abs{\cA_1}\in \Oh(\Abs{\cA})$. 
 Without restriction, we may assume that the transitions in $\cA_1$ are labeled by elements from $\Gam$. 
The property $\Abs{\cA_1}\in \Oh(\Abs{\cA})$ is not effected by that assumption.
Let $\Psi_1$ be the intermediate formula. 
It is clear that $\phi$ induces a bijection 
between $\cSol(\Phi)$ and $\cSol(\Psi_1)$.
\item We transform  each $\cA_1$ appearing 
in $\Psi_1$ into an NFA $\cB$  such that first, $L(\cA_1) = L(\cB) \sse \F(E)$ and second, the \tra{s} $\cB$ use labels from $E\cup \os{1}$, and  third $\Abs{L(\cB)}\in \Oh(\Abs{L(\cA_1)})$.
This  well known by \cite{sen96actainf,Silva02}, but not completely obvious.  In \prref{lem:sen96sil} we give a slightly simplified proof for the special situation 
of semi-direct products. 

 Let $\Psi$ be the corresponding formula. 
Since $L(\cA_1) = L(\cB)$, we have $\cSol(\Psi)=\cSol(\Psi_1)$.
\end{enumerate}
The construction of $\Psi$ is finished. We have 
\begin{equation}\label{eq:OhPhiPsi}
\phi(\cSol(\Phi))= \cSol(\Psi) \text{ and } (\Abseq{\Psi},\Absrat{\Psi})\in \Oh(\Abseq{\Phi},\Absrat{\Phi}). 
\end{equation}

\begin{lemma}[\cite{sen96actainf,Silva02}]\label{lem:sen96sil}
Let $\cA=(Q,\Gam,\del,\inI,\finF)$ be an NFA 
with the property $L(\cA)\sse \F(E)$.
Then there is an NFA $\cB$  such that $L(\cA) = L(\cB)$ where the \tra{s} $\cB$ use labels from $E\cup \os{1}$, and $\Absin{L(\cB)}\in \Oh(\Absin{L(\cA)})$.
\end{lemma}

\begin{proof}
{In the beginning we let $\del$ be any finite subset of  $Q\times \Gam^*\times Q$. By \prref{sec:nfarev}
and perhaps by doubling the size of $\cA$, we may assume that 
$\del\sse Q\times (\Gam\cup \os 1)\times Q$. Since 
$\Gam = E\cup H$ and $1\in H$ we may assume that $\del \sse Q\times H(E \cup \os 1) \times Q$. Thus, the label of every \tra is either an element from $H$ or from $E \cup \os 1$ or a product $ha$ where $h\in H$ and $a\in E$.}
 Moreover, we may assume that $\cA$ is trim. \Ip if we reach a state $p$ when reading a word $u$ from an initial state, then there is a word 
$v$ such that $uv\in L(\cA)$. Now, $L(\cA)\sse \F(E)$.  
We let  $\gam(p)= \gam(u)$. This is well-defined as $\gam(u) \oi{\gam(v)}=1$. 

For every state $p$ with $\gam(p)=g$  we  introduce exactly one more state 
$[p,g]$ and \tra{s} $p \arc {\ov g} [p,g] $ and $[p,g] \arc {g} p.$
This does not change the language accepted, and the NFA is still trim with 
$\gam([p,g])=1$. 
For each outgoing transition $p \arc {ha} q$ with $h\in H$ and $a\in E\cup \os 1$ we have 
$\gam(q)= f=  gh$; and 
there
is some $b\in E\cup \os 1$ such that $bf=fa$ in $G$ and  hence, we add a transition 
$[p,g] \arc {b}[q,f]$ as depicted in \prref{fig:sensil}.
\begin{figure}[h]
\begin{center}
 \begin{tikzpicture}[scale=0.9, >=latex,shorten > =1pt,auto,initial text={}, every state/.style={minimum size=8mm}, node distance=1.5 cm]

       \node[] (F) {$p$};
    \node[right of=F] (G) {$q$};

      \node[below of=F] (F2) {$[p,g]$};
    \node[right of=F2] (G2) {$[q,f]$};

     \draw[->] (F) to node[above] {$h a $}(G);
     
       \draw[->] (F2) to node[above] {$b$} (G2);
    
    \draw[->] (F) to node[right] {$\ov g$}(F2);
    \draw[->] (G2) to node[right] {$f$} (G);
       \end{tikzpicture}
   \caption{The equations $ b f = af$ and $f=gh$ imply $\ov g b f = ha$. }
     \label{fig:sensil}
\end{center}
\end{figure}

This doesn't change the language accepted as $\ov g b f= ha $ in $G$.
The larger NFA still accepts $L$, but the crucial point is that for  
$h_1a_1 \cdots h_ka_k\in L(A)$ we can accept the same element in $G$ by reading just labels from $E\cup \os 1$. This is easy to see by induction on $k$. 
Now, we remove all original states (they are no longer needed)
 and make
$[p,1]$ initial (resp.~final) \IFF $p$ was initial (resp.~final), to obtain the NFA $\cB$. 
By construction, we have 
$\Abs{\cB}_{\text{in},\Gam}\leq 2\Abs{\cA}_{\text{in},\Gam}$. This implies $\Absin{\cB}\in \Oh(\Absin{\cA})$. Recall that $\Absin{\cA}$ is well-defined up to a multiplicative constant only by \prref{eq:insize}. This makes $\Absin{\cA}$ independent of the choice of a finite generating set.  
\end{proof}

\subsection{Proof of \prref{thm:virtfreestnf}, Phase 4.\\
{}From $\Psi$ to $\Psiben$: applying the techniques of Benois}\label{sec:prvf15}
The transformation in this subsection doesn't effect the equations in $\Psi$.
We only change the NFAs $\cB$ such that they accept with every word $w\in E^*$ also the word $\wh w$ which is obtained by canceling all factors 
$e \ov e$. Nevertheless the rational subset $\pi_E(L(\cB))\sse \F(E)$ will not change. The techniques for the transformation is well known 
 by the work of Mich\`ele Benois \cite{ben69}. Therefore we call the new formula 
$\Psiben$. We will see that $\pi_E(\cSol_{E, k}(\Psi))
= \pi_E(\cSol_{E, k}(\Psiben))$.
For convenience of the reader, we explain the transformation in detail. We use notation from string rewriting. 

For  $u,v\in E^*$ we write $u\RAS{}{} v$ if $u=pq$ and $v=pe\ov e q$ 
for some $p,q\in E^*$ and $e\in E$. By $\RAS{*}{}$ we mean the reflexive and transitive closure of $\RAS{}{}$. Clearly, $u\RAS{*}{} v$ implies $\pi_E(u)=\pi_E(v)$. 
Moreover, $\pi_E(u)=\pi_E(v)$ implies $u\RAS{*}{}w\LAS{*}{}v$ for some $w\in E^*$. 

By $\F$ we denote the set of freely reduced words over $E$; and we identify $\F(E)$ with the regular set $\F\sse E^*$. These a words without any factor $e\ov e$ where $e\in E$ or, equivalently, the set of words $u$ such that $u \RAS*{} v$ implies $u=v$. The identification as sets is possible because $\pi_E$ yields a bijection from $\F$ onto $\F(E)$. 

The formula $\Psi$ uses NFAs $\cB$ where the transitions are labeled by letters from $E$ or by the empty word $1$, see \prref{sec:prvf3}. The interpretation so far is that  $L(\cB)$ denotes a rational subset in $\F(E)$. Now, we switch the viewpoint: $L(\cB)$ denotes a regular subset in $E^*$; and we replace all constraints $X\in \pi_E L(\cB)$ 
(resp.~$X\notin \pi_E L(\cB)$) by $X\in L(\cB)$ 
(resp.~$X\notin L(\cB)$). This is nothing but a change of notation, so we call the new formula still $\Psi$. However, we now on we consider the full solution set 
$\cSol_{E, k}(\Psi)$ as a relation over $E^*$. Thus, a \solu $\sig$ is given as 
a \morph $\sig\colon \cX\to E^*$. Recall that the ``actual'' \solu over the group 
$\F(E)\rtimes H$ is therefore given by $\pi_E \sig\colon  \cX\to \F(E)$.

\begin{lemma}[\cite{ben69}]\label{lem:benflood}
Let $\cB=(Q,E,\del; \inI, \finF)$ an NFA which appears in $\Psi$.
Thus, $\del\sse Q\times (E\cup \os 1)\times Q$ and $L(\cB)\sse E^*$.  
Then we can transform $\cB$ 
into an NFA $\cB'=(Q',E,\del'; \inI', \finF')$
such that $\abs{Q'}=\abs Q$, $\del'\sse Q'\times (E\cup \os 1)\times Q'$, 
and 
\begin{equation}\label{eq:benflood}
\pi_E(L(\cB')\cap \F)=\pi_E(L(\cB')) = \pi_E(L(\cB)).
\end{equation}
\end{lemma}

\begin{proof}
 Let $\cB=(Q,E,\del,\inI, \finF)$ an NFA over $E^*$ where 
 $\del \sse Q\times (E\cup \os 1) \times Q$. We run the following while-loop.
 
 While there there are a letter $e\in E$ and states $s,t\in Q$ such that $(s,1,t)\notin \del$ but $e\ov e\in L(\cB,s,t)$ enlarge $\del$ by the $\eps$-\tra $(s,1,t)$. 
 
 The while-loop terminates after at most $\abs{Q}^2$ rounds with the desired  NFA $\cB'$. The number of states is same as before.

 The inclusion $L(\cB') \sse \set{v\in E^*}{u\RAS*{} v\wedge u \in L(\cB)}$ is trivial. The converse  follows by induction on the length of $u$. 
 Moreover, for each $u\in E^*$ there is a (unique) $\wh u\in \F$ such that
 $u\RAS*{} \wh u \in \F$. This shows (\ref{eq:benflood}) and hence the lemma.
\end{proof}

Let us define a new formula $\Psiben$ in two steps: 

\begin{enumerate}
\item Every constraint $X\in \pi_E(L(\cB))$ (resp.~$X\notin \pi_E(L(\cB))$ 
is replaced by $X\in L(\cB')$ 
(resp.~$X\notin L(\cB')$ where $\cB'$ is the NFA constructed in \prref{lem:benflood}.  
Let $\Psi$ be the new formula. 
\item  Define $\Psiben$ by
\begin{equation}\label{eq:frrec}
\Psiben = \Psi \wedge \bigwedge\set{X\in \F}{X\in \cX}.
\end{equation}
\end{enumerate}

\begin{lemma}\label{lem:S2AE}Let $\Phi$ satisfy the properties in \prref{lem:redphase1}. 
Then the embedding $\phi\colon G\to \F(E)\rtimes H$ induces a bijection between 
$\pi_A({\cSol}_{A,k}(\Phi))\sse \F(A)^k$ and \\$\pi_E({\cSol}_{E,k}(\Psiben))\sse \F(E)^k$. Moreover, $(\Abseq{\Psiben},\Absrat{\Psiben})\in \Oh(\Abseq{\Phi},\Absrat{\Phi})$. 
\end{lemma}

\begin{proof}
The proof is immediate by  (\ref{eq:OhPhiPsi}), (\ref{eq:benflood}), and the construction of $\Psiben$ which makes sure that all variables satisfy  the constraint  $X\in \F$. Thus a constraint $X\in L(\cB')$ is equivalent to a constraint $X\in L(\cB')\cap \F$ and a constraint $X\notin L(\cB')$ is equivalent to a constraint $X\in \F\sm L(\cB')$.
\end{proof}

\subsection{Proof of \prref{thm:virtfreestnf}, Phase 5.\\
Switching from NFAs to finite monoids: {}From $\Psiben$ to   $\Psimon$}\label{sec:prvf16}
The goal is to reduce the proof \prref{thm:virtfreestnf} to \prref{thm:central}. This requires that we represent regular constraints by 
recognizing \morph{s}. In the following a \emph{guess} means to run deterministically over all possibilities. That is, there is deterministic  transducer which respects the space bound in \prref{thm:virtfreestnf} and produces all possible outputs one after another. The corresponding \edtol relations are calculated separately and then everything is put together as we did when we split 
$\wt \Phi$ into formulae $\Phi_{\eta,j}$ in \prref{eq:disginH}.

Let $L(\cB_1) \lds L(\cB_\ell)$
 be the list of NFAs which appear in $\Psiben$. We have $\ell \geq 1$ and without restriction $L(\cB_\ell)=\F$. 
According to \prref{ex:redword} there is a \morph $\mu_\ell\colon E^*\to N_\ell$ to  an $H$-monoid $N_\ell$ of size 
$2+\abs E^2 - \abs E$ such that $u\in \F\iff \mu_\ell(u)\in F_\ell$ where 
$F_\ell =\mu_\ell(\F)$. Since $\abs E \in \Oh(1)$, the monoid $N_\ell$
is of constant size. For the other constraints we cannot expect such a small recognizing monoid, and we  use Boolean matrices instead. 
For $1\leq i <\ell$ let $q_i$ be the number of states of the NFA $\cB_i$.
According to \prref{sec:NFA2Bo} and \prref{ex:dgh05IC} in \prref{sec:regiH}
we find for each $1\leq i <\ell$ a \morph to a \morph $\mu_i\colon E^*\to N_i$ to  a monoid with \invol $N_i$ of size 
$4^{q_i^2}$ such that $u\in L(\cB_i) \iff \mu_\ell(u)\in F_i$ where 
$F_i =\mu_i(\F \sm L(\cB_i))$ for a negative constraint and $F_i =\mu_i(L(\cB_i))$ for a positive constraint. Recall that the monoid $N_i$ is a submonoid of $\B^{2n \times 2n}$. Let $N$ be the direct product
$N_0 = N_1 \times \cdots \times N_\ell$. Let $\pi_i\colon  N_0\to N_i$ the canonical projection, 
Then we obtain a single \morph $\mu\colon E^* \to N_0$ such that 
$\mu_i = \pi_i \mu$ for all $1\leq i <\ell$.

Now, for each $X\in \cX$ we guess a value $\nu(X) \in N_0$.
Each time we make a guess  $\nu(X) \in N_0$ we check that it is consistent with the constraints. Thus, for each $0\leq i \leq \ell$ and each $X\in \cX$ 
we do the following. If there there a positive constraint $X\in \cB_i$, then we check 
$\pi_i \nu(X) \in \mu_i(L(\cB_i))$. If there is a negative constraint $X\notin \cB_i$, then  we check 
$\pi_i \nu(X) \notin \mu_i(L(\cB_i))$. 
If the guess is not consistent, then the guess is not successful and 
the corresponding output is empty. 

For a consistent guess $\nu$ we define the following formula
\begin{equation}\label{eq:Psirat}
\Psimonnu= \bigwedge\set{U_j=V_j}{j\in J}\wedge\bigwedge\set{X=\nu(X)}{X\in \cX}.
\end{equation}
Here, $\set{U_j=V_j}{j\in J}$ is the set of equations which appear in the conjunction $\Psiben$. 
By a slight abuse of language we call a conjunction as in \prref{eq:Psirat}
still a Boolean formula. It is clear what we mean 
by a \emph{\solu} of $\Psimon$, it is given by 
\morph $\sig\colon \cX\to E^*$ such that 
\begin{enumerate}
\item $\pi_E\sig(U_j)=\pi_E\sig (V_j)\in \F(E)$ for all $j\in J$.
\item $\mu \sig(X) =\nu(X)$ for all $X\in \cX$.
\end{enumerate}
For an inconsistent guess we let $\Psimonnu=\bot$. 
Using this interpretation we have 
\begin{equation}\label{eq:Psimon}
{\cSol}_{E,k}(\Psiben) = \bigcup\set{{\cSol}_{E,k}(\Psimonnu)}{\nu\in N_0^{\cX}}.
\end{equation}
The  size of the finite monoid $N_0$ is in $2^{\Oh(\Absrat\Phi^2)}$. 
Thus, in general we cannot store  the disjunction over all guesses in  $\PSPACE$. So, we produce the required NFAs for each $\Psimonnu$ again one after another. 
We are approaching our goal prove \prref{thm:virtfreestnf}. For that we use  
the following proposition. 
\begin{proposition}\label{prop:nikola}
Let $\Phi$ satisfy all conditions in \prref{lem:redphase1}.
Then there is an 
 $\NSPACE(\Abseq{\Phi}^2 (\Absrat{\Phi}^2 +\log\Abseq{\Phi}))$ algorithm which performs the following task. 
It takes as input a Boolean formula 
\begin{equation}\label{eq:nikola}
\Psimon=\bigwedge \set{U_j=V_j}{j\in J}\wedge\bigwedge\set{X=\nu(X)}{X\in \cX}
\end{equation}
which appears as $\Psimonnu$ in (\ref{eq:Psirat}), 
The output is an extended alphabet $C$ of size $\Oh({\Abs{\Phi}}^2)$, letters $d_i\in C$ for all $1\leq i \leq k$, and a trim NFA ${\cA}_{\Psimon}$ accepting a rational set of 
$S$-\morphs over $C^*$ such that the \edtol relation 
$$\set{(h(d_1)\lds h(d_k)) \in C^* \times \cdots \times C^* }{h \in L({\cA}_{\Psimon})}$$ is equal to the full solution set in freely reduced words 
$${\cSol}_{E,k}(\Psimon)= \set{(\sig(X_1)\lds \sig(X_k))\in \F^k}{\pi_A\sig(\Psimon)= \text{ true}}.$$
Moreover, ${\cSol}_{E,k}(\Psimon)=\es$ \IFF $L({\cA}_{\Psimon})= \es$;  and $\abs{{\cSol}_{E,k}(\Psimon)}<\infty$  
\IFF ${\cA}_{\Psimon}$ doesn't contain any directed cycle. 
\end{proposition}

\subsection{Proof of \prref{prop:nikola}}\label{sec:prvf160}
In the proof of \prref{prop:nikola}  we wish to apply  \prref{thm:central}.
In order to do so, we define another parameter  $m(\Phi)$ by the following equation. 
\begin{equation}\label{eq:mPhi}
m(\Phi)\cdot  \log(\Abseq{\Phi})=\Absrat{\Phi}^2.
\end{equation}
Recall that $\log(m) \geq 1$ by the definition in \prref{sec:compl}. 
Therefore we have: 
\begin{equation}\label{eq:mPhivsmS}
\NSPACE(\Abseq{\Phi}^2 (\Absrat{\Phi}^2 +\log\Abseq{\Phi})) =
\NSPACE(\Abseq{\Phi}^2 m(\Phi) \log\Abseq{\Phi}).
\end{equation}
Hence, for the proof we can use the space bound $\NSPACE(\Abseq{\Phi}^2 m(\Phi) \log\Abseq{\Phi})$ which is in the form we need 
for  \prref{thm:central}.

 \subsubsection{From $\Psimon$ to  the system  $\cSPhi$}\label{sec:prpr1}

Let  $\Psimon$ be written as in (\ref{eq:nikola}). Then we define 
$\cS'$ to be the following system of equations (without any constraint)
$$ \cS' = \bigwedge\set{U_j=V_j}{j\in J}.$$
Recall that we have defined \morph{s} $\mu\colon E^*\to  N_0$ and $\nu:\cX\to N_0$. 
We join   $\mu$ and $\nu$ to a 
 \morph $\mu_0\colon (E\cup\cX)^*\to  N_0$ 
 by letting $\mu_0(e) = \mu(e)$ for $e\in E$ and $\mu_0(X) = \nu(X)$ for $X\in \cX$. 

The group $H$ acts on $E$, but neither on $\cX$ nor on $N_0$. Therefore we perform two steps. First, we embed the set of variables  $\cX$ into a larger set of  \emph{twisted}  variables 
$$\cY= H\times \cX.$$ In order to have an embedding we identify $Z\in \cX$ with $(1,Z)\in \cY$. 

The group $H$ acts freely without fixed points on $\cY$ by 
$g\cdot (f,X)= (fg,X)$ and $\ov{(f,X)}= (f, \ov X)$. In this way every \morph
$\sig\colon  \cX \to E^*$ extends uniquely to an $H$-\Hmorph
$\sig\colon  \cY^*\to E^*$. 

Second,  we embed $N_0$ into a larger $H$-monoid $N$ as constructed in \prref{sec:ngi}. 
Moreover, using the universal property of the $H$-monoid $N$, we extend 
$\mu_0:(E\cup\cX)^*\to  N_0$ uniquely to a \morph $\mu_N:(E\cup\cY)^*\to  N$ of $H$-monoids by $\mu_N(f,Z) = f(\mu_0(Z))$. 
Twisted variable of the form $(f,Z)$ appear in $\cS'$ only if $f=1$, but formally the set of variables is now $\cY=H\times \cX$ and for each variable 
$Y\in \cY$ the value $\mu_N(Y) \in N$ is defined. The \morph $\mu_N$ is respects the \invol and  the action of $H$, so does every \solu  $\sig\colon  \cY^*\to E^*$.

We define the system $\cSPhi$ by the system $\cS'$ with the  set 
of variables $\cY$ and where for each $Y\in \cY$ there is a constraint $\mu_N(Y) \in N$.

\subsubsection{Triangulation: {}From $\cSPhi$ to $\cStri$ }\label{sec:prvf4}
The ``problem'' with the system  $\cSPhi$ is that equations
$U=V$ are written as words $U,V\in (\Gam\cup \cY)^*$ where 
$\Gam= E\cup H \sse \F(E) \rtimes H$. In a twisted word equation the $U$ and $V$ should be words over $E\cup \cY$. 

Now, let $W\in (\Gam\cup \cY)^*$ be any word. We intend to  move letters $g\in H$ to the right. If $W$ contains a factor $fg$ with $f,g\in H$, then we replace 
$fg$ by the letter $h$ if  $fg=h$ in $H$. Whenever we see a letter $h=1\in H$, then we remove it. If $W$ contains a factor 
 $ga$ where $a\in E$ and $g\in H$, then we replace it  by $bg$ where $b\in E$ corresponds to the letter $g a \ov g\in E$ according to (\ref{eq:sdprod1}). Since $b$ is letter $g$ moves to the right without increasing the length of $W$. 
 The last rule is that  we  replace every factor $g(f,Y)$ with $Y\in \cX$ and $g\in H'$ by $(gf,Y)g$. Again, $g$ moves to the right without increasing the length.
Thanks to this  rule twisted variables other than $(1,Z)$ appear in the equations. 
Thus, every $U=V$ with $U,V\in (\Gam\cup \cY)^*$ can be written 
as $U'f=V'g$ such that $U'V'\in (E\cup \cY)^*$, $f,g\in H$, and $|U'V'|\leq |UV|$. 

Moreover, if $\sig\colon \cY \to E^*$ is any morphism, then $\sig(U) = \sig(U')f$ and 
$\sig(U) = \sig(U')g$. Since $\sig(U')\in E^*$ and $\sig(V')\in E^*$, 
we have  $\pi_E \sig(U)=\pi_E \sig(V)$ only if $f=g$. 
Thus, whenever we find $f\neq g$, then we can stop: there is no \solu. 
On the other hand, for $f=g$ we have $\pi_E \sig(U)=\pi_E \sig(V)\iff \pi_E \sig(U')=\pi_E \sig(V')$. Hence, we can replace 
the equation $U=V$ by  $U'=V'$ and $U'=V'$ is a twisted word equation over
$E$ in twisted variables $\cY= H\times \cX$ and with regular constraints defined 
by an \Hmorph $\cY\to N$. 

Using standard techniques as described in \prref{sec:trisys} we can assume without restriction that all equations are in triangular form. The number of fresh 
(twisted) variables and the increase of the length over all equations is thereby bounded by $\Oh(\Abseq{\Phi})$. The set of variables is still called $\cX$ and the set twisted variables is still called $\cY= H \times \cX$. 
Thus after the modifications above and triangulation we obtain the system of  twisted word equation with regular constraints 
$\cStri$. We have $\phi(\cSol_{A,k}(\Phi)) = \cSol_{E,k}(\cStri)$ and all equations 
 $\cSol_{E,k}(\cStri)$ have the form 
\begin{equation}\label{eq:eqin5G}
(1,Z)=f(x)g(y)
\end{equation}
where $Z\in \cX$ is a variable, $f,g\in H$,  and $x,y\in E\cup \cY$. 
For example, we could have $x= e\in E$ and $y=(h,Y)\in \cY$. Then the equation becomes $(1,Z)=f(e)g((h,Y))= e'(gh,Y)$.
The triangular form is convenient to achieve the property that solutions are in freely reduced words.
\subsubsection{From $\cStri$ to $\cSfin$: solutions in freely reduced words}\label{sec:prvf6}
This subsection mimics \cite{dgh05IC} in the context of virtually free groups. $\cSfin$ will be the ``final'' system in the sequence of transformations. 
Recall that $\F\sse E^*$ denotes the regular subset of freely reduced words. Clearly, if $puq\in \F$ and $f\in H$, then $f(u)\in \F$, too. 
Another crucial observation is that 
for all freely reduced words $x,y,z\in\F$ and $f,g\in H$ we have 
$z=  f(x)g(y)$ in $\F(E)$ \IFF there are freely reduced words $p,q,r$ such that  $$z=pr\quad \wedge \quad x=\oi f(p)q\quad \wedge \quad 
y = (\oi{g}f)(\ov q) \oi{g}(r).$$
According to (\ref{eq:eqin5G}) every equation in the triangular system $\cStri$ has the form  $(1,Z)=  f(x)g(y)$.
For each such equation and each $h\in H$ we introduce six  fresh twisted variables
$$(h,P),(h,\ov P),(h,Q),(h,\ov Q),(h,R),(h,\ov R)$$
After that we replace the equation $(1,Z)=  f(x)g(y)$ by the conjunction of three new equations: 
\begin{equation}\label{eq:cSfin}
(1,Z)=(1,P)(1,R)\quad \wedge \quad x=(f,P)(1,Q)\quad \wedge \quad y= (\oi{g}f,\ov Q)\, (\oi{g},R).
\end{equation}
For simplicity, the new set of twisted variables is still  called  $\cY = H\times \cX$.

We obtain a system $\cSfin$, and  this finishes the construction of the new formula $\cSfin$. 
Let $\sig\colon  \cX\to E^*$ be any \Hmorph such that $\pi_E \sig(1,Z)=  \pi_E \sig(f(x)g(y))$. Then we there is some $\sig':\cX\to \F$ such that 
first, 
$\pi_E \sig = \pi_E \sig'$, and second $\sig'$ solves the three equation in 
(\ref{eq:cSfin}) in freely reduced words. That is $\sig'$ solves the three equation under the constraint $(h,Y) \in \F$ for all $(h,Y)$. 
For the other direction: if $\pi_E \sig$ solves the three equations in $\F(E)$ without any constraint on twisted variables, then there is some 
$\sig'\colon \cX\to \F$ such that $\pi_E \sig'$ solves the equation 
$(1,Z)=  f(x)g(y)$ in $\F(E)$. 
The remaining problem is that our formalism asks to define 
values $\mu_N$ for each new variables. (That is $2/3$ of all variables). 
The only way to do so in the given space bound is to guess the correct value. We can write the equations appearing in $\cSfin$ as 
a system 
\begin{equation}\label{eq:sysNM}
\bigwedge\set{x_j=(f_j,P_j)(g_j,R_j)}{j\in J}
\end{equation}
where $x_j\in E\cup \cX$ and $\mu_N(x_j)$ is already fixed. 
We can read this system as a system of equations over the finite monoid $N_M$. To check whether such systems have a \solu is actually $\PSPACE$ hard, however we don't need $\PSPACE$-hardness. It is enough that within our space bound we can output by guessing-and-checking all possibilities to assign 
$\mu_N$ values to each of the fresh twisted variables. Such an assignment is again a tuple $\nu\in  N_M^{\cX}$. 
Formally we can write a  $\cSfin$ as a  disjunction 
\begin{equation}\label{eq:sysNMnu}
\cSfin= \bigvee\set{\cSfinnu}{\nu\in N_M^{\cX}}.
\end{equation}
Some of the systems $\cSfinnu$ can be empty. If all are empty, then we can stop: $\cSfin$ is not solvable. 


\begin{lemma}\label{lem:Psi22}
Let $\Phi$ satisfy all conditions in \prref{lem:redphase1}.
Then there is an 
 $\NSPACE(\Abseq{\Phi}^2 (\Absrat{\Phi}^2 +\log\Abseq{\Phi}))$ algorithm which performs the following task. 
It takes as input a Boolean formula 
non-empty system $\cSfinnu$ from the  disjunction in  \prref{eq:sysNMnu}.

The output is an extended alphabet $C$ of size $\Oh(\Abseq{\Phi}^2)$
with $E\sse C$, letters $d_i\in C$ for all $1\leq i \leq k$, and a trim NFA ${\cA}_{\cSfinnu}$ accepting a rational set of 
$E$-\morphs over $C^*$ such that the \edtol relation 
$$\set{(h(d_1)\lds h(d_k)) \in C^* \times \cdots \times C^* }{h \in L({\cA}_{\cSfinnu})}$$ is equal to the full solution set in freely reduced words
$${\cSol}_{E,k}(\cSfinnu)= \set{(\sig(X_1)\lds \sig(X_k))\in \F^k}{\pi_E\sig \text{ solves } \cSfinnu}.$$
Moreover, ${\cSol}_{E,k}(\cSfinnu)=\es$ \IFF $L({\cA}_{\cSfinnu})= \es$;  and $\abs{{\cSol}_{E,k}(\cSfinnu)}<\infty$  
\IFF ${\cA}_{\cSfinnu}$ doesn't contain any directed cycle.
\end{lemma}

\begin{proof}
The existence of the NFA ${\cA}_{\cSfinnu}$ with the desired properties is a formal consequence of \prref{thm:central}. For the complexity issues we need an estimation of $m_{\cA_{\cS_{\text{fin},\nu}}}(N_M)$. It is however clear from the construction that we have \[m_{\cA_{\cS_{\text{fin},\nu}}}(N_M)\in \Oh(m(\Phi))\] where 
$m_{\cA_{\cS_{\text{fin},\nu}}}(N_M)$ was defined in  \prref{eq:moncS} and $m(\Phi)$ was defined in \prref{eq:mPhi}. 

 $\NSPACE(\Abseq{\Phi}^2 (\Absrat{\Phi}^2 +\log\Abseq{\Phi})) =
\NSPACE(\Abseq{\Phi}^2 m(\Phi) \log\Abseq{\Phi})$ is due to \prref{eq:mPhivsmS}. Thus the complexity follows again by  \prref{thm:central}.
\end{proof}

 We did various modifications to the input formula $\Phi$ to arrive at a system 
 $\cSfinnu$  mentioned  \prref{lem:Psi22}. Each step on the way from $\Phi$ to $\cSfinnu$ involved a splitting or guessing, which are realized by transducers respecting the space bound. 
 In order to define the NFA ${\cA}_{\Psimon}$ which is needed for \prref{prop:nikola}, 
 we put all the pieces together. Thus, \prref{prop:nikola} is shown.

\begin{corollary}\label{cor:Psi2}
Let $G$ be a finitely generated virtually free group given by  a short exact sequence as in 
(\ref{eq:shexseq}) and let $\phi\colon G\to \F(E)\rtimes H$ the embedding of $G$ into a semi-direct product as in \prref{fig:embed}. 

 Then there is an  $\NSPACE(\Abseq{\Phi}^2 (\Absrat{\Phi}^2 +\log\Abseq{\Phi}))$  algorithm which performs the following task. 
It takes as input a Boolean formula $\Phi$. The output is an extended alphabet $C$ of size $\Oh(\Abseq{\Phi}^2)$
with $E\sse C$, letters $d_i\in C$ for all $1\leq i \leq k$, and a trim NFA ${\cA}_{E,\Phi}$ accepting a rational set of 
$E$-\morphs over $C^*$. The corresponding 
\edtol relation 
$$\cR({\cA}_{E,\Phi}) =\set{(h(d_1)\lds h(d_k)) \in C^* \times \cdots \times C^* }{h \in L({\cA}_{E,\Phi})}$$ 
 satisfies the following properties. 
 \begin{enumerate}
\item We have $\cR({\cA}_{E,\Phi})\sse \F^k$. Thus, each for each $h\in  L({\cA}_{E,\Phi})$ and $1\leq i \leq k$ the word $h(d_i)$ is freely reduced. 
\item We have $\phi(\cSol_{A,k}(\Phi)) = \cR({\cA}_{E,\Phi})$.
\end{enumerate}
\end{corollary}

\begin{proof}
 As above we can  use  the same techniques of splitting and guessing based on  (\ref{eq:OhPhiPsi}),  (\ref{eq:frrec}), and (\ref{eq:Psimon}). Hence it is possible to construct the NFA
 ${\cA}_{E,\Phi}$ by putting exponentially many NFAs of the form ${\cA}_{\Psimon}$
 provided by \prref{prop:nikola}. Again we may use a transducer which satisfies the required space bound since all pieces can be constructed one after another. 
\end{proof}

\subsection{Proof of \prref{thm:virtfreestnf}. {}From the NFA ${\cA}_{E,\Phi}$ back to $\Phi$.}\label{sec:prvf7}

We have $A\sse E$ and in \prref{sec:prvf2} we defined an $A$-\morph $\psi\colon E^*\to A^*$
by $\psi(t)=1$ for all $t\in E\sm A$. Since $\phi(a) \in T^*aT^*$ we see 
$\psi \phi(a)= a$ for all $a\in A$. See the commutative  diagram in \prref{fig:diachase}.
Therefore, the second statement in \prref{cor:Psi2} yields
$$\cSol_{A,k}(\Phi) = \psi(\cR({\cA}_{E,\Phi})).$$
The first statement says that $\cR({\cA}_{E,\Phi})$ is an \edtol relation in freely reduced words over $E$; and \prref{cor:DW17} asserts that $\psi$ maps freely reduced words to freely reduced words over $A$. Using one state more than the NFA  
${\cA}_{E,\Phi}$ (actually a new initial state) and a \tra labeled by $\psi$ from the old initial state to the new one, we obtain the desired  NFA $\cA_\Phi$. Hence, we can realize $\cSol_{A,k}(\Phi)$ as an effective \edtol relation in freely reduced words over $A$. Thus, the projection $\pi_A\colon A^*\to \F(A)$ yields a bijection between 
$\cSol_{A,k}(\Phi)$ and the full \solu set $\pi_A(\cSol_{A,k}(\Phi))\sse \F(A)^k$.
This concludes the proof of \prref{thm:virtfreestnf}. 
\begin{figure}
\begin{center}
    \begin{tikzpicture}[scale=0.9, >=latex,shorten > =1pt,auto,initial text={}, every state/.style={minimum size=8mm}, node distance=2cm]
       \node[] (F) {$A^*$};
    \node[right of=F] (G) {$T^*AT^*$};
    \node[right of=G] (H) {$A^*$};
  
      \node[below of=F] (F2) {$\F(A)$};
    \node[right of=F2] (G2) {$\F(E)$};
    \node[right of=G2] (H2) {$\F(A)$};

     \draw[->] (F) to node[above] {$\phi$}(G);
    \draw[->] (G) to node[above] {$\psi$} (H);
  
       \draw[->] (F2) to node[above] {$\phi$} (G2);
    \draw[->] (G2) to node[above] {$\psi$} (H2);

    \draw[->] (F) to node[right] {$\pi_A$}(F2);
    \draw[->] (G) to node[right] {$\pi_E$} (G2);
    \draw[->] (H) to node[right] {$\pi_A$} (H2);
    \end{tikzpicture}
   \caption{$\psi \phi= \id{A}$ and $\psi$ maps freely reduced words to freely reduced words by \prref{cor:DW17}.}
     \label{fig:diachase}
\end{center}
\end{figure}

\fi
\section{ $\SL(2,\Z)$}\label{sec:sl2z}
 In this section  we apply our results to the perhaps most  prominent example of a (non-free) virtually free 
group: the special linear group $\SL(2,\Z)$ of $2\times 2$ matrices over $\Z$.
It is well known that $\SL(2,\Z)$ is isomorphic to the amalgamated product $\Z/4\Z \star_{\Z/2\Z} \Z/6\Z$. Possible generators  to establish the isomorphism between 
\SLZ and $\Z/6\Z \star_{\Z/2\Z} \Z/4\Z$ are the matrices
$\rho = \vdmatrix0{-1}{1}1 $  and $\tau = \vdmatrix0{1}{-1}0$ of orders $6$ and $4$ respectively. We have $\rho^3=\tau^2 = \vdmatrix {-1}00{-1}$. We also denote the matrices $\vdmatrix {-1}00{-1}$ and $\vdmatrix {1}00{1}$ as $-1$ and $1$ respectively.\footnote{Typical proofs  for $\SL(2,\Z)\cong\Z/4\Z \star_{\Z/2\Z} \Z/6\Z$ use a ``ping-pong-argument'' for  a faithful action 
of the projective linear group $\PSL(2,\Z)=\SL(2,\Z)/\os{\pm 1}$
on $\R\sm \Q$.}
When working with algebraic problems over $\slz$, like solving equations, it is more natural that the constants are just matrices (with entries written as binary numbers) rather than words over a finite generating set. Moreover,  there is no reason to see a sum or a factor of two matrices 
$\vdmatrix{a}{b}{c}{d}$ and $\vdmatrix{a'}{b'}{c'}{d'}$  because we would add or multiply the matrices together. 
For a matrix $M= \vdmatrix abcd$ in $\slz$ we let $\Absone M= \max\os{|a|+|c|,\,|b|+|d|}$; and we define its \emph{binary size} 
$${\Absbin{M}} = \log \Absone M.
$$
Note that $\Absone M$ is the usual matrix \emph{one-norm} of the matrix $\vdmatrix acbd$. 
We use the notion of binary size to define the size of equations and Boolean formulae where constants are matrices. The only difference is that the size of a constant $M$ in $\slz$ is not $1$ as for a  finite generating set, but ${\Absbin{M}}$. We leave it to the reader to define the size of a Boolean formula accordingly. To have a notation for Boolean formulae $\Phi$ as well, we denote the new size by $\Absbin \Phi$.

The aim of this section is to prove the following  result. 
\begin{corollary}\label{cor:sl2z}
There exists a  
generating set $S$ for \SLZ of $21$ letters, and an $\NSPACE(m(\Phi) \Absbin \Phi^2\log\Absbin \Phi)$ algorithm which performs the following task. 
It takes as input a Boolean formula $\Phi$ where the constants are matrices over  
\SLZ (counted in their binary size) and in variables from{} $\cX=\cX_+\cup \cX_-$ such that $X\in \cX_+\iff \ov X\in \cX_-$ and 
$\cX_+=\os{X_1\lds X_k}$, where each variable has size $1$ for simplicity. 
The output is an extended alphabet $C$ of size $\Oh(\Absbin \Phi^2)$, letters $d_i\in C$ for all $1\leq i \leq k$, and a trimmed NFA ${\cA}_\Phi$ accepting a rational set of 
$A$-\morphs over $C^*$ such that the \edtol relation 
$$\set{(h(d_1)\lds h(d_k)) \in C^* \times \cdots \times C^* }{h \in L({\cA}_\Phi)}$$ is equal to the full solution set in standard normal form as given in  \prref{eq:fullsset}
$${\cSol}_{S,k}(\Phi)= \set{(\sig(X_1)\lds \sig(X_k))\in {\stnf}_S(G)^k}{\pi\sig(\Phi)= \text{true}}.$$
Moreover, $\cSol(\Phi)=\es$ \IFF $L(\cA)= \es$;  and $\abs{\cSol(\Phi)}<\infty$  
\IFF $\cA$ doesn't contain any directed cycle. 
\end{corollary}

The proof of  \prref{cor:sl2z} covers the rest of the section. 
In a first part, we make the reduction to the framework of \prref{thm:virtfreestnf} fully explicit. The main message is that a few elementary facts  are enough to  apply \prref{thm:virtfreestnf} to $\slz$ without any reference to Bass-Serre theory \cite{serre80} with the resulting black box \prref{prop:DW17}. In fact, what we use about \SLZ predates the invention of Bass-Serre theory.
For that we reformulate \prref{thm:virtfreestnf} for \SLZ in  \prref{cor:sl2z}.
The point is that we view  \prref{cor:sl2z} directly as a corollary to  \prref{thm:central}. 

In the second part we show that working with matrices doesn't increase the complexity. 
To see  the difference, let $w\in \os{\rho,\rho^{-1},\tau,\tau^{-1}}^*$ be  a word in the (symmetric) set of natural generators with $n=|w|_{\rho^{\pm 1}}= |w|_{\rho}+|w|_{\rho^{-1}}$, and let ${\vdmatrix abcd}$ denote its image in $\slz$, then a straightforward calculation shows $\Absone{\vdmatrix abcd} \leq F_{n+2}$, where $F_{n+2}$ is the $(n+2)$nd Fibonacci number. \Ip
$$\Absbin{\vdmatrix abcd} \leq  |w|_{\rho^{\pm 1}}\leq |w|.$$ 
This means that working with matrices and their binary size doesn't increase the input size \wrt the reduced word lengths over $\os{\rho,\rho^{-1},\tau,\tau^{-1}}^*$. However, an exponential gap between $\Absbin{\vdmatrix abcd}$ and  $|w|_{\rho^{\pm 1}}$ is possible. 
For example, we have $(\tau\rho)^n= \vdmatrix1n01$ and 
$\Absbin{\vdmatrix 1n01}  = \log (n+1).$
It is easy to see that $(\tau\rho)^n$ is the shortest word  in $\os{\rho,\rho^{-1},\tau,\tau^{-1}}^*$ which represents the matrix $\vdmatrix1n01$.
Thus, the matrix representation of (shortest) words can lead to an exponential compression.
However,  in \cite{GurevichS07} Gurevich and Schupp  give an exponential 
representation of a matrix $M$  in \SLZ by words over $\os{\rho,\tau}^*$ where the bit complexity of the exponential 
representation is linear in $\Absbin M$. 
(In an exponential representation exponents over factors are written in binary.)
In order to prove the lemma of Gurevich and Schupp we use the matrices  $L=\vdmatrix10{1}1= \tau\rho^2$ and $U=\vdmatrix1101= \tau\rho$ in \prref{sec:embedsl2z-b}.

\subsection{Explicit embedding of $\SL(2,\Z)$ into a semi-direct product}\label{sec:embedsl2z-a}
Throughout, we use $\SL(2,\Z)\cong G$ and we work with 
$G=\Z/6\Z \star_{\Z/2\Z} \Z/4\Z$ and its quotient $\PSL(2,\Z)\cong G'=\Z/3\Z \star\Z/2\Z$. The group $G'$ is the modular group\footnote{Note that $G'$ is \textbf{not} the derived 
subgroup $[G,G]$.} which is frequently denoted as $\Gam$ in the literature. 
There are natural actions of $G$ and $G'$ (and hence of $\slz$ and $\PSL(2,\Z)$) on the complete bipartite graph $K_{3,2}$. The actions are defined below and the graph $K_{3,2}$ is depicted in \prref{fig:sl2z}.
We give an orientation to  the set of undirected edges
 in $K_{3,2}$ according to that picture\footnote{Actually, $K_{3,2}$ is  the quotient graph of the Bass-Serre tree for $\SL(2,\Z)$ modulo the action by that group. \prref{fig:sl2z} as well as some subsequent calculations appear in \cite{DiekertW17crm}, too.}. 
 Denoting the set of directed edges 
 $\os{a\lds f}$, we obtain an alphabet $E= \os{a,\ov a\lds f,\ov f}$. As usual, an undirected edge is two-element set $\os{y,\ov y}$.) 
 \begin{figure}[h!]
	\begin{center}		
		\begin{tikzpicture}[
		xscale=3.0,
		yscale=0.6,
		inner sep=.5,
		arc/.style={->, >=latex},
		node/.style={circle},
		label/.style={pos=0.5, above=2pt}
		]
		
		
		\node [node] (P1) at (0,4) {$P_1{=}\star$};
		\node [node] (P2) at (0,2) {$P_{{\rho}}$};
		\node [node] (P3) at (0,0) {$P_{{\rho}^2}$};
		\node [node] (R1) at (2,3) {$R_1$};
		\node [node] (R2) at (2,1) {$R_{\tau}$};
		
		
		\draw [arc] (P1) to node [label] {$a$} (R1);
		\draw [arc] (P1) to node [label] {$b$} (R2);
		\draw [arc, bend right=80,dashed,semithick, distance=2.8cm] (R1) to node [label] {$c$} (P2);
		\draw [arc] (P2) to node [label] {$d$} (R2);
		\draw [arc, bend left=70,dashed,semithick, distance=2.2cm] (R1) to node [label] {$f$} (P3);
		\draw [arc] (P3) to node [label] {$e$} (R2);
		
		\end{tikzpicture}
		\caption{The complete bipartite (plane) graph $K_{3,2}$ with a oriented spanning tree  $\os{a,b,d,e}$ and set of directed chords $c$ and~$f$.}\label{fig:sl2z}
			\end{center}
\end{figure}
Let us write $G=\gen \rho \star_{\Z/2\Z} \gen \tau$ and 
$G'=\gen r \star \gen t$
where $\rho^6=r^3=1$ and $\tau^4=t^2=1$. 
 The action of $G$ on the bipartite graph $K_{3,2}$ is as follows.
The generator $\tau$ (resp.~$t$) stabilizes the vertices $P_\alp$ for $\alp\in \os{1,\rho,\rho^2}$, and we let  $\tau {R}_{1}= t {R}_{1}= {R}_{\tau}$ with 
${R}_{\tau^2}= R_{1}$. Thus, $\tau$ and $t$ are \emph{transpositions}. 
The elements $\rho\in G$ and $r\in G'$ are \emph{rotations}. 
Both stabilize the vertices $R_{1}$ and $R_{\tau}$, and we let  
$\rho {P}_\alp= r {P}_\alp={P}_{\rho\alp}$ (with 
${P}_{\rho^3}= P_{1}$). The action of $G'$ is faithful, and its image 
in the $\Aut(K_{3,2})$ is the direct product $\Z/3\Z \times\Z/2\Z$. 
Thus, $H'=\Z/6\Z$ acts on $K_{3,2}$ by identifying the action of  $2\in \Z/6\Z$ with the action of $r$ and by identifying the action of  $3\in \Z/6\Z$ with the action of $t$. 

This leads to surjective \hom{s} $\gam\colon G \to \Z /12\Z$ 
and $\gam'\colon G' \to \Z /6\Z$ by $\gam(\rho)=\gam'(r)=2$ and  $\gam({\tau})=\gam'(t)=3$. Note that $\gam$ is a \hom since $\gam(\rho^3)=6 = \gam(\tau^2)$. Moreover, $\gam$ induces an isomorphism between the kernel of the canonical projection of $G$ to $G'$ (which is the center of $G$ generated by $\tau^2$)  and the  the kernel of the canonical projection of $\Z/12\Z$ to $\Z/6\Z$. Thus, the mapping $\rho\mapsto r$ and $\tau\mapsto t$ induces a canonical \iso between the kernels $\ker(\gam)$ and $\ker(\gam')$ by \prref{fig:cnt}. 

\begin{figure}
\begin{center}
    \begin{tikzpicture}[scale=0.9, >=latex,shorten > =1pt,auto,initial text={}, every state/.style={minimum size=8mm}, 	xscale=2.8, yscale=1.8			
]
\path (2,2.5) node (1tl) {$1$};
\path (3,2.5) node (1tr) {$1$};
\path (1,1.8) node (1zll) {$1$};
\path (4,1.8) node (1zrr) {$1$};
\path (2,1.8) node (1zl) {$\gen{\tau}$};
\path (3,1.8) node (1zr) {$\Z/2\Z$};
\path (1,-0.4) node (bll) {$1$};
\path (0,1) node (1) {$1$};
\path (1,1) node (F) {$[G,G]$};
\path (2,1) node  (G) {$G$};
\path (3,1) node  (H) {$\Z/12\Z$};
\path (4,1) node  (1r) {$1$};
\path (0,0.2) node (12) {$1$};
\path (1,0.2) node (F2) {$[G',G']$};
\path (2,0.2) node (G2) {$G'$};
\path (3,0.2) node (H2) {$\Z/6\Z$};
\path (4,0.2) node (1r2) {$1$};

\path (2,-0.4) node (1bl) {$1$};
\path (3,-0.4) node (1br) {$1$};
\draw[->] (1zll) to (1zl);
\draw[->] (1zr) to (1zrr);
\draw[->] (1tl) to (1zl);
\draw[->] (1zll) to (F);
\draw[->] (F2) to (bll);

\draw[->] (G2) to (1bl);
\draw [->, >=latex] (1zl) -- (1zr) node[midway, above] {=};

\draw[->] (1tr) to (1zr);
\draw[->] (H2) to (1br);
\draw [->, >=latex] (1zl) -- (G) node[midway, above] {};
\draw [->, >=latex] (1zr) -- (H) node[midway, above] {};
\draw[->] (1) to (F);
\draw [->, >=latex] (F) -- (G) node[midway, above] {};
\draw[->] (G) -- (H) node[midway, above]{$\gam$};
\draw[->] (H) to (1r);
\draw[->] (12) to (F2);
\draw [->, >=latex] (G2) -- (H2) node[midway, above] {$\gam'$};
\draw[->] (F2) to (G2);
\draw[->] (H2) to (1r2);
\draw[->, >=latex] (F) -- (F2) node[midway, right] {=};
\draw [->, >=latex] (G) -- (G2) node[midway, right] {can.};
\draw [->, >=latex] (H) -- (H2) node[midway, right] {can.};
\end{tikzpicture}
   \caption{Exact rows and columns with $\gam(\tau) =3$ and $\tau^2=1$ yields 
   the canonical \iso $[G,G]=[G',G']$ on the left.}
     \label{fig:cnt}
\end{center}
\end{figure}

The following proposition is well-known. It is stated in \cite[Lem.~1]{Newman62} (without proof) since, according to the author Morris Newman, \prref{prop:nie48} it is based on a more general result by Jacob Nielsen \cite{nie48}. For the proof of \prref{prop:nie48} one might use the structure of \SLZ as an amalgamated product as well as the well-known fact that the matrices $A'=\vdmatrix 2111$ and  $B'=\vdmatrix 1112$ generate a free subgroup in \SLZ. However, in the spirit of the paper, let us give a purely combinatorial proof of \prref{prop:nie48} using the structure of $G'$ as a free product of $\Z/3\Z$ and $\Z/2\Z$.  
\begin{proposition}\label{prop:nie48}
The kernel of $\gam'$ is the commutator subgroup $[G',G']$ of $G'=\PSL(2,\Z)$ is a free group of rank two, which is generated by the commutators $[t,r]$ and $[t,r^2]$.
\end{proposition}
\begin{proof}
Clearly, $[G',G']\leq \ker(\gam')$. We have 
$[t,r]= trt^{-1}r^{-1}=trtr^2$ and $[t,r^2]= tr^2tr$. Let us show first that 
$\gen{[t,r],\,[t,r^2]}= \gen{trtr^2,\, tr^2tr}$ is a normal subgroup in $G'$. Indeed: 
\begin{enumerate}
\item $t[t,r]t^{-1}= t(trtr^2)t^{-1}= rtr^2t= (trtr^2)^{-1} = [r,t]$.
\item $r^{-1}[t,r]r= r^2(trtr^2)r= r^2trt= [r^2,t]=[t,r^2]^{-1}$.
\item $r[t,r]r^{-1}= r(trtr^2)r^2= rtrtr= rtr (rttr^2) tr= (rtr^2t)(tr^2 tr)= [r,t][t,r^2]$. \label{eq:trtrtr}
\end{enumerate}
This implies that $\gen{[t,r],\,[t,r^2]}$ is the normal subgroup generated by the commutator $[t,r]$. By definition, $G'/[G',G']=G'/\os{[t,r]=1}$. Hence, $\gen{[t,r],\,[t,r^2]}=[G',G']$ is generated by two elements. 

Second, we claim $[G',G']= \ker(\gam')$. We know 
\begin{align*}
 G'/[G',G']=G'/\os{[t,r]=1} &=\os{r,t}^*/\os{r^3=t^2=1, rt=tr}\\ &= \Z/3\Z \times \Z/2\Z = \Z/6\Z.
\end{align*}
Since
$\gam'$ induces a surjective \hom $G'/[G',G']\to \Z/6\Z$, this induced \hom is an \iso.\footnote{More generally, if $K=C_1\star C_2$ is a free product of two cyclic groups, then $[K,K]$ is equal to the kernel of the canonical projection of $K$ to the direct product $C_1\times C_2$.} Hence, the claim.  

The commutator subgroup is henceforth denoted by $F'$. We wish to show that $F'$ is free.
For that step let $r,s,t$ be three letters such that $s=r^{-1}$ in $G'$ and $\Sig=\os{A,\ov A, B, \ov B}$ be four letters. 
Using 
$t^2=1$ we obtain a \hom $\psi':\os{r,s,t}^*\to G'$. 
Let us define the letters $A$ and $B$ as the words
$A=trts$ and $B=tstr$ in $\os{r,s,t}^+$. The aim is to show the restriction of $\psi'$ to $\os{A,B}^*$ defines an \iso $\psi:F(A,B)\to F'$. 
Since $\psi'(A)$ and $\psi'(B)$ generate $F'=[G',G']$, we content ourselves to show that 
a nonempty freely reduced word in $w\in \Sig^+$ is not mapped to $1\in G'$. In $G'$ we use the reduced normal form which is obtained by a confluent and length-reducing rewriting system which replaces $rs$, $sr$, and $t^2$ by $1$ and which replaces $r^2$ by $s$ and  $s^2$ by~$r$. 

We claim that we can detect the last letter of $w\in \Sig^+$ by knowing the  last four letters 
in the reduced normal form of $\psi'(w)$. This is clear for $\abs w=1$. 
Hence, we may assume that $\abs w=k$ with $k\geq 2$ and that the claim is correct
for words of length at most $k-1$. For that we define a finite automaton with 
eight states and transitions which are labeled by the letters $A=trts$, $\ov A= rtst$, 
$B=tstr$, and $\ov B= strt$.
Those transitions where the label differs from the target are given by the following list: 
\begin{multicols}{2}
\begin{enumerate}
\item\label{eq:rtst}\hspace{1cm}$rtst \arc {tstr} trtr$
\smallskip
\item\label{eq:strt}\hspace{1cm}$strt \arc {trts} tsts$
\smallskip
\item\label{eq:trts}\hspace{1cm}$trts \arc {strt} rtrt$
\smallskip
\item\label{eq:tstr}\hspace{1cm}$tstr \arc {rtst} stst$

\item\label{eq:trtr}\hspace{1cm}$trtr \arc {rtst} stst$
\smallskip
\item\label{eq:tsts}\hspace{1cm}$tsts \arc {strt} rtrt$
\smallskip
\item\label{eq:rtrt}\hspace{1cm}$rtrt \arc {trts} tsts$
\smallskip
\item\label{eq:stst}\hspace{1cm}$stst \arc {tstr} trtr$
\end{enumerate}\end{multicols}
We don't define initial or final states. Since we intend to read only freely reduced words, each state has out degree $3$. Hence, there are $24$ transitions but only $8$ of them are listed in the table above.  
The automaton is deterministic. Consider a non-empty freely reduced
word $w=c_1\cdots c_k$ with $c_i\in \Sig$ and $k\geq 1$. 
After reading $c_1$ we are in the corresponding state.  Hence, the claim is correct for $k=1$. For example,
if $c_1=\ov B$, then we are in the state $strt$ which is the 
left-hand side in line (\ref{eq:strt}). For $k\geq 2$ we can assume by induction that the state of the left-hand side in the corresponding represents the last four letters in the reduced normal form of $\psi(c_1\cdots c_{k-1})$.
For example, assume the state is the 
left-hand side in line (\ref{eq:tsts}) which is the right-hand side in line (\ref{eq:rtrt}). This implies $c_{k-1}=A$. Therefore 
$c_k\in \os{A,B,\ov B}$. If $c_k=A$, then reading $w$ leads to the state $trts$. If $c_k=B$, then reading $w$ leads to the state $B=tstr$.
If $c_k=\ov B= strt$, then reading $w$ leads to the state $rtrt$.
Thanks to symmetries, the same type of argument applies in all situations. Since after reading the word $w$ we are in a state where the reduced normal form has length $4$, it is not $1$ in $G'$. Hence, the proposition.
\end{proof} 
\begin{corollary}\label{cor:nie48}
The kernel of $\gam$ is the commutator subgroup $[G,G]$ of $G=\SL(2,\Z)$ is a free group of rank two, generated by the commutators $[\tau,\rho]$ and $[\tau,\rho^2]$.
\end{corollary}
\begin{proof}
We know that the mapping $\rho$ to $r$ and $\tau$ to $t$ defines a \hom{} {}from $G$ to $G'$. This \hom maps 
$[\tau,\rho]$ to $[t,r]$ and $[\tau,\rho^2]$ to $[t,r^2]$ which yields an \iso between $[G,G]$ and $[G',G']$, see \prref{fig:cnt}. Hence, the statement about the commutator subgroup for $G$ follows from \prref{prop:nie48}.
\end{proof}

The action of $H'=\Z /6 \Z$ on $K_{3,2}$ induces a faithful action on the set of directed edges
$E=\os{a,\ov a\lds f,\ov f}$ which respects the \invol. For example:  $\tau(c) = \ov d$, $\tau\rho(a) = d$, and  $\rho(c) = f$ etc.
Therefore, the canonical \hom $H \to H'= \Z/6\Z \leq \Aut(E)$ yields  a semi-direct product $\F(E)\rtimes H$. 
The action of $H$ on $E$ is not faithful: for all $y\in E$ and $m\in \Z$ we have $(\tau\rho)^m(y) =y\iff m\in 6\Z$. 
 In the next step let us show that 
$[G,G]$ is the fundamental group of the graph $K=K_{3,2}$.
Simultaneously, we will derive the desired result that 
$\SL(2,\Z)$ embeds into the semi-direct product $\F(E)\rtimes H$ where $\F(E)=E^*/\set{y\ov y=1}{y\in E}$ 
and $H=\Z /12 \Z$. We choose a spanning tree of $K$ by the solid edges in $K$ according to \prref{fig:sl2z}, 
and we let $\star= P_1$ be a base point in $K$.
Then the fundamental 
group $\pi_1(K,\star)$ (which is, by definition, a subgroup in $\F(E)= F(a,b,c,d,e,f)$)
can be identified with the free group $F(c,f)$ of rank $2$. 
The identification is due to the fact that $c,f$ are the chords for the chosen (directed) spanning tree $T=\os{a,b,d,e}$. Indeed, the  isomorphism $\phi_1\colon F(c,f)\to \pi_1(K,\star)$ is given by 
\begin{align*}
\phi_1(c) = ac d \ov b \quad\text{ and }\quad \phi_1(f) = af e \ov b.
\end{align*}
To see this, say for $c$, just follow the shortest path in $T$ from $\star$ to the source of $c$, traverse the chord $c$ and choose the shortest path in $T$ back to $\star$.
Consider the canonical projection 
$\pr{c,f}: \F(E)\to F(c,f)$ which maps the edges of $T$ to $1$. Then $\pr{c,f} \phi_1$ is the identity on $F(c,f)\leq \F(E)$. 

Define $\psi: \F(c,f)\to [G,G]$ by $\psi(c)=[\tau,\rho]= \tau\rho\tau\rho^2$ and $\psi(f)=[\tau,\rho^2]= \tau\rho^2\tau\rho$. 
It is an \iso by \prref{cor:nie48}.

Finally, guided by $\tau(P_1)= P_1$ and  $\rho(P_1)= P_\rho$  we define a \hom
$\phi\colon G
\to \F(E)\rtimes H$ where
\begin{align}\label{eq:phisemi}
\phi(\tau) = (1,t) \text{ and } \phi({\rho}) = (b\ov d, r).
\end{align}
The \hom $\phi$ is well-defined since 
$$\phi(\tau^2) = (1,t^2) = (1,r^3) = \phi({\rho}^3) \text{ and } (1,t^2)^2 = (1,1).$$
Another direct calculation shows 
$$\phi\psi(c)= \phi(\tau {\rho} \tau {\rho}^2) = (\phi_1(c),1) \text{ and } \phi\psi(f)=\phi(\tau {\rho}^2 \tau {\rho})= (\phi_1(f),1).$$
Thus, the identity $\text{id}_{F(c,f)}$ factorizes as follows: 
$$\text{id}_{F(c,f)}: F(c,f) \arc{\psi}[G,G]\arc{\phi} \F(E)\times \os{1}= \F(E) \arc{\tau} F(c,f).$$

As a consequence, we obtain a commutative diagram \prref{fig:embsl2z} where we let $\SL(2,\Z)=G$. 
Since $\psi$ is bijective,  $\phi\colon \SL(2,\Z) \to \F(E) \rtimes H$ is injective. Hence,
$\phi$ induces an \iso between $[G,G]$ and  the subgroup $\pi_1(K,\star)\leq \F(E)= \F(E)\times \os 1 \leq \F(E) \rtimes H$.  
\begin{figure}
\begin{center}
    \begin{tikzpicture}[scale=0.9, >=latex,shorten > =1pt,auto,initial text={}, every state/.style={minimum size=8mm}, 	xscale=2.8, yscale=1.8			
]
\path (0,1) node (1) {$1$};
\path (1,1) node (F) {$F(c,f)$};
\path (2,1) node  (G) {$\SL(2,\Z)$};
\path (3,1) node  (H) {$\Z/12\Z$};
\path (4,1) node  (1r) {$1$};
\path (0,0) node (12) {$1$};
\path (1,0) node (F2) {$\F(E)$};
\path (2,0) node (G2) {$\F(E) \rtimes \Z/12\Z$};
\path (3,0) node (H2) {$\Z/12\Z$};
\path (4,0) node (1r2) {$1$};
\draw[->] (1) to (F);
\draw [->, >=latex] (F) -- (G) node[midway, above] {$\psi$};
\draw[->] (G) -- (H) node[midway, above]{$\gam$};
\draw[->] (H) to (1r);
\draw[->] (12) to (F2);
\draw[->] (F2) to (G2);
\draw[->] (G2) to (H2);
\draw[->] (H2) to (1r2);
\draw[->, >=latex] (F) -- (F2) node[midway, right] {incl.};
\draw [->, >=latex] (G) -- (G2) node[midway, right] {$\phi$};
\draw [->, >=latex] (H) -- (H2) node[midway, right] {$\id{\Z/12\Z}$};
\end{tikzpicture}
   \caption{Embedding of $\SL(2,\Z)=G$ into a semi-direct product.}
     \label{fig:embsl2z}
\end{center}
\end{figure}

Since $G$ is a finitely generated subgroup, $\phi(G)$  is a rational subset in $\F(E) \rtimes H$. Hence, we can reduce the question about solving equations in $\SL(2,\Z)$ to twisted word equations over 
$\F(E)$ with rational constraints.\footnote{This is the approach of \cite{DahmaniGui10} to solve word equations in virtually free groups, too.}
However, \prref{thm:central} is more  ambitious. In order to apply \prref{thm:central} 
we need \ip an explicit construction of a  set of standard generators. We obtain such a  set $S$ by defining
$S= A_+\cup A_-\cup H_+\cup H_-$  where 
$A_+= \os{c,f} = \os {\tau {\rho} \tau {\rho}^2,\, \tau {\rho}^2 \tau {\rho}}$ and 
$H_+=\os{\rho^1\lds \rho^5,\, \tau, \rho^1\tau\lds \rho^5\tau}$.
We have $H_+\cap H_-= \os{\rho^1\lds \rho^5}$ and $\rho^3$ becomes a self-involuting letter in $S$. 

\begin{remark}\label{rem:notgeonf}
Let $h = \rho\tau$, $h'=\rho$, and $g=\rho^2\tau$ be letters in $H_+$. 
Then the element $h h'\in S^*$ has length $2$. The corresponding element 
in standard normal form is $\ov c f g\in A^*H_+$ which has length $3$. 
This yields a concrete example  showing that the  standard normal forms are not geodesic, in general.
\end{remark}

\subsection{Euclidean matrix calculation}\label{sec:embedsl2z-b}
For the proof of \prref{cor:sl2z} it remains to show that the complexity is not worse than $\NSPACE(\Absbin \Phi^2m(\Phi)\log\Absbin \Phi)$. This is done next. 
We have
$\oi{L}= \rho \tau= \vdmatrix10{-1}1$ and hence, $\oi U L= \rho$. Since $\rho$, $ \tau$ generate \SLZ as a monoid,  we see that $L,U$ generate \SLZ as a group. It is therefore clear that every matrix in \SLZ can be written as a word in $\os{L,\oi L, U, \oi U}^*$, but of course the representation is not unique 
as for example $(\oi U L)^6=1$. 

Let $a_0,a_1\in \N$ with $a_0>a_1>0$. Using the extended Euclidean algorithm for computing  the $\gcd(a_0,a_1)$  we define natural numbers $k_i$ for $0\leq i <g$ and $a_i$  
$0\leq i \leq g+1$ with 
$$a_0>a_1>\cdots >a_{g-1} > a_g = \gcd(a_0,a_1) > a_{g+1}= 0$$ such that for  $i\geq 0$ we have
\begin{equation}\label{eq:euclid}
0\leq a_{i+2}= a_i - k_{i}a_{i+1} < a_{i+1}.
\end{equation}
The sequence finishes with some 
$1\leq g \in \Oh(\log \abs{a_0})$ such that $k_{g-1} a_g= a_{g-1}$ and 
$a_g =\gcd(a_0,a_1)$. The last value is therefore indeed $a_{g+1}=0$.
We say that $(k_0\lds k_{g-1})$ is the \emph{$\gcd$-sequence} defined by  $a_0,a_1$. Note that $(k_0\lds k_{g-1})$ together with $a_g$ uniquely define $(a_0\lds a_{g})$. Note also that $k_i\geq 1$ for $0\leq i <g-1$ and $k_{g-1}= a_{g-1}\geq 2$.

By $\SL(2,\N)$ we mean the following submonoid of \SLZ:
$$\SL(2,\N)= \set{\vdmatrix{a}{b}{c}{d}}{a,b,c,d \in \N \wedge ad-bc=1}.$$
It is a well-known classical fact and not difficult to see that $\SL(2,\N)$ 
is a free monoid with unique basis $\os{U,L}$, see for example \cite[Chap.~8.12]{edam16} or \cite{KarpRabin87} 
for an application to fast randomized pattern matching. 
The following quantitative lemma belongs probably to folklore. 
It can be easily derived from \cite{GurevichS07}, but for lack of a reference for the precise statement we give a proof. 
\begin{lemma}\label{lem:gcd}
Let $M=\vdmatrix{a_0}{a_1}{c_0}{c_1}\in \SL(2,\N)$ with $a_0>a_1>0$ and let $(k_0\lds k_{g-1})$ be the $\gcd$-sequence defined by  $a_0,a_1$.  Then there is a (unique) $c_g\in \N$ such that following assertions hold.
\begin{enumerate}
\item $0< k_0\cdots k_{g-1}\cdot \min\os{1,c_g}< a_0+c_0= \Absone M$.
\item If $g$ is even, 
then 
$$M= L^{c_g}U^{k_{g-1}} L^{k_{g-2}}\cdots U^{k_1} L^{k_{0}}.$$
\item If $g$ is odd, 
then $c_{g}>0$ and
$$M= L^{c_{g}-1}U L^{k_{g}-1} \cdots U^{k_1} L^{k_{0}}.$$
\end{enumerate}
\end{lemma}

\begin{proof}
For the following we don't need the uniqueness of $c_g$. It follows from the fact that $\os{L,U}$ forms a basis for  the free monoid  $\SL(2,\N)$, which in turn follows easily from the present proof. We leave this part to the interested reader. 

Consider a matrix  $M_1=M=\vdmatrix{a_0}{a_1}{c_0}{c_1}\in \SL(2,\N)$ with $a_0>a_1>0$.
Note that  this implies $\gcd(a_0,a_1)=1$. Moreover, $c_0\geq c_1>0$ 
because $a_0c_1=a_1c_0+1$. (The case $c_0= c_1$ is possible only for 
$M=\vdmatrix{a_0}{a_0-1}{1}{1}$.)
 Let us treat the case $a_1=1$ as a special case first. That is: $M= 
\vdmatrix{k_{0}}{1}{c_{0}}{c_{1}}$. 
We obtain $k_0= a_0$ and 
$$M= L^{c_{1}-1}U L^{a_{0}-1}.$$
Moreover, $c_0 = k_0c_1-1$. Since $a_0\geq 2$ we have
$1\leq c_0 < k_0c_1 < a_0 +c_0$.

For the rest of the proof we may assume $g\geq 2$. We let $(k_0\lds k_{g-1})$ (and $(a_0\lds a_{g-1},1)$) be the $\gcd$-sequences defined by  $a_0,a_1$. Next, we define matrices $M_i$ for $1\leq i \leq g$ according to the
following rules. 
\begin{enumerate}
\item If $1\leq i <g$ and $i$ is odd and $M_{i}= \vdmatrix{a_{i-1}}{a_{i}}{c_{i-1}}{c_{i}}$ is defined, then we let 
$$M_{i+1} = M_{i}L^{-k_{i-1}}= \vdmatrix{a_{i+1}}{a_{i}}{c_{i+1}}{c_{i}}.$$
\item If $1\leq i <g$ and $i$ is even and $M_{i}= \vdmatrix{a_{i}}{a_{i-1}}{c_{i}}{c_{i-1}}$ is defined, then we let 
$$M_{i+1} = M_{i}U^{-k_{i-1}}= \vdmatrix{a_{i}}{a_{i+1}}{c_{i}}{c_{i+1}}.$$
\end{enumerate}
It follows by induction that $M_i\in \SL(2,\N)$ for all $1\leq i \leq g$.
Having this we can deduce, again by induction, for all $1\leq i \leq g$:
\begin{align}\label{eq:euclida}
0 &< k_{i-1} a_{i}\leq a_{i-1}\\
\label{eq:euclidaa}
0&<k_0\cdots k_{i-1}a_i\leq a_0 
\end{align}
The situation for the $c_i$ is slightly different. For for all $1\leq i \leq g-1$
\begin{align}\label{eq:euclidac}
0 &<k_{i-1} c_{i}\leq c_{i-1}\\
\label{eq:euclidacc}
0&< k_0\cdots k_{i-1} c_{i} \leq c_0\\
\label{eq:euclidaccc}
0&\leq k_0\cdots k_{g-2}\cdot \max\os{c_{g-1},\, k_{g-1}c_{g} -1} \leq c_0
\end{align}
To see (\ref{eq:euclidaccc}) we observe that $1\leq c_{g-1} = k_{g-1}c_{g} \pm 1$.
Hence, we can use (\ref{eq:euclidacc})  to conclude (\ref{eq:euclidaccc}).
Considering $i=g-1$ shows the first claim in the lemma, because 
(\ref{eq:euclidaccc}) implies 
$k_0\cdots k_{g-1}c_g \leq  c_0+k_0\cdots k_{g-2}$ and  
$k_0\cdots k_{g-2}< k_0\cdots k_{g-1} \leq a_0$ by $k_{g-1}\geq 2$ and  (\ref{eq:euclidaa}).

For the last matrix is $M_g$ and depending on whether $g$ is odd or even, we have
 two options. If $g$ is even we let $D=L$ and $D=U$ otherwise. We obtain:
\begin{align}\label{eq:euclidas}
M_1\cdot L^{-k_0}U^{-k_1} \cdots D^{-k_{g-2}} = M_g= \begin{cases}
 \vdmatrix{a_{g}}{a_{g-1}}{c_{g}}{c_{g-1}} =  \vdmatrix{1}{k_{g-1}}{c_{g}}{c_{g-1}} & \text{ if $g$ is even;}\\
 \vdmatrix{a_{g-1}}{a_{g}}{c_{g-1}}{c_{g}}=  \vdmatrix{k_{g-1}}{1}{c_{g-1}}{c_{g}} & \text{ if $g$ is odd.}   
\end{cases}
\end{align}
 
\noindent \textbf{First case.} Let $g$ be even, hence $M_g= \vdmatrix{1}{k_{g-1}}{c_{g}}{c_{g-1}}$. 
Then we 
have 
$$M_g U^{-k_{g-1}}L^{-c_g} = \vdmatrix{1}{0}{0}{1}.$$
It is possible that $c_g=0$ in the line above. 

\noindent \textbf{Second case.} 
Let $g$ be odd, hence $M_g= \vdmatrix{k_{g-1}}{1}{c_{g-1}}{c_{g}}$. 
Then we 
have 
$$M_g L^{1-k_{g-1}}U^{-1}L^{1-c_{g}} = \vdmatrix{1}{0}{0}{1}.$$
Note that for $g$ odd, we have $c_{g}\geq c_g>0$ and $k_{g-1}= a_{g-1} >a_g=1$. 
Using (\ref{eq:euclidas}) and a case distinction (whether or not $g$ is even)
yields the result.
\end{proof}

\begin{proposition}[Gurevich and Schupp \cite{GurevichS07}]\label{prop:GS07}
Let $M= \vdmatrix abcd \in \slz$ and $m= \max\os{\abs a, \abs b,\abs c, \abs d}$. 
Then there are words $u,v\in \os{\rho,\tau}^*$ and positive integers 
$e_0 \lds e_{\ell}$ with $0\leq {\ell} \in \Oh(\log m)$ such that
\begin{align*}
|uv| &\in \Oh(1),\\
0< e_0\cdots e_{\ell}&  <2m \\
M&= uL^{e_0}U^{e_1}\cdots L^{e_{{g-2}}}U^{e_{\ell-1}}L^{e_{\ell}} v .
\end{align*}
\end{proposition}

\begin{proof}As a preamble let us note that we will be able to enforce $e_0\neq 0 \neq e_\ell$ because  $\oi{L}={\rho\tau}$ is a short word over $\rho$ and $\tau$.

The assertion is trivial for $m=1$. Hence we assume $m\geq 2$. Using short words $u',v'\in \os{\rho,\tau}^*$, we obtain a matrix
$$M'= u'Mv'= \vdmatrix{a_0}{a_1}{c_0}{c_1}$$
with $m= {a_0}> {a_1}>0$ and ${c_0}>{c_1}> 0$.  
Since $m\geq 2$  it is enough to see that we can choose
$u'=\tau^{2+e_0}U^{e_1}\tau^{e_2}$ and $v'= \tau^{e_3}U^{e_4}$ where the exponents $e_j$ are in $\os{0,1}$. We have 
$M'\in  \SL(2,\N)$ and therefore the result follows from \prref{lem:gcd}. 
\end{proof}

\subsubsection*{Proof of \prref{cor:sl2z}}
\prref{prop:GS07} shows that the size of the exponential expression $$uL^{e_0}U^{e_1}\cdots L^{e_{{g-2}}}U^{e_{\ell-1}}L^{e_{\ell}}v$$ is linear in $\Absbin{M}$. 
Thus, we can apply  \prref{cor:Psi2} based on the explicit embedding of $\SL(2,\Z)$ into the semi-direct product as depicted in \prref{fig:embsl2z}.

\section*{Acknowledgments}
The authors are indebted to anonymous reviewers for their careful reading and extremely helpful feedback on the initial submission of this manuscript. We also 
 thank  Armin Wei\ss{} for various suggestions and Igor Potapov for  pointing out  the paper \cite{GurevichS07} of Gurevich and Schupp.

\bibliographystyle{abbrv}
\bibliography{../../../TRACES/traces}

\newcommand{\Ju}{Ju}\newcommand{\Ph}{Ph}\newcommand{\Th}{Th}\newcommand{\Ch}{Ch}\newcommand{\Yu}{Yu}\newcommand{\Zh}{Zh}\newcommand{\St}{St}\newcommand{\curlybraces}[1]{\{#1\}}
\begin{thebibliography}{10}

\bibitem{Asveld1977}
P.~R. Asveld.
\newblock Controlled iteration grammars and full hyper-{AFL}'s.
\newblock {\em Information and Control}, 34(3):248 -- 269, 1977.

\bibitem{ben69}
M.~Benois.
\newblock Parties rationelles du groupe libre.
\newblock {\em C. R. Acad. Sci. Paris, S{\'e}r. A}, 269:1188--1190, 1969.

\bibitem{BertheFDDLPRR17}
V.~Berth{\'{e}}, C.~D. Felice, V.~Delecroix, F.~Dolce, J.~Leroy, D.~Perrin,
  C.~Reutenauer, and G.~Rindone.
\newblock Specular sets.
\newblock {\em Theoretical Computer Science}, 684:3--28, 2017.

\bibitem{BookLS84}
R.~V. Book, T.~J. Long, and A.~L. Selman.
\newblock Quantitative relativizations of complexity classes.
\newblock {\em {SIAM} J. Comput.}, 13:461--487, 1984.

\bibitem{CiobanuDiekertElder2016ijac}
L.~Ciobanu, V.~Diekert, and M.~Elder.
\newblock Solution sets for equations over free groups are {EDT0L} languages.
\newblock {\em International Journal of Algebra and Computation}, 26:843--886,
  2016.
\newblock Conference abstract in ICALP 2015, LNCS 9135 with full version on
  ArXiv e-prints: abs/1502.03426.

\bibitem{CiobanuEicalp2019}
L.~Ciobanu and M.~Elder.
\newblock Solutions sets to systems of equations in hyperbolic groups are
  {EDT0L} in {PSPACE}.
\newblock In C.~Baier, I.~Chatzigiannakis, P.~Flocchini, and S.~Leonardi,
  editors, {\em 46th International Colloquium on Automata, Languages, and
  Programming (ICALP 2019)}, volume 132 of {\em Leibniz International
  Proceedings in Informatics (LIPIcs)}, pages 110:1--110:15, Dagstuhl, Germany,
  2019. Schloss Dagstuhl--Leibniz-Zentrum fuer Informatik.

\bibitem{DahmaniGui10}
F.~Dahmani and V.~Guirardel.
\newblock Foliations for solving equations in groups: free, virtually free and
  hyperbolic groups.
\newblock {\em J. of Topology}, 3:343--404, 2010.

\bibitem{die98lothaire}
V.~Diekert.
\newblock Makanin's {A}lgorithm.
\newblock In M.~Lothaire, editor, {\em Algebraic Combinatorics on Words},
  volume~90 of {\em Encyclopedia of Mathematics and Its Applications},
  chapter~12, pages 387--442. Cambridge University Press, 2002.

\bibitem{DiekertE17icalp}
V.~Diekert and M.~Elder.
\newblock Solutions of twisted word equations, {EDT0L} languages, and
  context-free groups.
\newblock In I.~Chatzigiannakis, P.~Indyk, F.~Kuhn, and A.~Muscholl, editors,
  {\em 44th International Colloquium on Automata, Languages, and Programming
  (ICALP 2017)}, volume~80 of {\em Leibniz International Proceedings in
  Informatics (LIPIcs)}, pages 96:1--96:14, Dagstuhl, Germany, 2017. Schloss
  Dagstuhl--Leibniz-Zentrum f{\"u}r Informatik.

\bibitem{dgh05IC}
V.~Diekert, C.~Guti{\'e}rrez, and {\Ch}.~Hagenah.
\newblock The existential theory of equations with rational constraints in free
  groups is {PSPACE}-complete.
\newblock {\em Information and Computation}, 202:105--140, 2005.
\newblock Conference version in STACS 2001, LNCS 2010, 170--182, 2001.

\bibitem{DiekertJP16}
V.~Diekert, A.~Je\.{z}, and W.~Plandowski.
\newblock Finding all solutions of equations in free groups and monoids with
  involution.
\newblock {\em Information and Computation}, 251:263--286, 2016.
\newblock Conference version in Proc.~{CSR} 2014, LNCS 8476 (2014).

\bibitem{edam16}
V.~Diekert, M.~Kufleitner, G.~Rosenberger, and U.~Hertrampf.
\newblock {\em Discrete Algebraic Methods. Arithmetic, Cryptography, Automata
  and Groups}.
\newblock Walter de Gruyter, 2016.

\bibitem{DiekertW17crm}
V.~Diekert and A.~Wei{\ss}.
\newblock Context-{F}ree {G}roups and {B}ass-{S}erre {T}heory.
\newblock In J.~Gonz{\'a}lez-Meneses, M.~Lustig, and E.~Ventura, editors, {\em
  Algorithmic and Geometric Topics Around Free Groups and Automorphisms},
  Advanced Courses in Mathematics - CRM Barcelona. Birkh{\"a}user, Basel,
  Switzerland, 2017.

\bibitem{Dunwoody85}
M.~J. Dunwoody.
\newblock The accessibility of finitely presented groups.
\newblock {\em Inventiones Mathematicae}, 81(3):449--457, 1985.

\bibitem{EhrRoz77}
A.~Ehrenfeucht and G.~Rozenberg.
\newblock On some context free languages that are not deterministic {ET0L}
  languages.
\newblock {\em RAIRO Theor. Inform. Appl.}, 11:273--291, 1977.

\bibitem{eil74}
S.~Eilenberg.
\newblock {\em Automata, Languages, and Machines}, volume~A.
\newblock Academic Press, New York and London, 1974.

\bibitem{FerteMarinSenizerguesTocs14}
J.~Fert{\'e}, N.~Marin, and G.~S{\'e}nizergues.
\newblock Word-mappings of level $2$.
\newblock {\em Theory Comput. Syst.}, 54:111--148, 2014.

\bibitem{GilPC12}
R.~H. Gilman.
\newblock Personal communication, 2012.

\bibitem{GilHHR07}
R.~H. Gilman, S.~Hermiller, D.~F. Holt, and S.~Rees.
\newblock A characterisation of virtually free groups.
\newblock {\em Arch. Math. (Basel)}, 89(4):289--295, 2007.

\bibitem{gro87}
M.~Gromov.
\newblock Hyperbolic groups.
\newblock In S.~M. Gersten, editor, {\em Essays in Group Theory}, number~8 in
  MSRI Publ., pages 75--263. Springer-Verlag, 1987.

\bibitem{GurevichS07}
Y.~Gurevich and P.~Schupp.
\newblock Membership problem for the modular group.
\newblock {\em SIAM J. Comput.}, 37:425--459, 2007.

\bibitem{gut2000stoc}
C.~Guti{\'e}rrez.
\newblock Satisfiability of equations in free groups is in {PSPACE}.
\newblock In {\em Proceedings 32nd Annual ACM Symposium on Theory of Computing,
  STOC'2000}, pages 21--27. ACM Press, 2000.

\bibitem{har78}
M.~A. Harrison.
\newblock {\em Introduction to Formal Language Theory}.
\newblock Addison-Wesley Publishing Company, 1978.

\bibitem{HU}
J.~E. Hopcroft and J.~D. Ullman.
\newblock {\em Introduction to Automata Theory, Languages and Computation}.
\newblock Addison-Wesley, 1979.

\bibitem{JainMS2012Lics}
S.~Jain, A.~Miasnikov, and F.~Stephan.
\newblock The complexity of verbal languages over groups.
\newblock In {\em Proceedings of the 27th Annual {IEEE} Symposium on Logic in
  Computer Science, {LICS} 2012, Dubrovnik, Croatia, June 25-28, 2012}, pages
  405--414. {IEEE} Computer Society, 2012.

\bibitem{jez16jacm}
A.~Je\.{z}.
\newblock Recompression: a simple and powerful technique for word equations.
\newblock {\em Journal of the ACM}, 63(1):4:1--4:51, 2016.

\bibitem{jez17icalp}
A.~Je\.{z}.
\newblock Word equations in nondeterministic linear space.
\newblock In I.~Chatzigiannakis, P.~Indyk, F.~Kuhn, and A.~Muscholl, editors,
  {\em Proc. 44th International Colloquium Automata, Languages and Programming
  (ICALP 2017), Part~{II}, Warsaw, Poland, July 10-14, 2017}, Leibniz
  International Proceedings in Informatics (LIPIcs), Dagstuhl, Germany, 2017.
  Schloss Dagstuhl--Leibniz-Zentrum f{\"u}r Informatik.

\bibitem{KarpRabin87}
R.~M. Karp and M.~O. Rabin.
\newblock Efficient randomized pattern-matching algorithms.
\newblock {\em IBM Journal of Research and Development}, 31:249--260, 1987.

\bibitem{Karrass73}
A.~Karrass, A.~Pietrowski, and D.~Solitar.
\newblock Finite and infinite cyclic extensions of free groups.
\newblock {\em Journal of the Australian Mathematical Society},
  16(04):458--466, 1973.

\bibitem{KMIV06}
O.~{Kharlampovich} and A.~{Myasnikov}.
\newblock Elementary theory of free non-abelian groups.
\newblock {\em J. of Algebra}, 302:451--552, 2006.

\bibitem{koz77}
D.~Kozen.
\newblock Lower bounds for natural proof systems.
\newblock In {\em Proc.~of the 18th Ann.~Symp. on Foundations of Computer
  Science, FOCS'77}, pages 254--266, Providence, Rhode Island, 1977. IEEE
  Computer Society Press.

\bibitem{KuskeL05}
D.~Kuske and M.~Lohrey.
\newblock Logical aspects of {C}ayley-graphs: the group case.
\newblock {\em Ann. Pure Appl. Logic}, 131(1-3):263--286, 2005.

\bibitem{Lohrey2012survey}
M.~Lohrey.
\newblock Algorithmics on {SLP}-compressed strings: {A} survey.
\newblock {\em Groups Complexity Cryptology}, 4:241--299, 2012.

\bibitem{LohreySpringerbook2014}
M.~Lohrey.
\newblock {\em The Compressed Word Problem for Groups}.
\newblock Springer Briefs in Mathematics. Springer, 2014.

\bibitem{LohSen06}
M.~Lohrey and G.~S{\'e}nizergues.
\newblock Theories of {HNN}-extensions and amalgamated products.
\newblock In M.~Bugliesi, B.~Preneel, V.~Sassone, and I.~Wegener, editors, {\em
  ICALP}, volume 4052 of {\em Lecture Notes in Computer Science}, pages
  504--515. Springer, 2006.

\bibitem{mak77}
G.~S. Makanin.
\newblock The problem of solvability of equations in a free semigroup.
\newblock {\em Math. Sbornik}, 103:147--236, 1977.
\newblock English transl. in Math. USSR Sbornik 32 (1977).

\bibitem{mak84}
G.~S. Makanin.
\newblock Decidability of the universal and positive theories of a free group.
\newblock {\em Izv. Akad. Nauk SSSR}, Ser. Mat. 48:735--749, 1984.
\newblock In Russian; English translation in: {\em Math.~USSR Izvestija, 25},
  75--88, 1985.

\bibitem{Kourovka1986}
V.~D. {Mazurov} and E.~I. {Khukhro}, editors.
\newblock {\em {Unsolved Problems in Group Theory. {T}he Kourovka Notebook.
  No.~10 (1986 edtion)}}.
\newblock ArXiv 1401.0300, 2014.

\bibitem{mck64}
J.~D. McKnight.
\newblock {K}leene quotient theorem.
\newblock {\em Pacific Journal of Mathematics}, pages 1343--1352, 1964.

\bibitem{ms83}
D.~E. Muller and P.~E. Schupp.
\newblock Groups, the theory of ends, and context-free languages.
\newblock {\em Journal of Computer and System Sciences}, 26:295--310, 1983.

\bibitem{Newman62}
M.~Newman.
\newblock The structure of some subgroups of the modular group.
\newblock {\em Illinois J. Math.}, 6:480--487, 1962.

\bibitem{nie48}
J.~Nielsen.
\newblock The commutator subgroup of the free product of cyclic groups.
\newblock {\em {Mat. Tidsskr. B}}, pages 49--56, 1948.
\newblock In Danish.

\bibitem{pap94}
{\Ch}.~H. Papadimitriou.
\newblock {\em Computational Complexity}.
\newblock Addison Wesley, 1994.

\bibitem{pla04jacm}
W.~Plandowski.
\newblock Satisfiability of word equations with constants is in {PSPACE}.
\newblock {\em Journal of the ACM}, 51:483--496, 2004.
\newblock Conference version in Proc.~FOCS'99.

\bibitem{pr98icalp}
W.~Plandowski and W.~Rytter.
\newblock Application of {L}empel-{Z}iv encodings to the solution of word
  equations.
\newblock In K.~G. Larsen et~al., editors, {\em Proc. 25th International
  Colloquium Automata, Languages and Programming (ICALP'98), Aalborg (Denmark),
  1998}, volume 1443 of {\em Lecture Notes in Computer Science}, pages
  731--742, Heidelberg, 1998. Springer-Verlag.

\bibitem{raz87}
A.~A. Razborov.
\newblock {\em On Systems of Equations in Free Groups}.
\newblock PhD thesis, Steklov Institute of Mathematics, 1987.
\newblock In Russian.

\bibitem{raz93}
A.~A. Razborov.
\newblock On systems of equations in free groups.
\newblock In {\em Combinatorial and Geometric Group Theory}, pages 269--283.
  Cambridge University Press, 1994.

\bibitem{Rimlinger87a}
F.~Rimlinger.
\newblock Pregroups and {B}ass-{S}erre theory.
\newblock {\em Mem. Amer. Math. Soc.}, 65(361):viii+73, 1987.

\bibitem{rs95}
E.~Rips and Z.~Sela.
\newblock Canonical representatives and equations in hyperbolic groups.
\newblock {\em Inventiones Mathematicae}, 120:489--512, 1995.

\bibitem{RozS86}
G.~Rozenberg and A.~Salomaa.
\newblock {\em The Book of {L}}.
\newblock Springer, 1986.

\bibitem{rs97vol1}
G.~Rozenberg and A.~Salomaa, editors.
\newblock {\em Handbook of Formal Languages}, volume~1.
\newblock Springer, 1997.

\bibitem{sela13}
Z.~Sela.
\newblock Diophantine geometry over groups {VIII}: {S}tability.
\newblock {\em Ann. of Math.}, 177:787--868, 2013.

\bibitem{Selman94}
A.~L. Selman.
\newblock A taxonomy of complexity classes of functions.
\newblock {\em J. Comput. Syst. Sci.}, 48(2):357--381, 1994.

\bibitem{Selman1996}
A.~L. Selman.
\newblock Much ado about functions.
\newblock In {\em Proc.~11th Annual IEEE Conference on Computational
  Complexity}, CCC '96, pages 198--212, Washington, DC, USA, 1996. IEEE
  Computer Society.

\bibitem{sen96dimacs}
G.~S{\'e}nizergues.
\newblock On the finite subgroups of a context-free group.
\newblock In G.~Baumslag, D.~Epstein, R.~Gilman, H.~Short, and C.~Sims,
  editors, {\em Geometric and Computational Perspectives on Infinite Groups},
  number~25 in {DIMACS} series in Discrete Mathematics and Theoretical Computer
  Science, pages 201--212. Amer. Math. Soc., 1996.

\bibitem{sen96actainf}
G.~S{\'e}nizergues.
\newblock On the rational subsets of the free group.
\newblock {\em Acta Informatica}, 33:281--296, 1996.

\bibitem{SenizerguesW18}
G.~S{\'{e}}nizergues and A.~Wei{\ss}.
\newblock The isomorphism problem for finite extensions of free groups is in
  {PSPACE}.
\newblock In I.~Chatzigiannakis, C.~Kaklamanis, D.~Marx, and D.~Sannella,
  editors, {\em 45th International Colloquium on Automata, Languages, and
  Programming, {ICALP} 2018, July 9-13, 2018, Prague, Czech Republic}, volume
  107 of {\em LIPIcs}, pages 139:1--139:14. Schloss Dagstuhl -- Leibniz-Zentrum
  f{\"{u}}r Informatik, 2018.

\bibitem{serre80}
J.-P. Serre.
\newblock {\em Trees}.
\newblock Springer, 1980.
\newblock French original 1977.

\bibitem{Silva02}
P.~V. Silva.
\newblock Recognizable subsets of a group: finite extensions and the abelian
  case.
\newblock {\em Bulletin of the {EATCS}}, 77:195--215, 2002.

\end{thebibliography}

\end{document}